\numberwithin{equation}{section}
\newtheorem{example}{Example}[section]
\newtheorem{definition}{Definition}[section]
\newtheorem{theorem}{Theorem}[section]
\newtheorem{proposition}{Proposition}[section]
\newtheorem{lemma}{Lemma}[section]
\newtheorem{corollary}{Corollary}[section]
\newtheorem{remark}{\textbf{Remark}}[section]
\def\<{\langle}
\def\>{\rangle}
\def\div{{\rm div}}
\begin{document}

\title[ Anisotropic Minkowski Problem]{On an Anisotropic Minkowski Problem}
\thanks{C. X. is supported by China Scholarship Council.}
\author{Chao Xia}\address{Albert-Ludwigs-Universit\"at Freiburg, Mathematisches Institut,  Eckerstr. 1, 79104 Freiburg, Germany}
\email{chao.xia@math.uni-freiburg.de}

\date{March 6, 2012}


\begin{abstract}{ In this paper, we study the anisotropic Minkowski problem. It is a problem of prescribing the anisotropic Gauss-Kronecker curvature for a closed strongly convex hypersurface in Euclidean space as a function on its anisotropic normals in relative or Minkowski geometry. We first formulate such problem to a Monge-Amp\'ere type equation on the anisotropic support function and  then prove the existence and uniqueness of the admissible solution to such equation. In conclusion, we give an affirmative answer to the anisotropic Minkowski problem.}

\

\noindent\textbf{ Keywords:} Minkowski problem, relative geometry, Minkowski geometry, Monge-Amp\'ere equation, Wulff shape. 

\noindent\textbf{2010 AMS subject classification:} 51B20, 35J96, 53C21.

\end{abstract}

\medskip

\maketitle

\section{Introduction}
The Minkowski problem is a well known problem in the classical differential geometry: given a positive function $K$ on $\mathbb{S}^n$, can one find a closed strongly convex hypersurface whose Gauss-Kronecker curvature is given by $K$ as a function on its normals? This problem has been solved by the works of Minkowski \cite{Mi}, Alexandrov \cite{Al}, Lewy \cite{Le}, Nirenberg \cite{Ni}, Pogorelov \cite{Po} and eventually Cheng-Yau \cite{CY}. As is well known, the solvability of the Minkowski problem is  equivalent to that of a Monge-Amp\'ere equation. The analytic method of Nirenberg, Pogorelov and Cheng-Yau to the Minkowski problem led to significant development of the theory of the Monge-Amp\'ere equation. Many generalized problems around convex hypersurfaces with other prescribed curvature functions were considered intensively in recent years, see e.g. \cite{GG} and \cite{GM}. Most of them can be formulated as fully nonlinear elliptic equations. We refer to the lecture note of Guan \cite{Gu} for a complete description of the fully nonlinear elliptic equations arising from geometry, particularly the Minkowski problem.

This paper aims at an analogous investigation on a Minkowski type problem which incorporates some anisotropy, which we will call the anisotropic Minkowski problem. Such anisotropy was considered in two aspects of research area. One can be dated back to Wulff \cite{Wu}, who initiated the study of an anisotropic convex functional in the theory of crystals and materials. The minimizer of the anisotropic convex functional is called Wulff shape. After that, so many works appeared to study the crystalline variational problem and Wulff shape, see e.g. \cite{Ta} and reference therein.  The other was first studied by Minkowski \cite{Mi2} in  so-called relative or Minkowski differential geometry, where the role of sphere can be assumed by some other smooth convex hypersurfaces, in contrast with Euclidean geometry. The questions arising from  relative or Minkowski geometry were intensively investigated by a number of mathematicians, see e.g. \cite{BF}, \cite{Bu}, \cite{Re}, \cite{G}, \cite{Th}, \cite{An} and so on.

In relative or Minkowski geometry, we are always given a Minkowski norm.
\begin{definition}\label{Min}
A function $F:\mathbb{R}^{n+1}\to [0,+\infty)$ is called a Minkowski norm if 
\begin{itemize}
\item[(i)] $F$ is a norm of $\mathbb{R}^{n+1}$, i.e.,  $F$ is a convex, $1$-homogeneous function satisfying $F(x)>0$ when $x\neq 0$;
\item[(ii)] $F\in C^\infty(\mathbb{R}^{n+1}\setminus\{0\})$;
\item[(iii)] $F$ satisfies a uniformly elliptic condition: $\hbox{Hess} (\frac12 F^2)$ is positive definite in $\mathbb{R}^{n+1}\setminus \{0\}$. 
\end{itemize}\end{definition}
 There is an important smooth convex hypersurface corresponding to $F$, the Wulff shape $\mathcal{W}_F=\{x\in\mathbb{R}^{n+1}|F(x)=1\}$. We can view it as the unit sphere in the Minkowski space $(\mathbb{R}^n,F)$. It plays the role in relative geometry as the standard sphere in classical Euclidean geometry.
  
For an $n$ -dimensional oriented hypersurface $M$ in $\mathbb{R}^{n+1}$, the area in relative geometry should be computed as $\int_M d\mu_F=\int_M F^0(\bar{\nu}) d\mathcal{H}^n$, with $\bar{\nu}$ the standard normal and $F^0$ the dual norm of $F$. The anisotropic Gauss map (anisotropic normal) of $M$ is a map from $M$ to the Wulff shape  $\mathcal{W}_F$. Using such anisotropic Gauss map,  the anisotropic curvatures can be well defined. The major difference between relative geometry  and Euclidean geometry lies on the fact that the metric we consider in $\mathbb{R}^{n+1}$ is not Euclidean metric any more, but a new one $G$ instead, depending on the second derivative of $F$ (see \eqref{G} below), which varies  from point to point.  As well, the metric on $M$ is chosen as $g$, the restriction of $G$ on $M$, but not that of the Euclidean metric. This arises serious complications and difficulties for the geometric problems.  We will review the definitions and foundations of relative geometry  in Section 2. The setting here is largely motivated by the work of Andrews \cite{An}, though the notations appear differently in this paper. 

As in the classical differential geometry, when $M$ is a closed strongly convex hypersurface in $\mathbb{R}^{n+1}$, the anisotropic Gauss map defines a diffeomorphism between $M$ and $\mathcal{W}_F$. Therefore, $M$ can be reparametrized  by the inverse anisotropic Gauss map. In turn, the anisotropic curvatures can be viewed as functions on $\mathcal{W}_F$. 
In particular, the anisotropic Gauss-Kronecker curvature $K(z)$ for $z\in \mathcal{W}_F$ must satisfy (see \eqref{necc} below) \begin{eqnarray}\label{necce}
\int_{\mathcal{W}_F} G(z)(z, E^\alpha)\frac{1}{K(z)} d\mu_F=0,\forall \alpha=1,\cdots, n+1,
\end{eqnarray}
where  $E^\alpha$ is the standard coordinate vectors in $\mathbb{R}^{n+1}$ .

The anisotropic Minkowski problem is the converse of the previous statement, namely, given a positive function $K$ on $\mathcal{W}_F$, can one find a closed strongly convex hypersurface whose anisotropic Gauss-Kronecker curvature is given by $K$ as a function on its anisotropic normals?

In this paper, we solve the anisotropic Minkowski problem. The main result is the following
\begin{theorem}\label{thm} Let $F$ be a Minkowski norm in $\mathbb{R}^{n+1}$. Let $K$ be a positive function in $C^k(\mathcal{W})$  with $k\geq 2$ and satisfy the condition \eqref{necce}. Then there is a $C^{k+1,\alpha} (\forall 0<\alpha<1)$ closed strongly convex hypersurface $M$ in $\mathbb{R}^{n+1}$ whose anisotropic Gauss-Kronecker curvature is $K$ as a function on its anisotropic normals. Moreover, $M$ is unique up to translations.
 \end{theorem}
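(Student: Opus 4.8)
\emph{Sketch of the approach.} The plan is to adapt the classical Nirenberg--Pogorelov--Cheng-Yau scheme to the relative geometry of $\mathcal{W}_F$. A closed strongly convex hypersurface $M$ is determined, up to translation, by its anisotropic support function $h$ on $\mathcal{W}_F$, and the position map $X\colon\mathcal{W}_F\to M$ is recovered from $h$ by an explicit first-order formula; the reconstruction field is closed (so that $X$ is globally defined) precisely when \eqref{necce} holds. Prescribing the anisotropic Gauss-Kronecker curvature then translates into a Monge-Amp\'ere type equation on $\mathcal{W}_F$ for the anisotropic ``matrix of principal radii'' $W[h]:=\bar\nabla^2 h+h\,L$, where $\bar\nabla$ is the Levi--Civita connection of the metric induced by $G$ on $\mathcal{W}_F$ and $L$ is the natural symmetric $2$-tensor attached to $F$ (reducing to the round metric of $\mathbb S^n$ in the Euclidean case), namely
\[
\det{}_{g}W[h]=\frac{1}{K}\qquad\text{on }\mathcal{W}_F ,
\]
to be solved among \emph{admissible} functions, i.e.\ those with $W[h]>0$. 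Theorem~\ref{thm} is then equivalent to existence and uniqueness, modulo adding an anisotropic support function of a point, of an admissible solution.

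For existence I would run the method of continuity. Since \eqref{necce} is linear in $1/K$, I would interpolate there: set $\psi_t=(1-t)c_0+t\,K^{-1}$ for a suitable positive constant $c_0$, so that each $\psi_t$ still satisfies \eqref{necce} (the constraint holds for the constant $c_0$ because $\int_{\mathcal{W}_F}G(z)(z,E^\alpha)\,d\mu_F=0$ by the divergence theorem on the closed hypersurface $\mathcal{W}_F$), and $K_t:=1/\psi_t>0$ is smooth; at $t=0$ a dilate of $\mathcal{W}_F$ is an explicit admissible solution. The linearized operator at an admissible $h$ is $\mathcal{L}v=a^{ij}\big(\bar\nabla^2 v+v\,L\big)_{ij}$, with $a^{ij}$ the cofactor matrix of $W[h]$; it is elliptic, and since the cofactor matrix is divergence free it is formally self-adjoint with respect to $d\mu_F$. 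An Obata-type identity on $\mathcal{W}_F$ shows the anisotropic support functions of translations lie in $\ker\mathcal{L}$, and one verifies (the anisotropic analogue of a classical fact) that $\ker\mathcal{L}$ equals exactly this $(n+1)$-dimensional space; the compatibility condition \eqref{necce} is then precisely $L^2(d\mu_F)$-orthogonality of the linearized right-hand side to $\ker\mathcal{L}=\ker\mathcal{L}^*$, so $\mathcal{L}$ is an isomorphism between the corresponding complements and the solvable set of parameters is open. Closedness reduces to a priori estimates.

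The a priori estimates are the heart of the matter. The $C^0$ estimate pins $h$ between two positive constants depending only on $\min K$ and $\max K$: evaluating the equation at the maximum of $h$ gives $\max h\geq(\min 1/K)^{1/n}>0$, and the remaining bounds follow by combining the integral identity produced by the divergence-free cofactors, the convexity of $h$, and the constraint \eqref{necce} after normalizing the position of the origin, exactly as in Cheng-Yau but now on $\mathcal{W}_F$. The $C^1$ estimate is then automatic, since $|\bar\nabla h|$ is controlled by the extrinsic diameter of the body, hence by $\max h$. The $C^2$ estimate---pinching the eigenvalues of $W[h]$ between two positive constants---is the step I expect to be the main obstacle: the lower bound on the eigenvalues follows from the equation together with the upper bound, while the upper bound requires a Pogorelov-type argument, differentiating the equation twice and applying the maximum principle to an auxiliary quantity such as $\log\lambda_{\max}(W[h])+\varphi(|\bar\nabla h|^2)+A\,h$. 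In contrast with the Euclidean situation, the non-constant curvature of $\mathcal{W}_F$ and the point dependence of $G$ generate numerous extra first- and second-order terms (involving $\bar\nabla L$ and the Riemann tensor of $(\mathcal{W}_F,g)$) that must be carefully absorbed---this is where the ``serious complications'' of relative geometry really bite. Once $W[h]$ is uniformly pinched, the equation is uniformly elliptic and concave (as $\det^{1/n}$ is concave on positive matrices), so Evans--Krylov yields a $C^{2,\alpha}$ estimate and Schauder bootstrapping upgrades it to the claimed $C^{k+1,\alpha}$ regularity for $K\in C^k$, $k\geq2$; this closes the continuity method and gives existence.

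Finally, for uniqueness, suppose $h_1,h_2$ are two admissible solutions. Writing $W_i=W[h_i]$ and using $\det{}_g W_1-\det{}_g W_2=\tilde a^{ij}(W_1-W_2)_{ij}$ with $\tilde a^{ij}$ the positive definite cofactor matrix of $\int_0^1\big(sW_1+(1-s)W_2\big)\,ds$, the difference $h_1-h_2$ lies in the kernel of an elliptic operator $\tilde{\mathcal L}$ of the same structure as $\mathcal{L}$. By its self-adjoint divergence-form structure together with the Obata-type identity, that kernel is exactly the span of the anisotropic support functions of points; hence $h_1-h_2$ is such a function, i.e.\ $M_1$ and $M_2$ agree up to a translation. (Equivalently, uniqueness can be deduced from the equality case of an anisotropic Minkowski mixed-volume inequality applied to the two convex bodies.) This completes the proof of Theorem~\ref{thm}.
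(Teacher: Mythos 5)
Your overall strategy (reduce to a Monge--Amp\`ere equation for the anisotropic support function, continuity method, self-adjoint linearization with an $(n+1)$-dimensional kernel, uniqueness via the equality case of a mixed-volume/G\aa rding inequality) is the same as the paper's, but several of the steps you declare routine are exactly where the anisotropy breaks the classical argument. First, the operator is wrong as written: the matrix of anisotropic principal radii is not $\bar\nabla^2 h+h\,L$ but $S_{ij}-\tfrac12Q_{ijk}S_k+S\delta_{ij}$, where $Q$ is the third derivative of $\tfrac12F^2$ restricted to $\mathcal{W}$; this \emph{first-order} term cannot be absorbed into a zeroth-order tensor $hL$ and vanishes only when $F$ is quadratic. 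Its presence is the source of most of the new difficulties, so omitting it is not a notational choice but a gap that propagates through the whole proof. Second, your justification of self-adjointness is false in this setting: the Newton tensor $\partial\sigma_n/\partial u_{ij}$ is \emph{not} divergence free here, because the commutation relation picks up terms $u_{ijk}-u_{ikj}=\tfrac12Q_{ikm}u_{mj}-\tfrac12Q_{ijm}u_{mk}$. Self-adjointness of $L_S$ with respect to $d\mu_F$ does hold, but only after a nontrivial cancellation between the divergence of the Newton tensor, the derivative of the density $\varphi$ relating $d\mu_F$ to $dV_g$ (which satisfies $\partial_i\log\varphi=g^{jk}A_{ijk}$), and the first-order $Q$-term of the operator; this identity has to be proved, it is not free.

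Third, the $C^0$ lower bound does not go through ``exactly as in Cheng--Yau'': the paper points out that Cheng--Yau's inner-radius argument is not applicable and replaces it by Andrews' Diskant-type inequality $r(M)\geq \mathrm{Vol}(\bar M)/|M|_F$ combined with a lower bound on the outer radius; some substitute of this kind is needed. Fourth, your $C^2$ step is only a gesture: because of the gradient term $Q_{ijk}S_k$ inside the determinant, one must first establish a genuine $C^1$ bound (the paper does this by a maximum principle applied to $\log|\nabla S|^2+e^{\alpha(m_2-S)}$, though a direct pointwise identity $|\nabla S|_g^2+S^2=G(z)(X,X)$ would also work once $R(M)$ is bounded), and then run a Yau/Guan--Li type argument on $\log(a+\Delta S)+e^{\beta(m_2-S)}$; the differentiated $Q$-terms are precisely what the auxiliary exponential is designed to absorb, and none of this is carried out in your sketch. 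Finally, your uniqueness argument via the cofactor of the averaged matrix would require re-proving the kernel lemma for an operator that is not the linearization at an admissible solution; the clean route (and the one the paper takes) is the parenthetical alternative you mention, i.e.\ the polarized G\aa rding inequality together with self-adjointness.
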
 
 \begin{remark}
 It can be seen from the proof that the smoothness of $F$,(ii) in Definition \ref{Min},  can be assumed only in $C^{k+3}(\mathbb{R}^{n+1}\setminus\{0\})$.
 \end{remark}
As in the classical Minkowski problem, we can reduce Theorem \ref{thm} to the solvability of a Monge-Amp\'ere type equation on the anisotropic support function $S$,
\begin{eqnarray}\label{0eq}
\det (S_{ij}-\frac12Q_{ijk}S_k+S\delta_{ij})=\frac{1}{K}\quad \hbox{ on }\mathcal{W}.
\end{eqnarray}

Here we give two remarks for \eqref{0eq}. First, the covariant derivatives of $S$ are all corresponding to the Riemannian metric $g$, which is the restriction of $G$ on $\mathcal{W}_F$, but not restriction  of the Euclidean metric on $\mathcal{W}_F$. Second, $Q_{ijk}$ is a $3$-tensor on $\mathcal{W}_F$, which corresponds to the third derivative of $F$. Hence in general, it does not vanish.  In fact, it vanishes if and only if $F$ is quadratic, in which case $\mathcal{W}_F$ is ellipsoid. This causes major difficulty when we prove a priori estimates for the Monge-Amp\'ere equation. For the detailed derivation of \eqref{0eq} and the reduction of the solvability of the anisotropic Minkowski problem to that of \eqref{0eq}, see Section 3.

As usual, we will apply the method of continuity to  solve \eqref{0eq}. 
The first issue is the a priori estimates for solutions of \eqref{0eq}. By modifying Cheng-Yau's proof in \cite{CY}, we are able to give a uniformly upper bound of the anisotropic outer radius of $M$, which leads to the $C^0$ estimate. To proceed to higher order estimates, it seems necessary to derive a uniformly positive lower bound of the anisotropic inner radius of $M$. Cheng-Yau's proof is highly nontrivial and seems not applicable. We  apply instead a new idea, which combines an inequality of Andrews \cite{An} and a uniformly positive lower  bound of  the anisotropic outer radius, to give an explicit uniformly  positive lower bound of the anisotropic inner radius of $M$.  The difficulty arises when we deal with the $C^2$ estimate. In the classical one, there is no gradient term in the equation. Also the simple representation of Gauss equation on the sphere makes the $C^2$ estimate possible without deriving $C^1$ estimate. Our situation is much more complicated due to both the gradient term in \eqref{0eq} and more complicated Gauss equation (see Lemma \ref{lem2-1}). It seems indispensable to derive the $C^1$ estimate first.
Fortunately, since we already have the positive lower and upper bound of $S$, we can choose an auxiliary function as the sum of gradient part and some lower order part, explicitly, we choose $W=\log |\nabla S|^2+e^{\alpha (m_2-S)}$, where $m_2$ is the upper bound of $S$, $\alpha$ is some  large constant. With this choice, we are able to use the maximum principle to obtain bounds for $W$ and then bounds for $|\nabla S|$.
The $C^2$ estimate cannot be proved as usual  either. Here we adopt some idea of Yau's  proof in \cite{Ya} for Calabi conjecture and Guan-Li's  proof \cite{GL} for more general complex Monge-Amp\'ere equation. We choose an auxiliary function $\Phi=\log (a+\Delta S)+e^{\beta (m_2-S)}$, where $a,\beta$ are some constant. Then it is possible to derive bound for $\Phi$ and then bound for $|\nabla^2 S|$.

Besides the a priori estimates  for solutions of \eqref{0eq}, we also need to prove the openness of sets of solutions. Thus it is necessary to study the linearized operator $L_S$ of $S\mapsto \det (S_{ij}-\frac12Q_{ijk}S_k+S\delta_{ij})$. In the classical proof, the divergence free property of Newton transformation $\frac{\partial \det (u_{ij})}{\partial u_{ij}}$ is quite important to prove the self-adjointness of $L_S$. Here such property fails. However, by using the explicit Gauss equation, we still be able to prove the self-adjointness of $L_S$ with respect to the anisotropic measure $d\mu_F$ (see Lemma \ref{adjoint}). The kernel of $L_S$ is explicitly derived as well (see Lemma \ref{ker}). With these at hand, the openness can be proved in a standard way.
The uniqueness part in  Theorem \ref{thm} follows easily from Lemma \ref{adjoint} and \ref{ker}.

\

The organization of our paper is as follows. In Section 2, we first review some basic concepts and foundations in relative geometry, such as the anisotropic Gauss map, the anisotropic second fundamental form, the anisotropic curvatures and so on. We then  derive the anisotropic version of Gauss-Weingarten formulas and Gauss-Codazzi equations. The relationship between the anisotropic volume form $d\mu_F$ and the induced volume form of the metric $g$ is also revealed. In Section 3, we formulate the anisotropic Minkowski problem as the Monge-Amp\'ere equation \eqref{0eq} on  the anisotropic support function in details.  After such preparations, we shall establish the a priori estimates for the equation \eqref{0eq} in Section 4. In the last section, we derive the openness of the set of solutions and then give a complete proof of Theorem \ref{thm}.

\

\noindent\textbf{Notation:} Throughout this paper, the Latin alphabet $i,j,k,\cdots$ denotes indices from $1$ to $n$ and the Greek alphabet $\alpha,\beta,\gamma,\cdots$ denotes indices from
$1$ to $n+1$.  We will always use the Einstein summation convention. 

 \
 
 \section{Relative geometry of hypersurface}

Let $F$ be a Minkowski norm in $\mathbb{R}^{n+1}$. The dual norm of $F$ is defined as  $$F^0(\xi):=\sup_{x\neq 0}\frac{\langle x,\xi\rangle}{F(x)},\quad \xi\in \mathbb{R}^{n+1}.$$ By the assumption on $F$, we see that $F^0$ is a also norm that at least belongs to $C^2(\mathbb{R}^{n+1}\setminus\{0\})$.
The following properties is quite simple consequences of $1$-homogeneous of $F$ and $F^0$. We omit the proof here.
\begin{proposition}\label{property1}
\
\begin{itemize}
\item[(i)] $ \frac{\partial F}{\partial x^\alpha}(x)x^\alpha=F(x),\quad\frac{\partial F^0}{\partial \xi^\alpha}(\xi)\xi^\alpha=F^0(\xi);$
\item[(ii)] $\frac{\partial^2 F}{\partial x^\alpha\partial x^\beta}(x)x^\alpha=0,\quad \frac{\partial^2 F^0}{\partial \xi^\alpha\partial \xi^\beta}(\xi)\xi^\alpha=0,\hbox{ for }x,\xi\neq 0, \quad\forall \beta=1,\cdots,n+1;$
\item[(iii)] $F(D F^0(\xi))=1,\quad F^0(D F(x))=1,\hbox{ for }x,\xi\neq 0;$
\item[(iv)]  $F^0(\xi)D F(DF^0(\xi))=\xi,\quad F(x)D F^0(D F(x))=x,\hbox{ for }x,\xi\neq 0.$
\end{itemize}
Here $D F=(\frac{\partial F}{\partial x^1},\cdots,\frac{\partial F}{\partial x^{n+1}})$ and $D F^0=(\frac{\partial F^0}{\partial \xi^1},\cdots,\frac{\partial F^0}{\partial \xi^{n+1}})$.
\end{proposition}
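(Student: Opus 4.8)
The plan is to deduce all four identities from two ingredients: Euler's relation for positively $1$-homogeneous functions, and the interpretation of $DF$ and $DF^0$ as the polarity (Legendre) maps between the unit balls of $F$ and $F^0$, for which convexity --- and, at one point, the uniform ellipticity of Definition \ref{Min}(iii) --- is what is needed. First note that $F^0$ is itself positively $1$-homogeneous, since $F^0(t\xi)=tF^0(\xi)$ for $t>0$ directly from its definition, so every derivative appearing in the statement makes sense (recall $F^0\in C^2(\mathbb{R}^{n+1}\setminus\{0\})$). For (i) I would differentiate the homogeneity relation $F(tx)=tF(x)$ in $t$ and set $t=1$, and run the identical computation for $F^0$. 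For (ii), differentiating the identity in (i) with respect to $x^\beta$ gives $\frac{\partial^2F}{\partial x^\alpha\partial x^\beta}(x)x^\alpha+\frac{\partial F}{\partial x^\beta}(x)=\frac{\partial F}{\partial x^\beta}(x)$, whence the claim; equivalently, each $\partial F/\partial x^\beta$ is $0$-homogeneous and Euler's relation applies to it. The same argument works for $F^0$.

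For (iii), convexity of $F$ together with (i) gives, for all $x,y\ne0$, $F(y)\ge F(x)+\langle DF(x),y-x\rangle=\langle DF(x),y\rangle$, so $\langle DF(x),y\rangle/F(y)\le1$ with equality at $y=x$; taking the supremum over $y\ne0$ yields $F^0(DF(x))=1$. The dual identity $F(DF^0(\xi))=1$ follows from the identical argument applied to $F^0$, together with the reflexivity $F^{00}=F$ of a norm.

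For (iv), fix $x\ne0$ and put $\xi=DF(x)$, so $F^0(\xi)=1$ by (iii). Rearranging the definition of $F^0$ gives the generalized Cauchy--Schwarz inequality $\langle y,\xi\rangle\le F(y)F^0(\xi)=F(y)$ for all $y$. Now both $y_1:=DF^0(\xi)$ and $y_2:=x/F(x)$ lie on $\{F=1\}$ --- the first by (iii) applied to $F^0$, the second by $1$-homogeneity --- and each attains equality in that inequality: $\langle y_1,\xi\rangle=\langle DF^0(\xi),\xi\rangle=F^0(\xi)=1$ by (i) for $F^0$, while $\langle y_2,\xi\rangle=\langle x,DF(x)\rangle/F(x)=1$ by (i) for $F$. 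Hence $y_1$ and $y_2$ are both maximizers of the linear functional $\langle\cdot,\xi\rangle$ on the convex body $\{F\le1\}$. The only step here that is not formal bookkeeping with homogeneity --- and the step I expect to be the main obstacle --- is that this maximizer is unique; this is precisely where the uniform ellipticity of Definition \ref{Min}(iii) enters, since for a vector $v$ with $\langle DF(x),v\rangle=0$ the relation $\mathrm{Hess}(\tfrac12F^2)=DF\otimes DF+F\,\mathrm{Hess}\,F$ forces $F(x)\langle\mathrm{Hess}\,F(x)v,v\rangle=\langle\mathrm{Hess}(\tfrac12F^2)(x)v,v\rangle>0$, so the second fundamental form of $\{F=1\}$ is positive definite, $\{F\le1\}$ is strictly convex, and the maximizer is unique. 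Therefore $y_1=y_2$, i.e. $F(x)\,DF^0(DF(x))=x$; exchanging the roles of $F$ and $F^0$ gives $F^0(\xi)\,DF(DF^0(\xi))=\xi$. (Alternatively one may differentiate $F^0(DF(x))\equiv1$ in $x$ to get $DF^0(DF(x))\in\ker\mathrm{Hess}\,F(x)$, show that this kernel equals $\mathbb{R}x$ --- again using the ellipticity and (ii) --- and pin down the scalar by pairing with $DF(x)$.)
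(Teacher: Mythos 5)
The paper gives no proof of this proposition at all (``The following properties is quite simple consequences of $1$-homogeneous of $F$ and $F^0$. We omit the proof here.''), so there is no argument of the author's to compare yours against; I can only judge your proof on its own terms. Parts (i)--(iii) are correct and complete: Euler's relation, its differentiated form (equivalently, $0$-homogeneity of $DF$), and for (iii) the gradient inequality $F(y)\ge F(x)+\langle DF(x),y-x\rangle=\langle DF(x),y\rangle$ with equality at $y=x$, plus the standard finite-dimensional reflexivity $F^{00}=F$. Your proof of the first identity of (iv), $F(x)\,DF^0(DF(x))=x$, is also correct, and you correctly isolate the one nontrivial point — uniqueness of the maximizer of $\langle\cdot\,,DF(x)\rangle$ on $\{F\le1\}$ — and correctly derive the needed strict convexity of $\{F\le1\}$ from $\mathrm{Hess}(\tfrac12F^2)>0$.

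The one soft spot is the closing phrase ``exchanging the roles of $F$ and $F^0$.'' Run literally, the exchanged argument maximizes a linear functional over the \emph{dual} ball $\{F^0\le1\}$, and uniqueness of that maximizer needs strict convexity of $\{F^0\le1\}$; this is not a hypothesis (Definition \ref{Min}(iii) is imposed on $F$, not on $F^0$, and the paper only records $F^0\in C^2$). The fact is true, but it follows from the smoothness of $F$ (a $C^1$ unit ball has a strictly convex polar), not from the ellipticity you invoked for the primal ball, so this step needs its own short justification — otherwise one risks circularity, since the usual proof that $\mathrm{Hess}(\tfrac12(F^0)^2)>0$ itself passes through (iv). A clean way to close the gap without any strict convexity of the dual ball: for $F^0(\xi)=1$ set $y=DF^0(\xi)$; by definition of $F^0$ the function $z\mapsto F(z)-\langle z,\xi\rangle$ is nonnegative, and by (i) and (iii) it vanishes at $z=y\neq0$, where $F$ is differentiable; hence $DF(y)=\xi$, and homogeneity gives $F^0(\xi)\,DF(DF^0(\xi))=\xi$ in general. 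With that one-line repair (or with your parenthetical kernel argument adapted to the identity $F(DF^0(\xi))\equiv1$), the proof is complete.
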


A smooth convex hypersurface in $\mathbb{R}^{n+1}$ corresponding to $F$ is the Wulff shape $$\mathcal{W}_F:=\{x\in \mathbb{R}^{n+1}| F(x)=1\}.$$   Conversely, a smooth convex hypersurface $\mathcal{M}$ in $\mathbb{R}^{n+1}$ determines uniquely a convex function $F$ such that the Wulff shape corresponding to such $F$ is $\mathcal{M}$. 
Wulff shape plays the fundamental role as a comparison body in the relative differential geometry.

For an oriented $n$-dimensional hypersurface $M$ in $\mathbb{R}^{n+1}$,
 The function $F$ defines an anisotropic area functional
 \begin{equation*}
|M|_F:=\int_M F^0(\bar{\nu}) d\mathcal{H}^n,
\end{equation*}
where $\bar{\nu}$ and $\mathcal{H}^n$ denote the standard unit outer normal to $M$ and the $n$-dimensional Hausdorff measure respectively.  We denote by $d\mu_F=F^0(\bar{\nu}) d\mathcal{H}^n$ and call it the anisotropic measure. 

The unit anisotropic  outer normal is defined by $$\nu_F:=DF^0(\bar{\nu}).$$ It is easy to see from Proposition \ref{property1} (iii) that $\nu_F\in \mathcal{W}_F$. We call $\nu_F: M\to \mathcal{W}_F$  the anisotropic Gauss map. 

From now on we will omit the subscript $F$ for simplicity, i.e., $d\mu=d\mu_F$, $\nu=\nu_F$ and $\mathcal{W}=\mathcal{W}_F$, etc..

Since $\hbox{Hess}(\frac12 F^2)$ is positive definite, we can consider $\mathbb{R}^{n+1}$ as a Riemannian manifold equipped with a metric 
\begin{eqnarray}\label{G}
&& G(x)(\xi,\eta):=\sum_{\alpha,\beta=1}^{n+1} G_{\alpha\beta}(x)\xi^\alpha \eta^\beta=\sum_{\alpha,\beta=1}^{n+1}\frac{\partial^2(\frac12F^2)}{\partial x^\alpha \partial x^\beta}(x)\xi^\alpha \eta^\beta
\end{eqnarray}
 for $x\in\mathbb{R}^{n+1}$, $\xi,\eta\in T_x \mathbb{R}^{n+1}$.  It is easy to see from Proposition \ref{property1} (i) and (ii) that 
\begin{eqnarray}&&\label{G1}G(x)(x,x)=1, \quad G(x)(x,\xi)=0\hbox{ for any }x\in \mathcal{W}\subset\mathbb{R}^{n+1},\quad \xi\in T_x \mathcal{W}. 
\end{eqnarray} Similarly, for an oriented $n$-dimensional hypersurface $M$ in $\mathbb{R}^{n+1}$, we have $$G(\nu)(\nu,\nu)=1,\quad G(\nu)(\nu,\xi)=0\hbox{ for }x\in M,\quad \nu=\nu(x),\quad \xi\in T_x M.$$ 

We define $$g(x):=G(\nu(x))|_{T_x M}, \quad x\in M$$ as a Riemannian metric on $M\subset \mathbb{R}^{n+1}$.  This Riemannian metric will play a fundamental role in relative geometry of hypersurface. Note that in classical Euclidean geometry the metric on a hypersurface is the restriction of the Euclidean metric. This is the major difference between the relative geometry and Euclidean geometry.

Since $F$ is not quadratic, the third derivative of $F$ does not vanish. We denote 
\begin{eqnarray*}
Q(x)(\xi,\eta,\zeta):=\sum_{\alpha,\beta,\gamma=1}^{n+1} Q_{\alpha\beta\gamma}(x)\xi^\alpha \eta^\beta \zeta^\gamma=\sum_{\alpha,\beta,\gamma=1}^{n+1} \frac{\partial^3(\frac12F^2)}{\partial x^\alpha \partial x^\beta \partial x^\gamma}(x)\xi^\alpha \eta^\beta \zeta^\gamma,
\end{eqnarray*}
for $x,\xi,\eta,\zeta\in\mathbb{R}^{n+1}$.
It follows again from Proposition \ref{property1} (i) and (ii) that 
\begin{eqnarray}\label{Q1}
Q(x)(x,\xi,\eta)=0.
\end{eqnarray}

\

To study the relative geometry of hypersurface, it is indispensable to define the anisotropic second fundamental form. 

For $X\in M\subset \mathbb{R}^{n+1}$, choose local coordinate  $\{y^\alpha\}_{\alpha=1}^{n+1}$ in $\mathbb{R}^{n+1}$, such that $\{\frac{\partial}{\partial y^\alpha}\}_{\alpha=1}^{n+1} $ are tangent to $M$ and $\frac{\partial}{\partial y^{n+1}}=\nu$ is the unit anisotropic  outer normal of $M$. By identification $\partial_i=\frac{\partial}{\partial y^i}=\partial_i X$ for $i=1,\cdots,n$. Let $g_{ij}(X)=g(\nu)(\partial_i X,\partial_j X)$ be the Riemannian metric on $M$. Denote by $g^{ij}$ the inverse of $g_{ij}$.

Given the standard volume form $\Omega$ (Lebesgue measure)  in $\mathbb{R}^{n+1}$,  the anisotropic measure on $M$ can be interpreted as \begin{eqnarray}\label{vol}
d\mu=\Omega(\nu, \partial_1,\cdots,\partial_n) dy^1\cdots dy^n. \end{eqnarray}
 This follows from the fact that $d\mu=F^0(\bar{\nu}) \Omega(\bar{\nu}, \partial_1,\cdots,\partial_n) dy^1\cdots dy^n $ and $\nu=F^0(\bar{\nu})\bar{\nu}+\hbox{tangent part}$, due to Proposition \ref{property1} (i).

 The anisotropic second fundamental form is defined by $$h_{ij}(X)=-G(\nu)(\nu,\partial_i\partial_j X).$$ It is a symmetric $2$-tensor on $M$.

With the Riemannian metric $g$ and the anisotropic second fundamental form, we can define  the anisotropic principle curvatures and the Gauss-Kronecker curvature.
\begin{definition} The eigenvalues of the anisotropic second fundamental form $h_{ij}$ with respect to the metric $g_{ij}$ (i.e. the eigenvalues of the matrix $g^{jk}h_{ij}$) are called the anisotropic principle curvatures. The inverse of the anisotropic principle curvatures are called anisotropic principle radii.
\end{definition}

\begin{definition}
The anisotropic mean curvature is $H=g^{ij}h_{ij}$. The anisotropic Gauss-Kronecker curvature is $K=\det(g^{jk}h_{ij})$.
\end{definition}

\begin{proposition}\label{convex}
A hypersurface $M$ is convex (strongly convex resp.) if and only if $(h_{ij})\geq 0$ ($>0$  resp.).
\end{proposition}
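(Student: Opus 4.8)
The plan is to reduce the statement to the classical characterization of convexity through the Euclidean second fundamental form, by showing that $h_{ij}$ agrees with the latter up to a strictly positive scalar factor. Write $\bar h_{ij}:=-\langle\bar\nu,\partial_i\partial_j X\rangle$ for the Euclidean second fundamental form of $M$ with respect to the outer unit normal $\bar\nu$, and recall the classical fact that $M$ is convex (strongly convex resp.) if and only if $(\bar h_{ij})\geq 0$ ($>0$ resp.). Since multiplying a symmetric matrix by a positive number preserves both positive semidefiniteness and positive definiteness, the proposition will follow once we establish the pointwise identity
\begin{equation*}
h_{ij}=\frac{1}{F^0(\bar\nu)}\,\bar h_{ij}\qquad\text{on }M.
\end{equation*}

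To obtain this identity I would first compute $G(\nu)(\nu,\cdot)$ along $M$. From \eqref{G} one has $G_{\alpha\beta}(x)=\frac{\partial F}{\partial x^\alpha}(x)\frac{\partial F}{\partial x^\beta}(x)+F(x)\frac{\partial^2 F}{\partial x^\alpha\partial x^\beta}(x)$; contracting with $\nu^\alpha$ and using Proposition \ref{property1} (i) for the first term and (ii) for the second gives $G(\nu)(\nu,V)=F(\nu)\,DF(\nu)\cdot V=DF(\nu)\cdot V$ for all $V\in\mathbb{R}^{n+1}$, where $F(\nu)=1$ because $\nu\in\mathcal{W}$. Next, Proposition \ref{property1} (iv) applied to $\xi=\bar\nu$, together with $\nu=DF^0(\bar\nu)$, yields $F^0(\bar\nu)\,DF(\nu)=\bar\nu$, so $DF(\nu)=\bar\nu/F^0(\bar\nu)$. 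Taking $V=\partial_i\partial_j X$ and combining these two facts gives $G(\nu)(\nu,\partial_i\partial_j X)=\frac{1}{F^0(\bar\nu)}\langle\bar\nu,\partial_i\partial_j X\rangle$, whence $h_{ij}=-G(\nu)(\nu,\partial_i\partial_j X)=\frac{1}{F^0(\bar\nu)}\bar h_{ij}$.

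Finally, since $F^0$ is a norm and $\bar\nu\neq 0$, the factor $F^0(\bar\nu)$ is positive, so $(h_{ij})\geq 0\Leftrightarrow(\bar h_{ij})\geq 0$ and $(h_{ij})>0\Leftrightarrow(\bar h_{ij})>0$ at every point of $M$, and the classical characterization quoted above finishes the argument. I do not anticipate a genuine obstacle here: the only point needing care is that ``convex'' is read in its usual sense --- the hypersurface lying locally on one side of each of its tangent hyperplanes, equivalently $(\bar h_{ij})\geq 0$ --- which is exactly the classical result being imported; everything else is the elementary homogeneity bookkeeping already recorded in Proposition \ref{property1}.
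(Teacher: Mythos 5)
Your proof is correct and follows essentially the same route as the paper: both reduce the claim to the classical Euclidean statement via the pointwise identity $h_{ij}=\frac{1}{F^0(\bar\nu)}\bar h_{ij}$, obtained from $\nu=DF^0(\bar\nu)$ and the homogeneity identities of Proposition \ref{property1} (the paper invokes (iii) and (iv) directly, while you expand $G_{\alpha\beta}$ first, which amounts to the same computation). The only cosmetic difference is the sign convention for $\bar h_{ij}$, where your choice is actually the one consistent with the paper's displayed computation.
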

\begin{proof}we just need to observe that \begin{eqnarray*}
h_{ij}=\frac{1}{F^0(\bar{\nu})}\bar{h}_{ij},
\end{eqnarray*}
where $\bar{h}_{ij}$ is the standard second fundamental form $\bar{h}_{ij}=\langle\bar{\nu},\partial_i\partial_j X\rangle_{\mathbb{R}^{n+1}}$.

Indeed, 
 in view of $\nu=DF^0(\bar{\nu})$, we have
\begin{eqnarray*}
h_{ij}&=& -G(\nu)(\nu,\partial_i\partial_j X)=-DF(DF^0(\bar{\nu}))\cdot\partial_i\partial_j X\\&=&-\frac{\bar{\nu}}{F^0(\bar{\nu})}\cdot\partial_i\partial_j X=\frac{1}{F^0(\bar{\nu})}\bar{h}_{ij}.\end{eqnarray*}
Here we have used Proposition \ref{property1} (iii) and (iv).
\end{proof}

Similarly as in the classical theory of hypersurface, we have the following Gauss and Weingarten formulas, Gauss and Codazzi equations.
\begin{lemma} \label{lem2-1}
 \begin{eqnarray}\label{Gauss}
\partial_i\partial_j X=-h_{ij}\nu+\nabla_{\partial_i } \partial_j +A_{ij}^k  \partial_k X; \hbox{ (Gauss formula)}\end{eqnarray}
\begin{eqnarray}\label{Weigarten}
\partial_i \nu=g^{jk}h_{ij}\partial_k X; \hbox{ (Weingarten formula) }
\end{eqnarray}
 \begin{eqnarray}\label{Gausseq}
R_{ijkl}&=&h_{ik}h_{jl}-h_{il}h_{jk}+\nabla_{\partial_l } A_{jki}-\nabla_{\partial_k } A_{jli}\\&&+A_{jk}^m A_{mli}-A_{jl}^m A_{mki}; \hbox{ (Gauss equation) }\nonumber
\end{eqnarray}
\begin{eqnarray}\label{Codazzi}
h_{ijk}+h_j^l A_{lki}=h_{ikj}+h_k^l A_{lji}. \hbox{ (Weingarten formula) }
\end{eqnarray}

Here $\nabla$ is the Levi-Civita connection of the Riemannian metric $g$, $R$ is the Riemannian curvature tensor of $g$, $A$ is a $3$-tensor
\begin{eqnarray}\label{A}
A_{ijk}=-\frac12\left(h_i^l Q_{jkl}+h_j^l Q_{ilk}-h_k^l Q_{ijl}\right),
\end{eqnarray} where $Q_{ijk}=Q(\nu)(\partial_i X, \partial_j X, \partial_k X)$.
\end{lemma}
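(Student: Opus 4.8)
The plan is to carry out all computations in the ambient space $\mathbb R^{n+1}$, decomposing vector fields along $M$ with respect to the moving frame $\{\partial_1 X,\dots,\partial_n X,\nu\}$ and the point-dependent inner product $G(\nu)$. The first point is that $\nu$ is transversal to $T_X M$: by Proposition \ref{property1}(i) we have $\nu\cdot\bar\nu=DF^0(\bar\nu)\cdot\bar\nu=F^0(\bar\nu)>0$, whereas $\bar\nu$ is Euclidean-orthogonal to $T_X M$, so $\nu\notin T_X M$. Hence every ambient vector field along $M$ splits uniquely into a $\nu$-part and a $T_X M$-part, and since $G(\nu)(\nu,\nu)=1$ and $G(\nu)(\nu,\partial_m X)=0$, the $\nu$-component of a vector $V$ is exactly $G(\nu)(\nu,V)$.

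For the Weingarten formula, note that $\nu(X)\in\mathcal W$ forces $G(\nu)(\nu,\nu)\equiv 1$ along $M$; differentiating in $y^k$ and discarding the resulting third-derivative term by \eqref{Q1} (which makes $Q(\nu)(\nu,\cdot,\cdot)=0$) gives $G(\nu)(\nu,\partial_k\nu)=0$, so $\partial_k\nu$ is tangent to $M$. Differentiating instead the identity $G(\nu)(\nu,\partial_j X)\equiv 0$, again killing the $Q$-term by \eqref{Q1} and using the definition $h_{kj}=-G(\nu)(\nu,\partial_k\partial_j X)$, gives $G(\nu)(\partial_k\nu,\partial_j X)=h_{kj}$; raising the index with $g^{jm}$ yields \eqref{Weigarten}.

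For the Gauss formula, write $\partial_i\partial_j X=-h_{ij}\nu+B_{ij}^k\partial_k X$, the $\nu$-coefficient being $G(\nu)(\nu,\partial_i\partial_j X)=-h_{ij}$ by definition of $h_{ij}$. To identify $B_{ij}^k$, differentiate $g_{ij}=G(\nu)(\partial_i X,\partial_j X)$; using $\partial_\gamma G_{\alpha\beta}=Q_{\alpha\beta\gamma}$, the formula for $\partial_k\nu$ just obtained, and the expression $\partial_k\partial_i X=-h_{ki}\nu+B_{ki}^l\partial_l X$, one gets $\partial_k g_{ij}=h_k^l Q_{lij}+B_{ki}^l g_{lj}+B_{kj}^l g_{li}$. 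Then the Koszul combination $\partial_i g_{jk}+\partial_j g_{ik}-\partial_k g_{ij}$, together with the symmetry $B_{ij}^k=B_{ji}^k$ coming from $\partial_i\partial_j X=\partial_j\partial_i X$, isolates $B_{ij}^l g_{lk}$ as the Christoffel symbol of the first kind of $g$ plus a correction built from $h$ and $Q$; comparing that correction with the definition \eqref{A} of $A_{ijk}$ (using the total symmetry of $Q$) identifies it as $A_{ijk}$, so that $B_{ij}^k=\Gamma_{ij}^k+A_{ij}^k$, which is \eqref{Gauss}. The equations \eqref{Gausseq} and \eqref{Codazzi} then follow from the integrability condition $\partial_k(\partial_i\partial_j X)=\partial_i(\partial_k\partial_j X)$: differentiating \eqref{Gauss} once more and substituting \eqref{Weigarten} and \eqref{Gauss}, the $\nu$-component of $\partial_k(\partial_i\partial_j X)$ equals $-\partial_k h_{ij}-B_{ij}^m h_{km}$ and its $\partial_p X$-component equals $-h_{ij}h_k^p+\partial_k B_{ij}^p+B_{ij}^m B_{km}^p$. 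Equating $\nu$-components and rewriting $\partial_k h_{ij}$ through $\nabla_k h_{ij}$, the Christoffel remainder combining with the $A$-part of $B$, gives \eqref{Codazzi}; equating $\partial_p X$-components and splitting $B=\Gamma+A$, the purely $\Gamma$ part of the antisymmetrization in $i\leftrightarrow k$ gives the Riemann tensor $R$ of $g$, the $-h_{ij}h_k^p$ terms give the quadratic expression in $h$, and the remaining $\Gamma$--$A$ and $A$--$A$ terms assemble into the $\nabla A$ and $A\cdot A$ terms of \eqref{Gausseq}.

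I expect the main obstacle to be purely computational rather than conceptual: because $F$ is not quadratic, none of the $Q$- and $A$-terms drop out as in the Euclidean setting, so the Koszul step in the Gauss formula and, above all, the tangential part of the integrability identity require careful and repeated use of the total symmetry of $Q$ and of the symmetries of $A$ to see that the surviving terms reorganize exactly into $A_{ij}^k$, $\nabla_l A_{jki}-\nabla_k A_{jli}$ and $A_{jk}^m A_{mli}-A_{jl}^m A_{mki}$; the danger lies only in sign and index errors.
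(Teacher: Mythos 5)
Your proposal is correct and follows essentially the same route as the paper: decomposition in the moving frame $\{\partial_1X,\dots,\partial_nX,\nu\}$ with respect to $G(\nu)$, differentiation of $G(\nu)(\nu,\nu)=1$ and $G(\nu)(\nu,\partial_jX)=0$ using \eqref{Q1} for the Weingarten formula, the Koszul combination applied to $\partial_k g_{ij}$ to identify $B_{ij}^k=\Gamma_{ij}^k+A_{ij}^k$, and the symmetry of third derivatives of $X$ for the curvature identities. The only (harmless) deviation is that you extract Codazzi from the normal component of $\partial_k\partial_i\partial_jX=\partial_i\partial_k\partial_jX$, which yields $\nabla_kh_{ij}+h_k^mA_{ijm}=\nabla_ih_{kj}+h_i^mA_{kjm}$ --- equivalent to the stated \eqref{Codazzi} only after invoking $A_{ijk}+A_{ikj}=-h_i^lQ_{ljk}$ from Remark 2.2 --- whereas the paper obtains \eqref{Codazzi} directly by differentiating the Weingarten relation.
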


\begin{proof}
Taking derivative of the equation $G(\nu)(\nu,\nu)=1$, $G(\nu)(\nu,\partial_j X)=0$ and using \eqref{Q1}
we have
\begin{eqnarray*}
G(\nu)(\partial_i \nu,\nu)=0, 
\end{eqnarray*}
\begin{eqnarray*}
G(\nu)(\partial_i \nu,\partial_i X)+G(\nu)(\nu,\partial_i \partial_j X)=0,
\end{eqnarray*}
which implies the Weingarten formula \eqref{Weigarten}.

To verify the Gauss formula \eqref{Gauss}, it is sufficient to give the explicit formula \eqref{A} for $A$. Denote $\Gamma_{ij}^k$ the Christoffel symbol with respect to  $\nabla$.
Taking derivative of the equation $g_{ij}=G(\nu)(\partial_i X,\partial_j X)$, we have
\begin{eqnarray}\label{weig}
&&\partial_k g_{ij}= G(\nu)(\partial_k \partial_i X, \partial_j X)+G(\nu)( \partial_i X, \partial_k\partial_j X)+Q(\nu)(\partial_k\nu, \partial_i X, \partial_j X)
\\&=& (\Gamma_{ik}^l+A_{ik}^l)g_{jl}+(\Gamma_{jk}^l+A_{jk}^l)g_{il}+h_{k}^l Q_{ijl}.\nonumber
\end{eqnarray}
Note that $\Gamma_{ij}^k=\frac12 g^{kl}(\partial_i g_{jl}+\partial_j g_{il}-\partial_l g_{ij})$. Then
\eqref{A} follows easily from \eqref{weig}.

Taking covariant derivative of the Weingarten formula \eqref{Weigarten} , we have
\begin{eqnarray*}
\nabla_{\partial_j} \nabla_{\partial_i} \nu=( h^k_{i,j}+h_i^l A_{jl}^k)\partial_k X+ \hbox{ anisotropic normal part }.
\end{eqnarray*}
Then the Codazzi equation \eqref{Codazzi} follows from the symmetry $\nabla_{\partial_j} \nabla_{\partial_i} \nu=\nabla_{\partial_i} \nabla_{\partial_j} \nu$.

We are remained with the verification of the Gauss equation \eqref{Gausseq}. We choose the normal coordinate at some point $p_0$ with respect to $g$, such that $g_{ij}(p_0)=\delta_{ij}$ and $\Gamma_{ij}^k(p_0)=0$.
Taking derivative of the Gauss formula \eqref{Gauss}, we have at $p_0$,
\begin{eqnarray*}
&&\partial_l \partial_j \partial_i X=\nabla_{\partial_l}\left[-h_{ij}\nu+(\Gamma_{ij}^k+A_{ij}^k)\partial_k X\right]\\
&=& \left[-h_{ij}h_l^{k}+\partial_l(\Gamma_{ij}^k+A_{ij}^k)+A_{ij}^mA_{lm}^k\right]\partial_k X+\hbox{  anisotropic normal part  }.
\end{eqnarray*}
Hence \begin{eqnarray}\label{gau}
&&0=\partial_l \partial_j \partial_i X-\partial_j \partial_l \partial_i X\\&=&\left[h_{il}h_j^k-h_{ij}h_l^{k}+\partial_l\Gamma_{ij}^k-\partial_j\Gamma_{il}^k+\partial_l A_{ij}^k-\partial_jA_{il}^k+A_{ij}^mA_{lm}^k-A_{il}^mA_{jm}^k\right]\partial_k X\nonumber\\
&&+\hbox{  anisotropic normal part }.\nonumber
\end{eqnarray}

By definition of Riemannian curvature, at $p_0$,
\begin{eqnarray*}
R_{ijkl}&=& g(\nu)(\nabla_{\partial_k} \nabla_{\partial_l} \partial_ j X-\nabla_{\partial_l} \nabla_{\partial_k} \partial_j X, \partial_i X)\\
&=&\partial_k\Gamma_{lj}^i-\partial_l\Gamma_{kj}^i.	
\end{eqnarray*}
Now it follows from \eqref{gau} that
\begin{eqnarray*}
R_{ijkl}=h_{ik}h_{jl}-h_{il}h_{jk}+\partial_l  A_{jki}-\partial_k  A_{jli}+A_{jk}^m A_{mli}-A_{jl}^m A_{mki}.
\end{eqnarray*}
Since both sides are tensors, \eqref{Gausseq} holds at every point. We complete the proof.
\end{proof}

\begin{remark} We see from \eqref{A} that the $3$-tensor $A$ is symmetric in the first two indices, namely, $A_{ijk}=A_{jik}$. However, it is not totally symmetric in general. Indeed, it can be shown that $A_{ijk}+A_{ikj}=-h_i^l Q_{ljk}$.\end{remark}

We will compute in the following example  the geometry quantities defined above for the special case $M=\mathcal{W}$. It shows that $\mathcal{W}$ plays the same role in relative geometry as the standard sphere in classical Euclidean geometry. 
\begin{example}\label{exam} Consider the hypersurface $M=\mathcal{W}\subset\mathbb{R}^{n+1}$, the Wulff shape. In this case, the position vector and the unit anisotropic outer normal coincide, i.e., $X=\nu(X)$ for $X\in\mathcal{W}$. Hence $\partial_i X=\partial_i \nu=h_i^j \partial_j X$, which implies that $h_i^j=\delta_{ij}$ and  in turn, $h_{ij}=g_{ij}$. In this case $A_{ijk}=-\frac12 Q_{ijk}$ and it is totally symmetric for all the indices. It is easy to see that \begin{eqnarray}\label{4Q}
\nabla_{\partial_l}Q_{ijk}=\nabla_{\partial_k}Q_{ijl}.
\end{eqnarray} Hence the Gauss equation \eqref{Gausseq} can be easier written as
\begin{eqnarray}\label{Gausseq1}
R_{ijkl}&=& g_{ik}g_{jl}-g_{il}g_{jk}+\frac14 Q_{jk}^m Q_{mli}-\frac14 Q_{jl}^m Q_{mki}.
\end{eqnarray}
\end{example}

\

The following lemma states that the difference between anisotropic volume form and induced volume form by $g$ has a natural relationship with the tensor $A$. It is a simple but quite important observation in the relative geometry.
\begin{lemma}\label{volume}
Let $dV_g$ be the induced volume form of $M$ equipped with $g$. Assume that $d\mu=\varphi dV_g$. Then
\begin{equation*}
\partial_i \log\varphi=A_{ij}^j= g^{jk}A_{ijk}.
\end{equation*}
\end{lemma}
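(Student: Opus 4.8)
The plan is to express both volume forms in local coordinates and differentiate the ratio. In the chosen coordinates $\{y^\alpha\}$ with $\partial_i = \partial_i X$ tangent to $M$ and $\partial_{n+1}=\nu$, the induced volume form of $g$ is $dV_g = \sqrt{\det g_{ij}}\, dy^1\cdots dy^n$, while by \eqref{vol} the anisotropic measure is $d\mu = \Omega(\nu,\partial_1,\cdots,\partial_n)\, dy^1\cdots dy^n$. Hence $\varphi = \Omega(\nu,\partial_1,\cdots,\partial_n)/\sqrt{\det g_{ij}}$, and the task reduces to computing $\partial_i \log \Omega(\nu,\partial_1,\cdots,\partial_n)$ and $\partial_i \log\sqrt{\det g_{ij}}$ separately and comparing.

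For the second term, the standard identity $\partial_i \log\sqrt{\det g_{jk}} = \frac12 g^{jk}\partial_i g_{jk} = \Gamma_{ij}^j$ does the job. For the first term, I would differentiate $\Omega(\nu,\partial_1,\cdots,\partial_n)$ using the Leibniz rule: one gets a sum of terms in which one of the slots is differentiated. Using the Gauss formula \eqref{Gauss}, $\partial_i \partial_j X = -h_{ij}\nu + \nabla_{\partial_i}\partial_j + A_{ij}^k \partial_k X$, and the Weingarten formula \eqref{Weigarten}, $\partial_i \nu = g^{jk}h_{ij}\partial_k X$; since $\Omega$ is alternating, any term whose differentiated slot is a multiple of another slot $\partial_k X$ already present drops out, and the term involving $\nu$ contributes the coefficient of $\nu$ times $\Omega(\nu,\partial_1,\dots,\partial_n)$ only when the $\nu$-slot itself is not the one differentiated. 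Carefully bookkeeping: differentiating the $\nu$-slot gives $g^{jk}h_{ij}\Omega(\partial_k X,\partial_1,\cdots,\partial_n)=0$ since $\partial_k X$ is among the $\partial_1,\dots,\partial_n$; differentiating the $j$-th tangent slot gives $(\Gamma_{ij}^k + A_{ij}^k)\Omega(\nu,\partial_1,\cdots,\partial_k X\text{ in slot }j,\cdots,\partial_n)$ (the $-h_{ij}\nu$ and the off-diagonal $\nabla$-parts being killed by alternation), which equals $(\Gamma_{ij}^j + A_{ij}^j)\Omega(\nu,\partial_1,\cdots,\partial_n)$ after summing over $j$. Therefore $\partial_i \log\Omega(\nu,\partial_1,\cdots,\partial_n) = \Gamma_{ij}^j + A_{ij}^j$, and subtracting $\Gamma_{ij}^j$ gives $\partial_i \log\varphi = A_{ij}^j = g^{jk}A_{ijk}$, as claimed.

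The main obstacle is the careful combinatorial bookkeeping of which terms in the Leibniz expansion of $\partial_i \Omega(\nu,\partial_1,\cdots,\partial_n)$ survive the alternation of $\Omega$ — in particular making sure the Weingarten term contributes nothing and that only the diagonal parts of $\Gamma_{ij}^k$ and $A_{ij}^k$ remain; this is routine but is the one place where a sign or index slip would matter. An alternative, perhaps cleaner, route is to work at a point $p_0$ in $g$-normal coordinates (so $g_{ij}(p_0)=\delta_{ij}$, $\Gamma_{ij}^k(p_0)=0$), reducing the claim at $p_0$ to $\partial_i \log\varphi = A_{ij}^j$, and then note tensoriality of the final identity; I would likely present it this way to shorten the computation.
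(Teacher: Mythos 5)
Your proposal is correct and follows essentially the same route as the paper: write $\varphi=\Omega(\nu,\partial_1 X,\cdots,\partial_n X)/\sqrt{\det(g)}$, expand $\partial_i\Omega(\nu,\partial_1 X,\cdots,\partial_n X)$ by the Leibniz rule using the Gauss and Weingarten formulas (alternation kills the normal and off-diagonal contributions, leaving $\Gamma_{ij}^j+A_{ij}^j$), and subtract $\partial_i\log\sqrt{\det(g)}=\Gamma_{ij}^j$. The bookkeeping you flag as the main obstacle is exactly what the paper carries out, and your handling of it is right.
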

\begin{proof}
In local coordinates, $$\varphi=\frac{\Omega(\nu,\partial_1 X, \cdots, \partial_n X)}{\sqrt{\det (g)}}.$$
We compute that
\begin{eqnarray*}
&&\partial_i \log\Omega(\nu,\partial_1 X, \cdots, \partial_n X)\\&=&\frac{1}{\Omega(\nu,\partial_1 X, \cdots, \partial_n X)}\sum_{j=1}^n \Omega(\nu, \partial_1 X, \cdots,\partial_i\partial_jX,\cdots,\partial_n X)\\&=&(\Gamma_{ij}^j+A_{ij}^j).
\end{eqnarray*}
On the other hand,  since $g$ is a Riemannian metric on $M$, 
\begin{eqnarray*}
\partial_i \log \sqrt{\det (g)}=\frac12 g^{jk}\partial_i g_{jk}=\Gamma_{ij}^j.\\
\end{eqnarray*}
Therefore, we have \begin{eqnarray*}
\partial_i \log \varphi=A_{ij}^j.
\end{eqnarray*}
\end{proof}

\

\section{Formulation of the anisotropic Minkowski problem}
Let $M$ be an $n$-dimensional closed, strongly convex hypersurface in $\mathbb{R}^{n+1}$. Since the map $\mathbb{S}^n\to\mathcal{W}: \bar{\nu}\mapsto \nu=DF^0(\bar{\nu})$ defines a  nondegenerate diffeomorphism between $\mathbb{S}^n$ and $\mathcal{W}$, we easily see that the anisotropic Gauss map $\nu:M\to \mathcal{W}$ is everywhere nondegenerate diffeomorphism. We can use it reparametrize the convex hypersurface, i.e.

$$X:\mathcal{W}\to M\subset\mathbb{R}^{n+1}, \quad X(z)=X(\nu^{-1}(z)), \quad z\in\mathcal{W}.$$ By virtue of Proposition \ref{convex}, the anisotropic Gauss-Kronecker curvature of $M$ is positive.
With this parametrization, it can be viewed as a positive function $K(\nu^{-1}(z))$ on the Wulff shape $\mathcal{W}$.

The anisotropic Minkowski problem is the anisotropic version of Minkowski problem in classical geometry. Namely, it is a problem of prescribing the anisotropic Gauss-Kronecker curvature on the anisotropic normals of a closed strongly convex hypersurface. We state this problem as follows:

\

\noindent\textbf{Anisotropic Minkowski problem:} Given a positive function $K$ on $\mathcal{W}$, is there a closed strongly convex hypersurface whose anisotropic Gauss-Kronecker curvature is $K$ as a function on its anisotropic normals?

\

A necessary condition to this problem is that $K$ must satisfy
\begin{eqnarray}\label{necc}
\int_{\mathcal{W}} G(z)(z,E^\alpha)\frac{1}{K(z)} d\mu=0,\quad \forall \alpha=1,\cdots,n+1,
\end{eqnarray}
where $E^\alpha$ denote the standard $\alpha$-th coordinate vector in $\mathbb{R}^{n+1}$.
In fact, in view of \eqref{vol} and by using divergence theorem, we have
\begin{eqnarray*}
&&\int_{\mathcal{W}} G(z)(z,E^\alpha)\frac{1}{K(z)} d\mu\\&=&\int_{M} G(\nu)(\nu,E^\alpha)d\mu(M)=
\int_M G(\nu)(\nu,E^\alpha)\Omega(\nu,\partial_1,\cdots,\partial_n)dy^1\cdots dy^n\\&=&\int_M \Omega(E^\alpha,\partial_1,\cdots,\partial_n)dy^1\cdots dy^n=\int_{M} \langle \bar{\nu}, E^\alpha \rangle_{\mathbb{R}^{n+1}} dvol(M)\\
&=&\int_{\bar{M}} \div(E^\alpha) d\mathcal{H}^{n+1}=0.
\end{eqnarray*}
Here $\bar{M}$ is the body enclosed by $M$, and $dvol(M)=\Omega(\bar{\nu},\partial_1,\cdots,\partial_n)dy^1\cdots dy^n$ is the induced volume form of the Euclidean metric in $\mathbb{R}^{n+1}$.

As in the classical Minkowski problem, we will reduce the solvability of the anisotropic Minkowski problem to that of a fully nonlinear elliptic equation of a suitable support function. First of all, let us introduce the anisotropic support function.

The anisotropic support function of $M$ is defined as \begin{eqnarray*}
S(z)=\sup_{y\in M} G(z)(z,y)=G(z)(z,X(z)), \hbox{ for }z\in \mathcal{W}.
\end{eqnarray*}

We will compute the metric $g$ and the anisotropic second fundamental form $h$ of $M$ in terms of the anisotropic support function $S$.
 
Let $z\in\mathcal{W}$. Choose an orthonormal basis $\{e_i\}_{i=1}^n$ of $T_z \mathcal{W}=T_{X(z)} M$ with respect to the Riemannian metric $g$. Denote by $\nabla$ the covariant derivative with respect to $g$ on $\mathcal{W}$.
Taking the first covariant derivative of $S$, we have
\begin{eqnarray*}
\nabla_{e_i} S(z)&=& G(z)(\nabla_{e_i} z, X(z))+G(z)(z, \nabla_{e_i}  X(z))+Q(z)(\nabla_{e_i} z, z, X(z))\\&=&G(z)(\nabla_{e_i} z, X(z)),
\end{eqnarray*}
The last two terms vanish due to \eqref{G1} and \eqref{Q1}.

Taking the second covariant derivative of $S$, by using Gauss formula \eqref{Gauss} we have (we compute at normal coordinate of $g$, namely, $\nabla_{e_i} e_j=0$)
 \begin{eqnarray*}
\nabla_{e_i}\nabla_{e_j} S(z)&=&e_ie_j G(z)(z,X(z))=e_i(G(z)(e_j z,X))\\
&=&G(z)(e_i e_j z,X(z))+G(z)(e_j z,e_i X)+ Q(e_i z,e_j z,X(z))\\
&=&-\delta_{ij}G(z)(z,X(z))-\frac12Q(e_i z,e_j z,e_k z)G(z)(e_k, X(z))\\
&&-G(z)(z,(-h_{ij}(X)z))+Q(e_i z,e_j z,e_k z)G(z)(e_k, X(z))\\
&=&-\delta_{ij} S(z)+h_{ij}(X(z))+\frac12Q_{ijk}\nabla_{e_k}S(z).
\end{eqnarray*}
Here we also used the observation in Example \ref{exam}.

For simplicity, we use the abbreviation $S_i, S_{ij}$ to denote the covariant derivative of $g$.
Thus it follows from previous computation that the anisotropic second fundamental form of $M$ has the formula
\begin{eqnarray*}
h_{ij}(X(z))=S_{ij}(z)-\frac12Q_{ijk}S_k(z)+\delta_{ij} S(z),\quad\forall z\in\mathcal{W}.
\end{eqnarray*}

To compute the metric $g$ of $M$, we use the Weingarten formula \eqref{Weigarten},
$$e_i z=g^{jk}(X)h_{ij}(X)\nabla_{e_k} X,$$
from which we obtain
\begin{eqnarray*}
\delta_{ij}=g(z)(e_i z,e_j z)=h_{ik}g^{kl}h_{jl}.
\end{eqnarray*}
In turn, we have
\begin{eqnarray*}
g_{ij}=h_{ik}h_{jk}.
\end{eqnarray*}

Therefore, the anisotropic principal radii of $M$ are the eigenvalues of
$$g_{ik}h^{jk}=h_{ij}=S_{ij}(z)-\frac12Q_{ijk}S_k(z)+\delta_{ij} S(z),\quad\forall z\in\mathcal{W}.$$
The Gauss-Kronecker curvature is
$$K(z)=\frac{1}{\det(S_{ij}(z)-\frac12Q_{ijk}S_k(z)+\delta_{ij} S(z))},\quad\forall z\in\mathcal{W}.$$

\

In summary, we have proved the following proposition.
\begin{proposition}\label{P1}
Parametrizing a $C^2$ strongly convex hypersurface $M$ by the inverse anisotropic Gauss map over $\mathcal{W}$, we have that  the eigenvalue of $S_{ij}-\frac12Q_{ijk}S_k+S\delta_{ij} $ is the anisotropic principle radii of $M$. In particular, the  anisotropic Gauss-Kronecker curvature of $M$ satisfies 
\begin{eqnarray}\label{eq}
\det (S_{ij}-\frac12Q_{ijk}S_k+S\delta_{ij})=\frac{1}{K}\quad \hbox{ on }\mathcal{W}.
\end{eqnarray}
\end{proposition}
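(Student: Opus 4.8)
The plan is to prove Proposition \ref{P1} by direct computation, following exactly the strategy already set up in the preceding paragraphs: choose a good point $z \in \mathcal{W}$ and a good frame, differentiate the support function $S(z) = G(z)(z, X(z))$ twice covariantly with respect to $g$, and read off $h_{ij}$ in terms of $S$, $S_i$, $S_{ij}$. First I would fix $z \in \mathcal{W}$ and pick an orthonormal basis $\{e_i\}_{i=1}^n$ of $T_z\mathcal{W} = T_{X(z)}M$ with respect to $g$, extended to a normal frame at $z$ so that $\nabla_{e_i} e_j = 0$ at the point. The first covariant derivative gives $\nabla_{e_i} S = G(z)(\nabla_{e_i} z, X(z))$, the other two terms dying by \eqref{G1} and \eqref{Q1}. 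This already identifies $S_i$ with $G(z)(\nabla_{e_i} z, X(z))$, which is what is needed to handle the cross terms appearing after the second differentiation.

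Next I would compute $\nabla_{e_i}\nabla_{e_j} S$. Differentiating $e_j S = G(z)(\nabla_{e_j} z, X(z))$ produces three types of terms: one with $e_i e_j z = \nabla_{e_i}\nabla_{e_j} z$ (the covariant Hessian of the position-equals-normal map on $\mathcal{W}$), one with $\nabla_{e_j} z$ paired against $\nabla_{e_i} X$, and one $Q$-term from differentiating the metric coefficients. Here is where Example \ref{exam} enters decisively: on $\mathcal{W}$ one has $X = \nu$, $h_i^j = \delta_i^j$, $h_{ij} = g_{ij}$, and $A_{ijk} = -\tfrac12 Q_{ijk}$, so the Gauss formula \eqref{Gauss} applied to $\mathcal{W}$ gives $\nabla_{e_i}\nabla_{e_j} z = -g_{ij} z + A_{ij}^k \nabla_{e_k} z = -\delta_{ij} z - \tfrac12 Q_{ijk} e_k z$. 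Plugging in, using $G(z)(z, e_k z)=0$ and $G(z)(z, X(z)) = S$, and using Weingarten \eqref{Weigarten} for $M$ to write $\nabla_{e_i} X$ in terms of $\partial_k X$ while pairing with $\nabla_{e_j} z$, the surviving $Q$-contributions combine to $+\tfrac12 Q_{ijk} S_k$ and the normal contribution gives $h_{ij}$. The outcome is
\begin{eqnarray*}
h_{ij}(X(z)) = S_{ij}(z) - \tfrac12 Q_{ijk} S_k(z) + \delta_{ij} S(z).
\end{eqnarray*}

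Finally I would recover the metric. Since the anisotropic Gauss map of the strongly convex $M$ is a diffeomorphism, Weingarten \eqref{Weigarten} says $e_i z = g^{jk} h_{ij} \partial_k X$; pairing two such expressions with $g$ and using orthonormality of $\{e_i z\}$ yields $\delta_{ij} = h_{ik} g^{kl} h_{jl}$, hence $g_{ij} = h_{ik} h_{jk}$. Thus the anisotropic principal radii, being the eigenvalues of $g_{ik} h^{jk}$, coincide with the eigenvalues of $h_{ij}$ itself, i.e.\ of $S_{ij} - \tfrac12 Q_{ijk} S_k + S \delta_{ij}$, and taking determinants together with $K = \det(g^{jk} h_{ij})$ gives \eqref{eq}. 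The one genuinely delicate point — the main obstacle — is the second-derivative bookkeeping: one must be careful that the covariant derivatives are those of $g$ on $\mathcal{W}$ (not the Euclidean restriction), that the $Q$-term produced by differentiating $g$-coefficients is not double-counted against the $Q$-term hidden inside $\nabla_{e_i}\nabla_{e_j} z$ via $A_{ij}^k = -\tfrac12 Q_{ijk}$, and that the sign of the coefficient of $S_k$ comes out $+\tfrac12$ rather than $-\tfrac12$. Everything else is routine once Example \ref{exam} and Lemma \ref{lem2-1} are in hand.
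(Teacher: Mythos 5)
Your proposal is correct and follows essentially the same route as the paper: differentiate $S(z)=G(z)(z,X(z))$ twice in a $g$-normal frame, use Example \ref{exam} (the Gauss formula on $\mathcal{W}$ with $A_{ijk}=-\tfrac12 Q_{ijk}$) to evaluate $\nabla_{e_i}\nabla_{e_j}z$, obtain $h_{ij}=S_{ij}-\tfrac12 Q_{ijk}S_k+\delta_{ij}S$, and then invoke the Weingarten formula to get $g_{ij}=h_{ik}h_{jk}$ and hence identify the principal radii with the eigenvalues of that matrix. The sign bookkeeping you flag ($-\tfrac12 Q_{ijk}S_k$ from the Hessian of $z$ plus $+Q_{ijk}S_k$ from differentiating the metric coefficients, combining to $+\tfrac12 Q_{ijk}S_k$ in $\nabla_i\nabla_j S$) matches the paper exactly.
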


Conversely, 
given $S$ a $C^2$ function on $\mathcal{W}$ with $(S_{ij}-\frac12Q_{ijk}S_k+\delta_{ij} S)>0$, we are able to find a strongly convex hypersurface such that its anisotropic support function is $S$. 
\begin{proposition}\label{P2}
Any function $S\in C^2(\mathcal{W})$ with $(S_{ij}-\frac12Q_{ijk}S_k+S\delta_{ij}) >0$ is an anisotropic support function of  a $C^2$ strongly convex hypersurface $M$ in $\mathbb{R}^{n+1}$.
\end{proposition}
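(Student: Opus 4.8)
The plan is to reverse the construction of Section 3: given $S \in C^2(\mathcal{W})$ with $(S_{ij} - \tfrac12 Q_{ijk}S_k + S\delta_{ij}) > 0$, I will explicitly build the hypersurface $M$ as the image of a map $X \colon \mathcal{W} \to \mathbb{R}^{n+1}$ and then verify that $M$ is a $C^2$ strongly convex hypersurface whose anisotropic support function recovers $S$. The natural definition is suggested by the formula $S(z) = G(z)(z, X(z))$ together with the computation $\nabla_{e_i} S = G(z)(\nabla_{e_i} z, X(z))$: since $\{z\} \cup \{\nabla_{e_i} z\}_{i=1}^n$ spans $\mathbb{R}^{n+1}$ at each $z \in \mathcal{W}$ (because $z \mapsto z$ is a nondegenerate parametrization of $\mathcal{W}$, and $G(z)(z,z)=1$, $G(z)(z,\nabla_{e_i}z)=0$), the vector $X(z)$ is uniquely determined by the $n+1$ linear conditions $G(z)(z,X(z)) = S(z)$ and $G(z)(\nabla_{e_i} z, X(z)) = \nabla_{e_i} S(z)$. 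Equivalently, one can write $X(z) = S(z)\, z + \sum_i (\nabla_{e_i} S)(z)\, v_i(z)$, where $v_i(z) \in T_z\mathcal{W}$ is the basis dual to $\{\nabla_{e_i} z\}$ with respect to $G(z)$; a coordinate-free way to package this is as a section of the tangent-plus-normal bundle. Since $S \in C^2$, the map $X$ is $C^1$; I will need to check it is an immersion and that its image is a well-defined $C^2$ hypersurface — here the hypothesis $(S_{ij} - \tfrac12 Q_{ijk}S_k + S\delta_{ij}) > 0$ enters.

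The key steps, in order, are: (1) define $X(z)$ by the linear system above and check it is well-posed and $C^1$ on $\mathcal{W}$; (2) differentiate $X$ and show, using the relation $\nabla_{e_i}\nabla_{e_j} S = -\delta_{ij}S + h_{ij} + \tfrac12 Q_{ijk}S_k$ run in reverse, that $\partial_i X$ lies in a hyperplane $G$-orthogonal to $z$, i.e.\ $z$ serves as the (anisotropic) normal direction to $M := X(\mathcal{W})$ at $X(z)$, and that the "second fundamental form" of this hypersurface with respect to the metric $g = G(\nu)|_{TM}$ equals the positive matrix $h_{ij} := S_{ij} - \tfrac12 Q_{ijk}S_k + S\delta_{ij}$; (3) conclude from positivity of $(h_{ij})$ via Proposition \ref{convex} and the Weingarten formula \eqref{Weigarten} that $X$ is a nondegenerate immersion (the differential $dX$ has rank $n$ because $g_{ij} = h_{ik}h_{jk} > 0$), hence $M$ is an immersed $C^2$ hypersurface that is strongly convex; (4) verify $X$ is injective — e.g.\ by checking the Gauss map of $M$ is exactly $z \mapsto z$, so that if $X(z_1) = X(z_2)$ with distinct normals the surface would fail to be locally convex, or more directly observe that a complete strongly convex immersed hypersurface parametrized by its Gauss map is embedded; (5) finally compute the anisotropic support function of $M$ and check it equals $S$, which is immediate from the defining equation $G(z)(z, X(z)) = S(z)$ once we know $\sup_{y \in M} G(z)(z,y)$ is attained at $y = X(z)$ — and convexity guarantees this.

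The main obstacle I anticipate is step (2) together with the global/embedding issue in step (4). The reverse derivation of the second fundamental form requires care because the connection $\nabla$, the tensor $Q$, and the metric $g$ all appear, and one must confirm that the surface $X(\mathcal{W})$ really does have $z$ as its \emph{anisotropic} normal $\nu_F = DF^0(\bar\nu)$, not merely some normal-like direction; this amounts to checking the consistency relation $\nu(X(z)) = z$, which in turn relies on \eqref{G1} and the fact that $DF^0 \circ DF = \mathrm{id}$ on the relevant unit level set. For embeddedness, the cleanest route is probably to note that $X$ is a local diffeomorphism onto its image by step (3), that $M$ is closed (compact without boundary, being the continuous image of the compact $\mathcal{W}$), and that a closed immersed hypersurface in $\mathbb{R}^{n+1}$ with everywhere positive definite second fundamental form is the boundary of a convex body — hence embedded — by the classical Hadamard-type theorem, which applies once one passes from the anisotropic $(h_{ij})$ back to the Euclidean $(\bar h_{ij})$ using $\bar h_{ij} = F^0(\bar\nu) h_{ij} > 0$ as in Proposition \ref{convex}. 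The remaining verifications (regularity bootstrap to $C^2$, attainment of the supremum) are routine.
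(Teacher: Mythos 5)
Your proposal is correct and follows essentially the same route as the paper: the map you define by the linear system $G(z)(z,X(z))=S(z)$, $G(z)(\nabla_{e_i}z,X(z))=S_i(z)$ is exactly the paper's $y(z)=S_i(z)e_i(z)+S(z)e_{n+1}(z)$ (the $G$-gradient of the $1$-homogeneous extension of $S$), and the key computation $e_j(X)=(S_{ij}-\tfrac12Q_{ijk}S_k+\delta_{ij}S)e_i$ together with positivity is precisely how the paper gets the immersion, the identification of $z$ as the anisotropic normal, and strong convexity. Your extra care about embeddedness (via the Hadamard-type argument) and about the supremum being attained at $X(z)$ only fills in details the paper leaves implicit.
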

\begin{proof}
We extend $S$ to be a homogeneous function of degree one in $\mathbb{R}^{n+1}\setminus\{0\}$ by setting $S(x)=F(x)S\left(\frac{x}{F(x)}\right).$ Denote by $\nabla_{(\mathbb{R}^{n+1},G)}$ be the covariant derivative of $\mathbb{R}^{n+1}$ equipped with the metric $G$. Define 
\begin{eqnarray*}
M=\big\{\nabla_{(\mathbb{R}^{n+1},G)} S(x)|x\in \mathbb{R}^{n+1}\setminus\{0\}\big\}.
\end{eqnarray*}
Let $e_{n+1}=z$ be the position vector of $\mathcal{W}$ and $\{e_1,\cdots,e_n\}$ is a local orthonormal frame field with respect to $g$ on $\mathcal{W}$ such that  $\{e_1,\cdots,e_{n+1}\}$ is a positive oriented  orthonormal frame field with respect to $G$ in $\mathbb{R}^{n+1}$. 
Then it follows from the homogeneity of $S$ that for $y\in M$, there exists $z\in \mathcal{W}$, such that
\begin{eqnarray}\label{y}
y=y(z)=\nabla_{(\mathbb{R}^{n+1},G)} S(z)=\nabla_{e_i} S(z)e_i(z)+S(z) e_{n+1}(z) .
\end{eqnarray}
It is clear that
\begin{eqnarray}\label{cc}
e_i(e_{n+1})=e_i,\quad e_i(e_j)=\nabla_{e_i}e_j-\frac12 Q_{ijk}e_k-\delta_{ij}e_{n+1}.
\end{eqnarray}
Using \eqref{cc}, we compute the derivative of $y$ on $\mathcal{W}$ ( at normal coordinates, namely, $\nabla_{e_i} e_j=0$),
\begin{eqnarray}\label{xx}
e_j(y)&=&e_je_i( S) e_i+v_i e_j(e_i)+e_j( S)e_{n+1}+Se_j(e_{n+1})\\
&=&S_{ij}e_i+S_i(-\frac12 Q_{ijk}e_k-\delta_{ij}e_{n+1})+S_j e_{n+1}+Se_j\nonumber\\
&=&(S_{ij}-\frac12 Q_{ijk}S_k+\delta_{ij} S)e_i.\nonumber
\end{eqnarray}
Since $(S_{ij}-\frac12 Q_{ijk}S_k+\delta_{ij} S)>0$ by assumption, \eqref{xx} implies that the tangent space of $M$ at $y(z)$ is $\hbox{span}\{e_1(z),\cdots,e_n(z)\}$. Hence $e_{n+1}(z)=z$ is the anisotropic normal at $y(x)$. Now $\{e_1,\cdots,e_n,z\}$ gives an orientation of $M$. Also the map $y(z)=\nabla_{e_i} S(z)e_i(z)+S(z) e_{n+1}(z)$ is globally invertible and $M$ is an embedded hypersurface in $\mathbb{R}^{n+1}$.

In view of \eqref{y}, $S(z)=G(z)(z,y(z))$. It follows from the previous computation that the anisotropic  principle curvatures at $y(z)$ are the reciprocals of the eigenvalues of $(S_{ij}-\frac12 Q_{ijk}S_k+\delta_{ij} S)>0$. Therefore, $M$ is strongly convex and $S$ is its anisotropic support function.
\end{proof}

By virtue of Proposition \ref{P1} and \ref{P2},  the solvability of the  anisotropic Minkowski problem is reduced to that of the equation \eqref{eq}
on the Wulff shape $\mathcal{W}$ under the condition that $K\in C^k(\mathcal{W}),k\geq2$, $K>0$ and satisfies the equation \eqref{necc}.  Therefore, to prove  Theorem \ref{thm},  it is equivalent to prove the solvability of the equation \eqref{eq}.

\begin{definition} We call a solution $S$ of \eqref{eq} is an admissible solution if  the $n\times n$ matrix $S_{ij}-\frac12Q_{ijk}S_k+S\delta_{ij}$ is positive definite.
\end{definition}

\begin{theorem}\label{thm1}
Let $F$ be a Minkowski norm in $\mathbb{R}^{n+1}$.   Let $K$ be a positive function in $C^k(\mathcal{W})$  with $k\geq 2$ and satisfy the condition \eqref{necc}.  Then we can find an admissible solution $S\in C^{k+1,\alpha}(\mathcal{W})(\forall 0<\alpha<1)$ to the equation \eqref{eq}. If there exist two admissible solutions $S$ and $\tilde{S}$ to \eqref{eq}, then there exist some constants $c_1,\cdots,c_{n+1}$, such that $$S(z)-\tilde{S}(z)=\sum_{\alpha=1}^{n+1} c_{\alpha} G(z)(z,E^\alpha).$$
\end{theorem}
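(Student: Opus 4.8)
The plan is to solve \eqref{eq} by the method of continuity, and to read off the uniqueness statement from the linearized operator. Since $\mathcal W$ is itself a strongly convex hypersurface whose anisotropic Gauss--Kronecker curvature is identically $1$ and whose anisotropic support function is $S\equiv1$ (Example \ref{exam}), the divergence-theorem computation that establishes \eqref{necc}, applied to $M=\mathcal W$, gives $\int_{\mathcal W}G(z)(z,E^\beta)\,d\mu=0$. Hence, setting $\tfrac{1}{K_t}:=(1-t)+\tfrac{t}{K}$ for $t\in[0,1]$, the functions $K_t$ are positive, lie in $C^k(\mathcal W)$ with uniform bounds, and still satisfy \eqref{necc}; moreover $K_0\equiv1$ is solved by the admissible $S\equiv1$, while $K_1=K$ is the given datum. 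Write $M(S):=\bigl(S_{ij}-\tfrac12 Q_{ijk}S_k+S\delta_{ij}\bigr)$. Because \eqref{eq} is unchanged when an arbitrary element of the $(n+1)$-dimensional space $\mathcal N:=\operatorname{span}\{G(z)(z,E^\beta):\beta=1,\dots,n+1\}$ (which parametrizes translations of the hypersurface) is added to $S$, I would remove this freedom by working in the Banach spaces $B_1:=\{S\in C^{k+1,\alpha}(\mathcal W):\int_{\mathcal W}S\,G(z)(z,E^\beta)\,d\mu=0,\ \beta=1,\dots,n+1\}$ and $B_2:=\{\rho\in C^{k-1,\alpha}(\mathcal W):\int_{\mathcal W}\rho\,G(z)(z,E^\beta)\,d\mu=0,\ \beta=1,\dots,n+1\}$. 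One checks that $S\equiv1\in B_1$; that $\tfrac{1}{K_t}\in B_2$, since $\int_{\mathcal W}G(z)(z,E^\beta)\tfrac{1}{K_t}\,d\mu=(1-t)\cdot0+t\cdot0=0$; and, since every admissible $S$ is the anisotropic support function of a closed strongly convex hypersurface (Proposition \ref{P2}) with anisotropic Gauss--Kronecker curvature $1/\det M(S)$, that the computation establishing \eqref{necc} gives $\det M(S)\in B_2$ for every admissible $S$. Thus $\mathcal F(S,t):=\det M(S)-\tfrac{1}{K_t}$ carries admissible elements of $B_1$ into $B_2$.

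\textbf{Openness.} Let $T:=\{t\in[0,1]:\ \det M(S)=\tfrac{1}{K_t}\text{ has an admissible solution }S\in B_1\}$, so $0\in T$. At a solution $(\bar S,\bar t)$ the differential $D_S\mathcal F(\bar S,\bar t)$ is the linearized operator $L_{\bar S}u=\frac{\partial\det M}{\partial M_{ij}}(M(\bar S))\bigl(u_{ij}-\tfrac12 Q_{ijk}u_k+u\delta_{ij}\bigr)$, which is uniformly elliptic because its coefficient matrix is the cofactor matrix of the positive matrix $M(\bar S)$. By Lemma \ref{adjoint} it is self-adjoint with respect to $d\mu$, and by Lemma \ref{ker} its kernel is exactly $\mathcal N$; since $\mathcal N\cap B_1=\{0\}$ (the Gram matrix $\bigl(\int_{\mathcal W}G(z)(z,E^\alpha)G(z)(z,E^\beta)\,d\mu\bigr)$ is positive definite) and, by the Fredholm alternative for self-adjoint elliptic operators, the range of $L_{\bar S}$ is precisely $B_2$, the restriction $L_{\bar S}\colon B_1\to B_2$ is a Banach-space isomorphism. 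The implicit function theorem then produces a $C^1$ family of admissible solutions in $B_1$ through $\bar S$, so $T$ is open.

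\textbf{Closedness, existence, and the main obstacle.} For a normalized admissible solution of $\det M(S)=\tfrac{1}{K_t}$ I would call on the a priori estimates of Section 4: a $C^0$ bound from the uniform upper bound of the anisotropic outer radius; a uniform positive lower bound of the anisotropic inner radius, obtained by combining Andrews' inequality with a lower bound of the anisotropic outer radius; and $C^1$ and $C^2$ bounds, together with $M(S)\ge c_0 I>0$, via the auxiliary functions $W=\log|\nabla S|^2+e^{\alpha(m_2-S)}$ and $\Phi=\log(a+\Delta S)+e^{\beta(m_2-S)}$ --- all of this uniform in $t$ because the $K_t$ are uniformly controlled. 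Since $\log\det M(S)=-\log K_t$ is a concave, uniformly elliptic equation, the Evans--Krylov theorem upgrades the $C^2$ bound to a uniform $C^{2,\alpha}$ bound, and a standard Schauder bootstrap then gives a uniform $C^{k+1,\alpha}$ bound for every $\alpha\in(0,1)$. Consequently, if $t_j\in T$ with $t_j\to t_\infty$, a subsequence of the corresponding solutions converges in $C^{k+1,\alpha'}$ ($\alpha'<\alpha$) to an admissible $S_\infty\in B_1$ solving $\det M(S_\infty)=\tfrac{1}{K_{t_\infty}}$, which inherits the $C^{k+1,\alpha}$ bound; hence $t_\infty\in T$, so $T=[0,1]$, and $t=1$ furnishes the desired admissible $S\in C^{k+1,\alpha}(\mathcal W)$. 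I expect this to be the main obstacle: unlike in the classical Minkowski problem, \eqref{eq} carries the first-order term $-\tfrac12 Q_{ijk}S_k$ --- nonvanishing, since $F$ is not quadratic, cf.\ \eqref{Q1} --- and its Gauss equation \eqref{Gausseq} is considerably more involved, so the $C^2$ estimate must be preceded by a $C^1$ estimate and pushed through carefully designed auxiliary functions, and even the inner-radius bound seems to require a genuinely new argument.

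\textbf{Uniqueness.} Given admissible solutions $S,\tilde S$ of \eqref{eq}, put $\psi:=S-\tilde S$ and $\phi(t):=tS+(1-t)\tilde S$. Since $M(\cdot)$ is linear and $\{M>0\}$ is convex, each $\phi(t)$ is admissible, and integrating $\tfrac{d}{dt}\det M(\phi(t))$ over $t\in[0,1]$ (using that $M(\cdot)$ is linear, so $\tfrac{d}{dt}M(\phi(t))=M(\psi)$) gives
\[0=\det M(S)-\det M(\tilde S)=\bar W^{ij}\bigl(\psi_{ij}-\tfrac12 Q_{ijk}\psi_k+\psi\delta_{ij}\bigr)=:\bar L\psi,\]
where $\bar W^{ij}:=\int_0^1\frac{\partial\det M}{\partial M_{ij}}(M(\phi(t)))\,dt$ is positive definite as an average of positive definite cofactor matrices. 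Since the bracket $\psi_{ij}-\tfrac12 Q_{ijk}\psi_k+\psi\delta_{ij}$ does not depend on $t$, one has $\bar L=\int_0^1 L_{\phi(t)}\,dt$, so $\bar L$ has exactly the structure to which Lemmas \ref{adjoint} and \ref{ker} apply; hence $\bar L$ is self-adjoint with respect to $d\mu$ with kernel $\mathcal N$, and $\bar L\psi=0$ forces $\psi\in\mathcal N$, that is, $S(z)-\tilde S(z)=\sum_{\alpha=1}^{n+1}c_\alpha G(z)(z,E^\alpha)$ for suitable constants $c_\alpha$.
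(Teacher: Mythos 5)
Your existence argument (continuity path $1/K_t=t/K+(1-t)$, normalization by orthogonality to $\mathcal N$, openness via self-adjointness of $L_S$, the kernel characterization and the Fredholm alternative, closedness via the Section 4 estimates plus Evans--Krylov and bootstrapping) is essentially the paper's proof, modulo the harmless choice of H\"older rather than Sobolev spaces in the openness step.

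The uniqueness argument, however, has a genuine gap. You set $\bar L=\int_0^1 L_{\phi(t)}\,dt$ with $\phi(t)=tS+(1-t)\tilde S$ and assert that Lemma \ref{ker} applies to $\bar L$, so that $\bar L\psi=0$ forces $\psi\in\mathcal N$. Lemma \ref{ker} is proved only for the linearization $L_S$ at a \emph{single} admissible function $S$: its proof builds the $(n-1)$-form $\bar\Omega=X\wedge Z\wedge dZ\wedge dX\wedge\cdots\wedge dX$ from that one $S$, uses $L_S(v)=0$ pointwise to kill the term $dX\wedge Z\wedge dZ\wedge dX\wedge\cdots\wedge dX$, and then runs the Cheng--Yau sign argument. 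For your averaged operator the coefficient matrix $\bar W^{ij}=\int_0^1\frac{\partial\det M}{\partial M_{ij}}(M(\phi(t)))\,dt$ is not the cofactor matrix of $M(w)$ for any single function $w$ (the cofactor map is nonlinear for $n\geq 3$), and $\bar L\psi=0$ does not give $L_{\phi(t)}\psi=0$ for any individual $t$, so neither Lemma \ref{ker} nor its proof applies as stated; only the inclusion $\mathcal N\subseteq\ker\bar L$ is automatic. Ellipticity and self-adjointness of $\bar L$ (which do hold, the latter because Lemma \ref{adjoint} is valid for every $L_{\phi(t)}$ and integration in $t$ is linear) are not enough to pin down the kernel, since the zeroth-order coefficient $\operatorname{tr}\bar W$ is positive and the maximum principle is unavailable. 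In effect, the statement you need about $\ker\bar L$ is the infinitesimal form of the equality case in a Minkowski-type inequality, and that is exactly what the paper proves by a different route: it uses the self-adjointness identities to derive \eqref{unieq3}, applies G\aa rding's inequality $\sigma_n(U,\dots,U,\tilde U)\geq\sigma_n^{1/n}(\tilde U)\sigma_n^{(n-1)/n}(U)$ together with the antisymmetry of the right-hand side of \eqref{unieq3} to conclude $U=\tilde U$ pointwise, and only then invokes Lemma \ref{ker} for the single operator $L_S$ applied to $S-\tilde S$ (which is legitimate because $M(S-\tilde S)=U-\tilde U=0$). You should either adopt that mixed-discriminant argument or supply an independent proof that $\ker\bar L=\mathcal N$.
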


We will use  the method of continuity to find an admissible solution of \eqref{eq}. In the next section, we prove the a priori estimates for the equation \eqref{eq}.

\

\section{A priori estimates}

In this section, we shall establish the  a priori estimates for the admissible solution to the equation \eqref{eq}. We will frequently use the symmetric function $\sigma_n(u_{ij})=\det(u_{ij})$.

In view of the Gauss formula \eqref{Gauss},  we have for $z\in\mathcal{W}\subset\mathbb{R}^{n+1}$,
\begin{eqnarray*}
z_{ij}=-z\delta_{ij}-\frac12Q_{ijk}z_k.
\end{eqnarray*}
Hence \begin{eqnarray*}
(G(z)(z,E^\alpha))_{ij}=\frac12Q_{ijk}(G(z)(z,E^\alpha))_k-G(z)(z,E^\alpha)\delta_{ij},
\end{eqnarray*}
which implies \begin{eqnarray}\label{kernel}
L_S(G(z)(z,E^\alpha))=0.
\end{eqnarray}

Hence  for constants $\{a_\alpha\}_{\alpha=1}^{n+1}$, the  function $L(z)=a_\alpha G(z)(z,E^\alpha)$, $z\in \mathcal{W}\subset\mathbb{R}^{n+1}$, satisfies
\begin{eqnarray*}
L_{ij}-\frac12Q_{ijk}L_k+L\delta_{ij}=0.
\end{eqnarray*}
Thus for a solution $S$ of \eqref{eq}, $S+L$ is also a solution.  Such an observation allows us to restrict $S$ to satisfy the following orthogonal condition
\begin{eqnarray}\label{ortho}
\int_\mathcal{W} G(z)(z,E^\alpha) S(z)d\mu=0\quad \forall\alpha=1,2,\cdots, n+1.
\end{eqnarray}
This orthogonal condition means that the origin lies in the interior of the convex body  enclosed by $M$.

Under the restriction \eqref{ortho}, we are able to prove the following a priori estimates.
\begin{theorem}\label{a pri}
For each integral $k\geq 1$ and $\alpha\in (0,1)$, there exist a constant $C$, depending on
$n,k,\alpha, \|K\|_{C^k(\mathcal{W})}, inf_{\mathcal{W}} K,\|F\|_{C^{k+3}(\mathcal{W})}$,
 such that 
\begin{eqnarray*}
\|S\|_{C^{k+1,\alpha}(\mathcal{W})}\leq C,
\end{eqnarray*}
for all admissible solutions of \eqref{eq} satisfying the condition \eqref{ortho}.
\end{theorem}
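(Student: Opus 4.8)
I would prove Theorem~\ref{a pri} by the standard hierarchy of estimates: first the $C^0$ estimate, then a positive lower bound for the ``inner radius'' (equivalently a positive lower bound for $S$ and for the eigenvalues of $w_{ij}:=S_{ij}-\frac12 Q_{ijk}S_k+S\delta_{ij}$), then the $C^1$ estimate, then the $C^2$ estimate, and finally the higher-order estimates via Evans--Krylov and Schauder theory.

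\emph{Step 1: $C^0$ estimate (upper bound for $S$).} Since $S(z)=G(z)(z,X(z))$ is the anisotropic support function, $\max_{\mathcal W}S$ is controlled by the anisotropic outer radius of $M$. Following the idea of Cheng--Yau~\cite{CY} adapted to the anisotropic setting, I would integrate the equation \eqref{eq} against suitable test functions. Concretely, at a point $z_0$ where $S$ attains its maximum $m_2$ one has $w_{ij}(z_0)\le m_2\delta_{ij}$, hence $\frac{1}{K(z_0)}=\det w_{ij}(z_0)\le m_2^n$, giving a lower bound $m_2\ge (\inf K)^{-1/n}$ — but I need the \emph{upper} bound. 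For that, integrate $\frac1K\,d\mu=\det(w_{ij})\,dV_g\cdot(\text{density})$ and use the divergence/mixed-volume identities together with the orthogonality condition \eqref{ortho} (which forces the origin inside $M$) to bound $\int_{\mathcal W} S\,d\mu$, then upgrade to a pointwise bound using convexity: for a convex body containing the origin, $\max S \le c(n,F)\int S\,d\mu$ up to the geometry of $\mathcal W$. This is exactly where Cheng--Yau's argument gets transplanted; the anisotropic volume form $d\mu=\varphi\,dV_g$ and Lemma~\ref{volume} are the tools that let the integration by parts go through.

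\emph{Step 2: positive lower bound for $S$ (inner radius).} As the introduction indicates, here I would \emph{not} follow Cheng--Yau but instead combine an inequality of Andrews~\cite{An} with the upper bound on the outer radius from Step 1 to get an explicit positive lower bound $m_1$ on the anisotropic inner radius, hence $S\ge m_1>0$. The point is that for a convex body the inner and outer radii are linked through $K$ via the equation, and Andrews' inequality furnishes the missing quantitative control. Once $m_1\le S\le m_2$ are in hand, combined with $\det w_{ij}=1/K$ bounded above and below, one gets two-sided bounds on the eigenvalues of $w_{ij}$ \emph{provided} one also controls $\sum w_{ii}=\Delta S + S n$, i.e.\ the $C^2$ bound — so this is not yet closed.

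\emph{Steps 3--4: $C^1$ and $C^2$ estimates.} For $C^1$ I would use the auxiliary function $W=\log|\nabla S|^2+e^{\alpha(m_2-S)}$ (with $\alpha$ large depending on the $C^1$-norm of $Q$, i.e.\ on $\|F\|_{C^3}$, and on $m_2$) and apply the maximum principle: differentiating \eqref{eq} once and contracting with the linearized coefficients $w^{ij}:=\frac{\partial \sigma_n}{\partial w_{ij}}$, the good term $e^{\alpha(m_2-S)}w^{ij}S_iS_j$ (using $w^{ij}>0$ and the positive lower bound on its trace coming from $\det w=1/K$ and the eigenvalue control) dominates the gradient terms generated by the $Q_{ijk}S_k$ piece and by $\nabla K$. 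For $C^2$ I would use $\Phi=\log(a+\Delta S)+e^{\beta(m_2-S)}$ in the spirit of Yau~\cite{Ya} and Guan--Li~\cite{GL}: differentiate \eqref{eq} twice, use the concavity of $\log\sigma_n$ on positive matrices, use the Gauss equation \eqref{Gausseq1} to handle commutators of covariant derivatives on $\mathcal W$ (this is where the curvature of $(\mathcal W,g)$ and the extra $Q\ast Q$ terms enter, and where the $C^2$-bound on $F$, i.e.\ $C^{k+3}$ for $k$ large, is consumed), and absorb the resulting lower-order and gradient terms using the already-established $C^0$ and $C^1$ bounds and the large constant $\beta$. The outcome is an upper bound on $\Delta S$, hence (with Step 2) uniform ellipticity: $0<\lambda I\le w_{ij}\le\Lambda I$.

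\emph{Step 5: higher regularity.} With \eqref{eq} now uniformly elliptic and concave ($\log\det$ is concave), the Evans--Krylov theorem gives a $C^{2,\alpha}$ estimate, and then differentiating the equation and bootstrapping with interior Schauder estimates yields $\|S\|_{C^{k+1,\alpha}}\le C$ with the stated dependence, since $K\in C^k$ and $Q\in C^{k+1}$ (coming from $F\in C^{k+3}$).

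\emph{Main obstacle.} I expect the $C^2$ estimate to be the crux. Unlike the classical Minkowski problem, \eqref{eq} has the genuine first-order term $-\frac12 Q_{ijk}S_k$ and the background metric $g$ on $\mathcal W$ is not of constant curvature, so the Simons-type identity one gets from differentiating twice carries uncontrolled-looking terms involving $\nabla Q$, $Q\ast Q$, and the full Riemann tensor $R_{ijkl}$ of $(\mathcal W,g)$ as in \eqref{Gausseq1}. Making the auxiliary function $\Phi$ actually dominate all of these — in particular handling the third-derivative terms $w^{ij,pq}(\nabla_k w_{ij})(\nabla_k w_{pq})$ from concavity and the bad terms of the form $w^{ij}\nabla_k Q_{\cdot}\cdot S$ — while keeping the constant $\beta$ admissible, is the delicate part; the $C^1$ estimate must be established \emph{first} precisely so that these gradient terms are a priori bounded.
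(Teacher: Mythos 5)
Your proposal follows essentially the same route as the paper: the $C^0$ bound via the Minkowski formula and the anisotropic isoperimetric inequality, the inner-radius lower bound via Andrews' inequality, the $C^1$ and $C^2$ estimates with exactly the auxiliary functions $W=\log|\nabla S|^2+e^{\alpha(m_2-S)}$ and $\Phi=\log(a+\Delta S)+e^{\beta(m_2-S)}$, and finally Evans--Krylov plus Schauder. The only slip is the aside at the start of Step 1 (the maximum-point argument gives a lower, not upper, bound on $m_2$), which you correctly abandon, so the argument as a whole is sound and matches the paper.
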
 

We remark that  the norm  of $S$ in this paper are all with respect to the metric $g=G|_\mathcal{W}$ on $\mathcal{W}$.

\subsection{$C^0$ estimate} 
We first establish a uniform positive lower bound and upper bound for $S$. 
Since \eqref{ortho} means that the origin lies in the interior of the convex body  enclosed by $M$, in order to obtain the bounds for $S$, it is sufficient to find the bound for the anisotropic inner and outer radius of $M$ relative to $\mathcal{W}$.

The anisotropic inner radius of $M$ relative to $\mathcal{W}$ is defined as
\begin{eqnarray*}
&& r(M):=\sup\{ t>0: t\mathcal{W}+y\subset\mathcal{K}\hbox{ for some }y\in\mathbb{R}^{n+1}\},
\end{eqnarray*}
and the anisotropic outer radius
of $\mathcal{K}$ relative to $\mathcal{W}$ is defined as
\begin{eqnarray*}
&& R(M):=\inf\{ t>0:  \mathcal{K}\subset t\mathcal{W}+y  \hbox{ for some }y\in\mathbb{R}^{n+1}\}.
\end{eqnarray*}

\begin{lemma}\label{lem1}
Let $M$ be a compact convex $C^2$ hypersurface in $\mathbb{R}^{n+1}$ and $K$ be its anisotropic Gauss-Kronecker curvature function defined on $\mathcal{W}$. Then 
\begin{eqnarray*}
 && \frac12 m_1\leq r(M)\leq R(M)\leq \frac12 m_2,
\end{eqnarray*}
where \begin{eqnarray*}
m_1=2|\mathcal{W}|^{-\frac{1}{n}}\left(\int_\mathcal{W} \frac{1}{K(z)} d\mu\right)^\frac{n+1}{n}\left(\inf_{y\in\mathcal{W}}\int_\mathcal{W}\frac{1}{K(z)} \max\{0,G(z)(z,y)\} d\mu \right)^{-1},
\end{eqnarray*}
\begin{eqnarray*}
&& m_2=2\frac{1}{n+1}|\mathcal{W}|^{-\frac{1}{n}}\left(\int_\mathcal{W}\frac{1}{K(z)}  d\mu\right)^{-1}\cdot\left(\inf_{y\in\mathcal{W}}\int_\mathcal{W}\frac{1}{K(z)} \max\{0,G(z)(z,y)\} d\mu \right)^{\frac{n+1}{n}},
\end{eqnarray*}
and $|\mathcal{W}|$ is the standard $n$-dimensional volume of $\mathcal{W}$.
In particular,
if $S$ is an admissible solution of \eqref{eq} on $\mathcal{W}$ and satisfies \eqref{ortho}, then
\begin{eqnarray*}
0<m_1\leq S\leq m_2.
\end{eqnarray*}
\end{lemma}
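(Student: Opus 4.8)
The plan is to establish the geometric statement $\frac12 m_1 \le r(M) \le R(M) \le \frac12 m_2$ first, and then read off the $C^0$ bound $m_1 \le S \le m_2$ from it, using that the orthogonality condition \eqref{ortho} forces the origin to sit inside $\bar M$. The reduction to the support function is routine: once the origin is interior, the anisotropic support function $S$ is positive, and one sees that $S \ge r(M)\cdot\inf_{z\in\mathcal W}(\text{something controlled})$ and $S \le R(M)\cdot(\cdots)$; more precisely, since $S(z) = G(z)(z,X(z))$ and $X(z)$ lies in $\bar M$, translating so the inscribed/circumscribed Wulff shape is centered, $S$ is pinched between constants comparable to $r(M)$ and $R(M)$. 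The bulk of the work is therefore the two radius estimates.

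For the \emph{outer} radius bound I would adapt Cheng--Yau's argument (as the author signals in the introduction). Pick a direction $y \in \mathcal W$ realizing the diameter of $\bar M$ in the relative metric, so that $R(M)$ is essentially the width of $\bar M$ in that direction. Write the relative volume $V(\bar M)$ and the relative surface area $|M|_F = \int_{\mathcal W} \frac{1}{K}\,d\mu$ (this is exactly the total relative area, since $d\mu = \frac{1}{K} d\mu$ pulled back to $\mathcal W$ via Proposition \ref{P1} and \eqref{vol}). A cone-type or slab-type comparison gives a lower bound for $\int_{\mathcal W}\frac{1}{K}\max\{0,G(z)(z,y)\}\,d\mu$ in terms of the cross-sectional area times the width, and an isoperimetric-type inequality relating $V(\bar M)$, the relative area, and $R(M)$; combining these yields $R(M) \le \frac12 m_2$. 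For the \emph{inner} radius, rather than chasing Cheng--Yau, I would use the explicit trick the author advertises: combine an inequality of Andrews \cite{An} bounding $r(M)$ below in terms of relative area and the outer radius with the just-proved (rough, positive) lower bound on $R(M)$ coming from the fact that $\int\frac1K d\mu > 0$. Concretely, $r(M) \cdot (\text{relative area of } M) \gtrsim$ (relative volume) and relative volume $\gtrsim R(M)^{?}\cdot(\cdots)$, which after rearranging gives $r(M)\ge \frac12 m_1$ with the stated explicit constant.

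The key inputs, all available from the excerpt, are: the identification of $d\mu$ on $\mathcal W$ with $\frac1K\,dz$-type measure via Proposition \ref{P1}; the divergence-theorem computation in Section 3 showing $\int_{\mathcal W} G(z)(z,E^\alpha)\frac1K\,d\mu = 0$, which is the centering statement; and convexity of $\bar M$ so that support-function/width comparisons are legitimate. I would organize the proof as: (1) express all relative-geometric quantities ($|M|_F$, $V(\bar M)$, widths) as integrals over $\mathcal W$ against $\frac1K d\mu$; (2) prove the upper bound $R(M)\le \frac12 m_2$ by the volume/area comparison; (3) prove the lower bound $r(M) \ge \frac12 m_1$ via Andrews' inequality plus step (2); (4) deduce $m_1 \le S \le m_2$ from steps (2)--(3) and the centering from \eqref{ortho}.

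\textbf{Main obstacle.} The delicate point is getting the \emph{explicit} constants $m_1, m_2$ right — in particular controlling the geometric comparison inequalities (between relative volume, relative surface area, and the anisotropic width functional $\int_{\mathcal W}\frac1K\max\{0,G(z)(z,y)\}d\mu$) purely in terms of data on $\mathcal W$, without Euclidean structure leaking in. Because the metric $G$ varies from point to point and $\mathcal W$ need not be an ellipsoid, one must be careful that the slab/cone comparisons used in the classical Cheng--Yau argument still hold with the anisotropic area element; I expect this, rather than the support-function reduction, to absorb most of the effort, and it is presumably where Andrews' inequality is invoked to sidestep the hardest part of Cheng--Yau's original lower-bound argument.
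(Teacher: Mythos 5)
Your skeleton is the paper's: bound $R(M)$ via a volume--area comparison, get $r(M)$ from Andrews' Diskant-type inequality plus a lower volume bound, then read off $2r(M)\le S\le 2R(M)$. But as written the sketch has gaps at exactly the quantitative steps. First, the linchpin you never state is the Minkowski-type identity $\int_{\mathcal W} \frac{1}{K}S\,d\mu=(n+1)\mathrm{Vol}(\bar M)$, which follows from \eqref{vol} and $\Omega(X,\partial_1,\dots,\partial_n)=G(\nu)(\nu,X)\,\Omega(\nu,\partial_1,\dots,\partial_n)$; without it the chain from the support function to the volume does not close. Second, the outer-radius step is \emph{not} a cone/slab comparison bounding $\int \frac1K\max\{0,G(z)(z,y)\}\,d\mu$ from below by ``cross-sectional area times width'' --- that integral is simply retained inside the constants $m_1,m_2$. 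What one actually needs is the elementary pointwise inequality $S(z)\ge R_0\max\{0,G(z)(z,\bar p_0)\}$, where $p_0$ is the point of $M$ maximizing $F$ and $R_0=F(p_0)\ge R(M)$; integrating this against $\frac1K\,d\mu$, invoking the Minkowski identity, and then Busemann's anisotropic isoperimetric inequality $\mathrm{Vol}(\bar M)\le\frac{1}{n+1}|\mathcal W|^{-1/n}|M|_F^{(n+1)/n}$ gives the upper bound. Pursuing a slab comparison instead would both be harder and produce different constants.

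Third, for the inner radius your claim that the ``rough, positive lower bound on $R(M)$'' comes from $\int\frac1K\,d\mu>0$ is not right. Andrews' inequality is $r(M)\ge \mathrm{Vol}(\bar M)/|M|_F$ (volume over relative area, not ``relative area and outer radius''), so what you must bound from below is $\mathrm{Vol}(\bar M)$. This is done by combining the containment $\mathrm{Vol}(\bar M)\le\frac{1}{n+1}|\mathcal W|R(M)^{n+1}$ with the already established chain $(n+1)\mathrm{Vol}(\bar M)=\int\frac1K S\,d\mu\ge R_0\int\frac1K\max\{0,G(z)(z,\bar p_0)\}\,d\mu\ge R(M)\inf_y\int\frac1K\max\{0,G(z)(z,y)\}\,d\mu$, which yields $R(M)^n\ge|\mathcal W|^{-1}\inf_y\int\frac1K\max\{0,G(z)(z,y)\}\,d\mu$ and hence the explicit lower bound on $\mathrm{Vol}(\bar M)$. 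So the dependence is on the width-type infimum, not merely on positivity of the total relative area. With these three corrections your outline does reproduce the proof; the final reduction $2r(M)\le S\le 2R(M)$ you describe is as in the paper.
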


\begin{proof}
Since the origin lies in the interior of $M$, we can find $p_0\in M$ with $R_0:=F(p_0)=\max_{p\in M} F(p)$, set $\bar{p}_0=\frac{p_0}{F(p_0)}\in \mathcal{W}$. It is easy to see that $M\subset R_0\mathcal{W}$. Hence $R(M)\leq R_0.$
The support function $S$ at $z\in \mathcal{W}$ satisfies \begin{eqnarray*}
&& S(z)=\sup_{y\in M} G(z)(z,y)\geq \max\{0,G(z)(z,p_0)\}=R_0\max\{0,G(z)(z,\bar{p}_0)\}.
\end{eqnarray*}
Denote $f=\frac{1}{K}$. By multiplying $f$ and integrating over $\mathcal{W}$, we have
\begin{eqnarray}\label{C0eq1}
&& R(M)\leq R_0\leq \left(\int_\mathcal{W} f(z)S(z) d\mu\right)\left(\int_\mathcal{W} f(z)\max\{0,G(z)(z,\bar{p}_0)\} d\mu \right)^{-1}.
\end{eqnarray}
In view of \eqref{vol}, we have a Minkowski formula,
\begin{eqnarray}\label{C0eq2}
&&\int_\mathcal{W} f(z)S(z) d\mu = \int_M S(\nu(X)) d\mu(M)\\&=& \int_M G(\nu)(\nu,X) \Omega(\nu,\partial_1,\cdots,\partial_n)dy^1\cdots dy^n\nonumber\\&=&\int_M  \Omega(X,\partial_1,\cdots,\partial_n)dy^1\cdots dy^n=(n+1)Vol(\bar{M})\nonumber.
\end{eqnarray}
The anisotropic isoperimetric inequality (see Busemann \cite{Bu}) tells that
\begin{eqnarray}\label{C0eq3}
&& Vol(\bar{M})\leq \frac{1}{n+1}|\mathcal{W}|^{-\frac{1}{n}}|M|_F^\frac{n+1}{n}=\frac{1}{n+1}|\mathcal{W}|^{-\frac{1}{n}}\left(\int_\mathcal{W} f(z) d\mu\right)^\frac{n+1}{n}.
\end{eqnarray}

Combining \eqref{C0eq1}, \eqref{C0eq2} and \eqref{C0eq3}, we obtain the upper bound of $R(M)$,
\begin{eqnarray*}
&& R(M)\leq |\mathcal{W}|^{-\frac{1}{n}}\left(\int_\mathcal{W} f(z) d\mu\right)^\frac{n+1}{n}\left(\inf_{y\in\mathcal{W}}\int_\mathcal{W} f(z)\max\{0,G(z)(z,y)\} d\mu \right)^{-1}.
\end{eqnarray*}

Next we want to find the positive lower bound of $r(M)$. Since $\bar{M}$ is enclosed in a rescaled Wulff shape with radius $R(M)$, we have 
\begin{eqnarray}\label{C0eq8}
&&Vol(\bar{M})\leq \frac{1}{n+1}|\mathcal{W}|R(M)^{n+1}.
\end{eqnarray}
It follows from \eqref{C0eq1}, \eqref{C0eq2} and \eqref{C0eq8} that
\begin{eqnarray}\label{C0eq10}
&&R(M)\geq |\mathcal{W}|^{-\frac{1}{n}}\left(\inf_{y\in\mathcal{W}}\int_\mathcal{W} f(z)\max\{0,G(z)(z,y)\} d\mu \right)^{\frac{1}{n}},
\end{eqnarray}
and then
\begin{eqnarray}\label{C0eq11}
&&Vol(\bar{M})\geq \frac{1}{n+1}|\mathcal{W}|^{-\frac{1}{n}}\left(\inf_{y\in\mathcal{W}}\int_\mathcal{W} f(z)\max\{0,G(z)(z,y)\} d\mu \right)^{\frac{n+1}{n}}.
\end{eqnarray}

Recalling an inequality by Ben Andrews \cite{An}, Proposition 5.1, which is a consequence of the Diskant inequalities, 
\begin{eqnarray}\label{C0eq9}
&& r(M)\geq \frac{Vol(\bar{M})}{|M|_F}=\frac{Vol(\bar{M})}{\int_{\mathcal{W}} f(z) d\mu}.
\end{eqnarray}

Combining  \eqref{C0eq11} and \eqref{C0eq9},
we get the positive lower bound of $r(M)$,\begin{eqnarray*}
 r(M)\geq &&\frac{1}{n+1}|\mathcal{W}|^{-\frac{1}{n}}\left(\int_\mathcal{W} f(z) d\mu\right)^{-1}\\&&\cdot\left(\inf_{y\in\mathcal{W}}\int_\mathcal{W} f(z)\max\{0,G(z)(z,y)\} d\mu \right)^{\frac{n+1}{n}}.
\end{eqnarray*}

Now it is easy to derive the upper and positive lower bound of $S$ in terms of $r(M)$ and $R(M)$.
In fact, it follows from Schwarz inequality that 
\begin{eqnarray}
&& S(z)=G(z)(z,X(z))\leq F(z)F(X(z))\leq 2R(M),\quad\forall z\in\mathcal{W}.
\end{eqnarray}
On the other hand, for any $z\in \mathcal{W}$, let $t(z)>0$ be the number such that $t(z)z\in M$. It follows from the difinition of $r(M)$ that $2r(M)\leq \sup_{z\in\mathcal{W}} t(z)$.
Consequently, \begin{eqnarray}
&& S(z)=\sup_{y\in M} G(z)(z,y)\geq G(z)(z,t(z)z)=t(z)\geq 2r(M),\quad\forall z\in\mathcal{W}.
\end{eqnarray}\end{proof}

\subsection{$C^1$ estimate}The next step is a priori $C^1$ estimate for $S$. Such estimate is not necessary in the classical Minkowski problem, since there the $C^2$ estimate is more direct. However, for the equation \eqref{eq}, the gradient term in the determinant causes problem, which cannot be  solved until we have the $C^1$ estimate first. Therefore, in the anisotropic Minkowski problem, the $C^1$ estimate seems necessary. 

\begin{lemma}\label{lem2}
Let $S$ be an admissible solution of \eqref{eq} on $\mathcal{W}$. Let $f=\frac{1}{K}$. Then there exists a constant $C$, depending on $n, m_1,m_2, \max f, \max |\nabla f^\frac1n|, \|F\|_{C^4(\mathcal{W})}$, such that
\begin{eqnarray}
|\nabla S|\leq C.
\end{eqnarray}
\end{lemma}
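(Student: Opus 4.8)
The plan is to use the maximum principle applied to the auxiliary function $W = \log|\nabla S|^2 + e^{\alpha(m_2 - S)}$ that the author announced in the introduction, where $m_2$ is the upper bound of $S$ from Lemma \ref{lem1} and $\alpha$ is a large constant to be chosen. The idea is that at an interior maximum point $z_0$ of $W$ on $\mathcal{W}$ (which exists since $\mathcal{W}$ is compact), we have $\nabla W = 0$ and the Hessian $(\nabla^2 W)$ is negative semidefinite. Differentiating the equation \eqref{eq} once, i.e. differentiating $\log\sigma_n(h_{ij}) = \log f$ with $h_{ij} = S_{ij} - \frac12 Q_{ijk}S_k + S\delta_{ij}$, produces a linear relation involving $S_{ijk}$ contracted against the inverse matrix $h^{ij} := (h_{ij})^{-1}$; this is where the gradient term $-\frac12 Q_{ijk}S_k$ and the derivatives of $Q$ (hence the $C^4$ norm of $F$, or really $C^3$ of $\frac12F^2$) enter. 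Differentiating twice and contracting with $h^{ij}$ gives the key differential inequality for the operator $\mathcal{L} := h^{ij}\nabla_i\nabla_j$ (a positive elliptic operator since $S$ is admissible) acting on $|\nabla S|^2$.

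First I would choose a local orthonormal frame at $z_0$ diagonalizing $h_{ij}$, and compute $\mathcal{L}(\log|\nabla S|^2)$ using the commutation formulas on $\mathcal{W}$ (the curvature of $g$ is controlled by $\|F\|_{C^3}$, via the Gauss equation \eqref{Gausseq1} of Example \ref{exam}, which brings in $Q$ and $g$). The Bochner-type computation gives terms of the form $h^{ij}S_{kij}S_k$ (handled by the differentiated equation, producing $\nabla f$ against $\nabla S$, controlled by $\max|\nabla f^{1/n}|$ and $m_1$) plus a "good" positive quadratic term $h^{ij}h^{kl}S_{ik}S_{jl}$ coming from the square of second derivatives, plus "bad" terms that are linear in $S_{ij}$ times first derivatives of $S$ times the $Q$-tensor and its derivative — these are the source of the trouble. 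Then I would compute $\mathcal{L}(e^{\alpha(m_2-S)}) = \alpha^2 e^{\alpha(m_2-S)} h^{ij}S_iS_j - \alpha e^{\alpha(m_2-S)}\mathcal{L}(S)$, and note that $\mathcal{L}(S) = h^{ij}S_{ij} = h^{ij}(h_{ij} + \frac12 Q_{ijk}S_k - S\delta_{ij}) = n + \frac12 h^{ij}Q_{ijk}S_k - S\,\mathrm{tr}(h^{ij})$, so $\mathcal{L}(e^{\alpha(m_2-S)})$ contributes a genuinely positive term $\alpha^2 e^{\alpha(m_2-S)}h^{ij}S_iS_j$ plus a term involving $S\,\mathrm{tr}(h^{ij})$, which is positive since $S \geq m_1 > 0$ and $h^{ij} > 0$.

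The main obstacle, as flagged by the author, is controlling the bad terms: the most dangerous ones are roughly $h^{ij}h^{kl}(\text{stuff})_{ik}S_j$ and terms where the worst eigenvalue of $h^{ij}$ (i.e. $1/\lambda_{\min}$, which could a priori blow up) multiplies $|\nabla S|^2$. The strategy is to absorb these: using $\nabla W = 0$ at $z_0$ to substitute $\nabla(\log|\nabla S|^2) = -\alpha e^{\alpha(m_2-S)}\nabla(m_2 - S) = \alpha e^{\alpha(m_2-S)}\nabla S$, one trades second-derivative-of-$S$ terms coming from $\nabla|\nabla S|^2$ for first-derivative terms times $\alpha e^{\alpha(m_2-S)}$; the positive term $\alpha^2 e^{\alpha(m_2-S)}h^{ij}S_iS_j$ from the exponential then dominates the cross terms once $\alpha$ is chosen large (depending only on $m_1, m_2$, $\max f$, and $\|F\|_{C^4}$). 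One also uses that $\mathrm{tr}(h^{ij}) = \sum 1/\lambda_i \geq n(\det h)^{-1/n} = n f^{-1/n}$ by the AM-GM inequality, so the coefficient of $|\nabla S|^2$ coming from $S\,\mathrm{tr}(h^{ij})$ is bounded below by a positive constant — this is exactly what kills any residual term linear in $|\nabla S|^2$ with a large but bounded coefficient. Putting $0 \geq \mathcal{L}W(z_0)$ and collecting signs, one arrives at an inequality of the shape $0 \geq c_1 |\nabla S|^2(z_0) - c_2$ with $c_1 > 0$, giving $|\nabla S|^2(z_0) \leq c_2/c_1$, hence a bound on $W(z_0)$, hence on $W$ everywhere, hence on $|\nabla S|$ everywhere, with the constant depending only on the quantities listed in the statement of Lemma \ref{lem2}.
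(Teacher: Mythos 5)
Your overall scheme is exactly the paper's: the same auxiliary function $W=\log|\nabla S|^2+e^{\alpha(m_2-S)}$, the maximum principle at a point $z_0$ diagonalizing $u_{ij}=S_{ij}-\frac12Q_{ijk}S_k+S\delta_{ij}$, the substitution of $\nabla\log|\nabla S|^2=\alpha e^{\alpha(m_2-S)}\nabla S$ from $\nabla W(z_0)=0$ to trade second derivatives for first derivatives, absorption of the cross terms into $\alpha^2e^{\alpha(m_2-S)}\mathcal{F}^{ii}S_i^2$ by Cauchy--Schwarz, and the good term $\alpha S e^{\alpha(m_2-S)}\sum_i\mathcal{F}^{ii}$ with $S\geq m_1>0$ used to absorb the terms carrying a factor $\sum_i\mathcal{F}^{ii}$ (whether you linearize $\det$ or $\log\det$ only changes the operator by the bounded positive factor $f$).

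The gap is in your endgame. What the argument actually yields at $z_0$ is an inequality of the form $\sum_i\mathcal{F}^{ii}(1+S_i^2)\leq C$, \emph{not} $0\geq c_1|\nabla S|^2-c_2$. A positive term $c_1|\nabla S|^2$ with $c_1$ bounded below is simply not available at this stage: the only gradient-quadratic positive term is $\alpha^2e^{\alpha(m_2-S)}h^{ij}S_iS_j$, and a uniform lower bound on the eigenvalues of $h^{ij}=u^{ij}$ is equivalent to an upper bound on the eigenvalues of $u_{ij}$, i.e.\ to the $C^2$ estimate, which is proved only afterwards. Your appeal to $\operatorname{tr}(h^{ij})\geq nf^{-1/n}$ is also misdirected: the term $\alpha S e^{\alpha(m_2-S)}\operatorname{tr}(h^{ij})$ is merely bounded below by a positive \emph{constant}, so it cannot dominate anything growing with $|\nabla S|$; it is used to absorb the bounded-coefficient $\sum_i\mathcal{F}^{ii}$ terms, not terms of size $|\nabla S|^2$. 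The missing step, which is the paper's \eqref{eq3}, is the G\aa rding/Maclaurin inequality applied to the whole weighted sum:
\begin{equation*}
\sum_i\mathcal{F}^{ii}(1+S_i^2)\;\geq\; n\Bigl(\prod_i\mathcal{F}^{ii}(1+S_i^2)\Bigr)^{\frac1n}\;=\;n\Bigl(\prod_iu_{ii}\Bigr)^{\frac{n-1}{n}}\Bigl(\prod_i(1+S_i^2)\Bigr)^{\frac1n}\;\geq\; nf^{\frac{n-1}{n}}\bigl(1+|\nabla S|^2\bigr)^{\frac1n},
\end{equation*}
using $\prod_i\mathcal{F}^{ii}=(\det u)^{n-1}=f^{n-1}$ and $\prod_i(1+S_i^2)\geq1+|\nabla S|^2$. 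This is what converts the a priori degenerate bound on $\sum_i\mathcal{F}^{ii}(1+S_i^2)$ into a genuine bound on $|\nabla S|$ despite the fact that individual $\mathcal{F}^{ii}$ may be small. (An equivalent two-step fix: the bound on $\sum_i\mathcal{F}^{ii}$ together with $\prod_iu_{ii}=f$ forces all eigenvalues $u_{ii}$ to be bounded above and below, and only then does $\sum_i\mathcal{F}^{ii}S_i^2\leq C$ imply $|\nabla S|^2\leq C$.) Without this ingredient the proof does not close.
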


\begin{proof}
Suppose that $|\nabla S|\geq 1$, otherwise, we are done.
Denote $u_{ij}=S_{ij}- \frac12Q_{ijk}S_{k}+\delta_{ij} S$ and $\mathcal{F}^{ij}=\frac{\partial\sigma_n}{\partial u_{ij}}(u_{ij})$. We know that $\mathcal{F}^{ij}$ is an elliptic operator at an admissible solution.

Let $W=\log |\nabla S|^2+e^{\alpha (m_2-S)}$, with $\alpha>0$ to be chosen later.
Suppose that $W$ attains its maximum at point $z_0\in \mathcal{W}$. Choose an orthonormal basis at $z_0$ such that $u_{ij}$ is diagonal. It is clear that $\mathcal{F}^{ij}$ is also diagonal at $z_0$.

Then at $z_0$, we have
\begin{eqnarray}\label{eq0}
&&0=W_i=\frac{|\nabla S|^2_i}{|\nabla S|^2}-\alpha e^{\alpha(m_2- S)}S_i=\frac{2S_kS_{ki}}{|\nabla S|^2}-\alpha e^{\alpha (m_2- S)}S_i,
\end{eqnarray}

\begin{eqnarray}\label{eq1}
&&0\geq \mathcal{F}^{ij}W_{ij}=\mathcal{F}^{ij}\frac{2S_l S_{lij}+2S_{ki}S_{kj}}{|\nabla S|^2}-\frac{\mathcal{F}^{ij}|\nabla S|^2_i |\nabla S|^2_j}{|\nabla S|^4}\\&&-\alpha  e^{\alpha (m_2- S)} \mathcal{F}^{ij}S_{ij}+\alpha^2 e^{\alpha (m_2- S)}\mathcal{F}^{ij}S_iS_j.\nonumber
\end{eqnarray}

Notice that\begin{eqnarray}\label{eq00}
\mathcal{F}^{ii}u_{ii}=f,\quad \mathcal{F}^{ii}u_{iik}=f_k.
\end{eqnarray}

Using \eqref{eq0} and \eqref{eq00}, we estimate the second term in \eqref{eq1} as follows,
\begin{eqnarray}\label{C1eq1}
&&-\frac{\mathcal{F}^{ij}|\nabla S|^2_i |\nabla S|^2_j}{|\nabla S|^4}= -\mathcal{F}^{ii}\alpha e^{\alpha(m_2- S)}S_i\frac{2S_kS_{ki}}{|\nabla S|^2}\\&=& -2\alpha e^{\alpha(m_2- S)}\frac{\mathcal{F}^{ii}S_iS_k(u_{ki}+ \frac12 Q_{ikl}S_{l}-\delta_{ki} S)}{|\nabla S|^2}\nonumber\\
&\geq & -2\alpha e^{\alpha(m_2- S)}(f+C_1\mathcal{F}^{ii}S_i)+ 2\alpha e^{\alpha(m_2- S)} \frac{S \mathcal{F}^{ii}S_i^2}{|\nabla S|^2}\nonumber\\
&\geq & -2\alpha e^{\alpha(m_2- S)}(f+\frac{\alpha}{4}\mathcal{F}^{ii}S_i^2+ \frac{C_2}{\alpha}\sum_i \mathcal{F}^{ii}),\nonumber
\end{eqnarray}
where we dropped the term $2\alpha e^{\alpha(m_2- S)} \frac{S \mathcal{F}^{ii}S_i^2}{|\nabla S|^2}$ since it is positive, while used the H\"older inequality for the term $\mathcal{F}^{ii}S_i$.

Observe that 
\begin{eqnarray}\label{C1eq2}
S_{lij}=S_{ijl}+R_{mijl}S_m,
\end{eqnarray}
\begin{eqnarray}\label{C1eq3}
\mathcal{F}^{ij}S_{ki}S_{kj}\geq 0.
\end{eqnarray}

By employing \eqref{eq0}, \eqref{eq00}, \eqref{C1eq1}, \eqref{C1eq2} and \eqref{C1eq3} in \eqref{eq1}, we obtain that
\begin{eqnarray}\label{C1eq4}
0 \geq  \mathcal{F}^{ij}W_{ij}&\geq &\frac{2\mathcal{F}^{ij}S_l (u_{ij}+\frac12Q_{ijk}S_{k}-\delta_{ij} S)_l}{|\nabla S|^2}-C_3 e^{\alpha(m_2- S)}\sum_i \mathcal{F}^{ii}\\&&-\alpha e^{\alpha (m_2- S)}\mathcal{F}^{ij}(u_{ij}+\frac12Q_{ijk}S_{k}-\delta_{ij} S)\nonumber\\&& +\frac12\alpha^2e^{\alpha (m_2- S)} \mathcal{F}^{ii}S_i^2 -2\alpha e^{\alpha(m_2- S)}f\nonumber\\
&\geq & -2\frac{|\nabla f|}{|\nabla S|}-3\alpha e^{\alpha(2m_2- S)}f+(\alpha S -C_4)e^{\alpha (m_2- S)}\sum_i \mathcal{F}^{ii}\nonumber\\
&&+\frac12\alpha^2e^{\alpha (m_2- S)} \mathcal{F}^{ii}S_i^2.\nonumber
\end{eqnarray}

Choose $\alpha$ large, such that $\alpha m_1-C_4 \geq 1$. Note that $\sum_i \mathcal{F}^{ii}\geq 1$ (see \cite{Gu}).
It follows from \eqref{C1eq4} that
\begin{eqnarray}\label{eq2}
\sum_i \mathcal{F}^{ii}(1+S_i^2)&\leq &2\frac{|\nabla f|}{|\nabla S|}+3\alpha e^{\alpha(m_2- S)}f\\
&\leq & 2\max |\nabla f|+3\alpha e^{\alpha m_2}\max f.\nonumber
\end{eqnarray}

On the other hand, by using G\aa rding inequality (see G\aa rding \cite{Ga}), we see that 
\begin{eqnarray}\label{eq3}
\sum_i \mathcal{F}^{ii}(1+S_i^2)&\geq & n \left(\prod_i u_{ii}\right)^\frac{n-1}{n}\left(\prod_i (1+S_i^2)\right)^\frac1n\\
&\geq& n f^\frac{n-1}{n}(1+|\nabla S|^2)^\frac1n.\nonumber
\end{eqnarray}
Putting  \eqref{eq3} into \eqref{eq2}, we conclude that
\begin{eqnarray*}
|\nabla S|\leq C(n,m_1,m_2,\max f,\max |\nabla f^\frac{1}{n}|,\|F\|_{C^4(\mathcal{W})}).
\end{eqnarray*}
\end{proof}

\subsection{$C^2$ estimate} The $C^2$ estimate for the classical Minkowski problem is somehow direct and fine due to the structure of its equation. In particular, it involves the  exact formula of Gauss equation. In our problem the gradient term in the determinant also brings troubles. We brings here some idea from Yau's proof \cite{Ya} in Calabi conjecture and Guan-Li's proof \cite{GL} for more general complex Monge-Amp\'ere equations to our equation. It avoids the use of explicit formula for Gauss equation.

\begin{lemma}
Let $S$ be an admissible solution of \eqref{eq} on $\mathcal{W}$ . Let $f=\frac{1}{K}$. Then  there exists a constant $C$, depending on $n, m_1,m_2, \|f\|_{C^2}, \min f,\|F\|_{C^5(\mathcal{W})}$, such that
\begin{eqnarray}
|\nabla^2 S|\leq C.
\end{eqnarray}
\end{lemma}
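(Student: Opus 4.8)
The plan is to adapt the classical second-order estimate for Monge--Amp\`ere equations on the sphere (as in \cite{CY}) together with the auxiliary-function trick of Yau \cite{Ya} and Guan--Li \cite{GL}, so as to avoid invoking the explicit Gauss equation \eqref{Gausseq1}, whose curvature terms $Q^m_{jk}Q_{mli}$ are awkward. Set $u_{ij}=S_{ij}-\tfrac12 Q_{ijk}S_k+S\delta_{ij}$, $\mathcal{F}^{ij}=\partial\sigma_n/\partial u_{ij}(u)=\sigma_n(u)u^{ij}$, and consider the test function
\begin{eqnarray*}
\Phi=\log(a+\Delta S)+e^{\beta(m_2-S)},
\end{eqnarray*}
where $\Delta$ is the $g$-Laplacian, $a>0$ is chosen so large that $a+\Delta S\geq 1$ (using the already-established $C^0$ and $C^1$ bounds plus ellipticity, $\Delta S=g^{ij}u_{ij}-nS+\tfrac12 g^{ij}Q_{ijk}S_k$ is bounded below), and $\beta$ is a large constant to be fixed. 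Suppose $\Phi$ attains its maximum at $z_0$; choose $g$-normal coordinates at $z_0$ diagonalizing $u_{ij}$ (hence $\mathcal{F}^{ij}$), so that $\mathcal{F}^{ij}=\mathcal{F}^{ii}\delta_{ij}$ with $\mathcal{F}^{ii}u_{ii}=f$.

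The key steps, in order, are as follows. First I would differentiate the equation $\sigma_n(u)=f$ once and twice, obtaining $\mathcal{F}^{ij}u_{ijk}=f_k$ and $\mathcal{F}^{ij}u_{ijkk}=f_{kk}-\mathcal{F}^{ij,rs}u_{ijk}u_{rsk}$, and recall that $\mathcal{F}^{ij,rs}u_{ijk}u_{rsk}\le 0$ by concavity of $\log\sigma_n$ on the positive cone (G\aa rding \cite{Ga}), so this second-derivative term has a favorable sign after summing in $k$. Next I would commute covariant derivatives: $S_{iijj}$ versus $S_{jjii}$ differ by curvature terms $R\!\ast\!\nabla^2 S+\nabla R\!\ast\!\nabla S$, and since $R$ on $\mathcal{W}$ is itself controlled by $g$ and $Q$ (see \eqref{Gausseq1}, used only to bound $R$ and $\nabla R$ in terms of $\|F\|_{C^5}$, not to extract structure), these contribute at most $C(1+\Delta S)\sum_i\mathcal{F}^{ii}+C\sum_i\mathcal{F}^{ii}S_i^2$, absorbed later. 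Then I would compute $\mathcal{F}^{ij}\Phi_{ij}\le 0$ at $z_0$ using $\Phi_i=0$: the leading good term is $\frac{\mathcal{F}^{ij}(\Delta S)_{ij}}{a+\Delta S}$, and from the differentiated equation $\mathcal{F}^{ij}(\Delta S)_{ij}\ge \Delta f - C(1+\Delta S)\sum_i\mathcal{F}^{ii}$ after handling the $\tfrac12 Q_{ijk}S_k$ and $S\delta_{ij}$ pieces of $u_{ij}$ (their derivatives are lower order, bounded by $C^1$ data and $\|F\|_{C^5}$); the exponential term yields $-\beta e^{\beta(m_2-S)}\mathcal{F}^{ij}S_{ij}+\beta^2 e^{\beta(m_2-S)}\mathcal{F}^{ii}S_i^2$, and using $\mathcal{F}^{ij}S_{ij}=\mathcal{F}^{ij}u_{ij}+\tfrac12\mathcal{F}^{ij}Q_{ijk}S_k-S\sum_i\mathcal{F}^{ii}=f+O(\sum_i\mathcal{F}^{ii})$ the term $\beta S\sum_i\mathcal{F}^{ii}$ is the crucial positive quantity. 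Choosing $\beta$ with $\beta m_1$ larger than all accumulated constants, the inequality $0\ge\mathcal{F}^{ij}\Phi_{ij}$ forces $(\beta m_1-C)\sum_i\mathcal{F}^{ii}\le C$ unless $\Delta S(z_0)$ is already bounded; combined with the elementary bound $\sum_i\mathcal{F}^{ii}=\sigma_{n-1}(u)\ge c_n f^{(n-1)/n}(\Delta S/n+\text{l.o.t.})^{(n-1)/n}\cdot(\text{something})$ — more precisely $\sum_i\mathcal{F}^{ii}\ge (n-1)(\prod_{i}u_{ii})^{(n-1)/n}/\max_i u_{ii}$ together with $\max_i u_{ii}\le \Delta S+C$ and $\prod u_{ii}=f$ — one concludes $a+\Delta S$, hence $\Delta S$, is bounded at $z_0$, whence $\Phi$ and finally $\Delta S$ are bounded everywhere; ellipticity then upgrades this to a two-sided bound on all eigenvalues of $u_{ij}$, giving $|\nabla^2 S|\le C$.

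The main obstacle I expect is controlling the third-order terms coming from the $-\tfrac12 Q_{ijk}S_k$ summand in $u_{ij}$: differentiating $u_{ij}$ twice produces $Q_{ijk}S_{kll}$, a genuine third derivative of $S$ that is not obviously dominated. The resolution is the standard one — such terms are linear in $\nabla^3 S$ and appear multiplied by the bounded tensor $Q$, so after Cauchy--Schwarz against $\mathcal{F}^{ij}S_{ki}S_{kj}\ge 0$ (which is available as an unused good term, exactly as \eqref{C1eq3} in the $C^1$ proof) they are absorbed into $\varepsilon\,\mathcal{F}^{ii}S_{kii}^2$ plus lower-order, at the cost of enlarging $a$ and $\beta$; keeping careful track of the dependence on $\|F\|_{C^5(\mathcal{W})}$ and $\|f\|_{C^2}$ throughout is the bookkeeping-heavy but routine part. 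A secondary subtlety is that one must verify $a+\Delta S\geq 1$ can be arranged by an a priori choice of $a$ depending only on the allowed constants, which follows from $\Delta S\ge -nm_2-C\max|\nabla S|\ge -C$ by the previous two subsections.
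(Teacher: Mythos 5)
Your overall architecture coincides with the paper's: the same test function $\Phi=\log(a+\Delta S)+e^{\beta(m_2-S)}$, the same use of the concavity of $\log\sigma_n$ after differentiating the equation twice, the same commutation of covariant derivatives with the curvature of $\mathcal{W}$ controlled by $\|F\|_{C^5}$, and the same endgame (first bound $\sum_i\bar{\mathcal{F}}^{ii}$, then invert to bound $\Delta S$ and hence all eigenvalues of $u_{ij}$). Your final step is in fact slightly simpler than the paper's appeal to Yau's elementary inequality and is correct.

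There is, however, a genuine gap precisely at the step you yourself flag as the main obstacle, and your proposed resolution does not work as stated. After contracting with $\bar{\mathcal{F}}^{ii}$ and dividing by $a+\Delta S$, the anisotropic third-order term is $\frac{1}{a+\Delta S}\,\bar{\mathcal{F}}^{ii}\tfrac12 Q_{iil}S_{lkk}$. You propose to absorb it by Cauchy--Schwarz into $\varepsilon\,\mathcal{F}^{ii}S_{kii}^2$, but that quantity is not an available good term: the only positive third-order square in the inequality is the concavity term $u^{ii}u^{jj}u_{ijk}^2/(a+\Delta S)$, and it is consumed without slack in dominating $\bar{\mathcal{F}}^{ii}(\Delta S)_i^2/(a+\Delta S)^2$ (the Cauchy--Schwarz step \eqref{C2eq7} is sharp, so there is no $(1-\varepsilon)$ left over), while the term $\mathcal{F}^{ij}S_{ki}S_{kj}\geq 0$ you invoke is second order and cannot absorb a third-order square. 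What actually closes the argument --- and what the paper does --- is to use the critical-point identity \eqref{C2eq1}, $(\Delta S)_l=\beta e^{\beta(m_2-S)}(a+\Delta S)S_l$, to convert $\sum_k S_{lkk}=(\Delta S)_l+Riem*\nabla S$ into first-order data; the resulting term $\beta e^{\beta(m_2-S)}\tfrac12\bar{\mathcal{F}}^{ii}Q_{iil}S_l$ then cancels \emph{exactly} against the identical term arising from expanding $-\beta e^{\beta(m_2-S)}\bar{\mathcal{F}}^{ij}S_{ij}$ (compare \eqref{C2eq8} with \eqref{C2eq9}). Some such exact cancellation, or a split with fractional powers $\beta^{3/2},\beta^{1/2}$ absorbed by the good terms $\beta^2 e^{\beta(m_2-S)}\bar{\mathcal{F}}^{ii}S_i^2$ and $m_1\beta e^{\beta(m_2-S)}\sum_i\bar{\mathcal{F}}^{ii}$ as in \eqref{C2eq10}, is unavoidable: a crude bound of this term has size $C\beta e^{\beta(m_2-S)}\sum_i\bar{\mathcal{F}}^{ii}$ with $C$ determined by $Q$ and the $C^1$ bound, i.e. the same order in $\beta$ as the good term $m_1\beta e^{\beta(m_2-S)}\sum_i\bar{\mathcal{F}}^{ii}$, and there is no reason for $m_1$ to dominate $C$. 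A minor additional point: with $\mathcal{F}=\sigma_n$ the quantity $\mathcal{F}^{ij,rs}u_{ijk}u_{rsk}$ is not sign-definite ($\det$ itself is neither concave nor convex on the positive cone); you must work with $\log\sigma_n$, as the paper does, for the second-derivative term to have the favorable sign you claim.
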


\begin{proof} Let $\bar{\mathcal{F}}(u_{ij})=\log \sigma_n(u_{ij})$ and $u^{ij}$ be the inverse matrix of $u_{ij}$. Then
\begin{eqnarray}
\bar{\mathcal{F}}^{ij}=f u^{ij}, \quad \bar{\mathcal{F}}^{ij,kl}=-u^{ik}u^{jl}.
\end{eqnarray}

For an admissible solution $S$,  $0<2\sigma_2(u_{ij})=(\sum_{i} u_{ii})^2-\sum_{i,j} |u_{ij}|^2$. In view of Lemma \ref{lem1} and \ref{lem2}, to bound $|\nabla^2 S|$, it is sufficient to bound $\Delta S$ from above. Here $\Delta$ denotes the Laplace operator with respect to $g$. We  may assume that $\Delta S\geq C$ for some $C>0$. 

Let $\Phi=\log (a+\Delta S)+e^{\beta (m_2-S)}$ with $a=\sup_\mathcal{W} |-\frac12 Q_{iik}S_k+nS|$, $\beta>0$ to be chosen later. Suppose that $\Phi$ attains its maximum at point $z_0\in \mathcal{W}$. Choose the orthonormal basis at $z_0$ such that $u_{ij}$ is diagonal. Clearly $\bar{\mathcal{F}}^{ij}$ is also diagonal at $z_0$.
Then we have
\begin{eqnarray}\label{C2eq1}
\Phi_i=\frac{(\Delta S)_i}{a+\Delta S}-\beta e^{\beta (m_2-S)}S_i=0,
\end{eqnarray}
\begin{eqnarray}\label{C2eq2}
&&0\geq \bar{\mathcal{F}}^{ij}\Phi_{ij}=\frac{\bar{\mathcal{F}}^{ii}(\Delta S)_{ii}}{a+\Delta S}-\frac{\bar{\mathcal{F}}^{ii}(\Delta S)_i^2}{(a+\Delta S)^2}-\beta e^{\beta (m_2-S)}\bar{\mathcal{F}}^{ii}S_{ii}+\beta^2 e^{\beta (m_2-S)}\bar{\mathcal{F}}^{ii}S_{i}^2
\end{eqnarray}

We estimate the term $\bar{\mathcal{F}}^{ii}(\Delta S)_{ii}$ and $\bar{\mathcal{F}}^{ii}(\Delta S)_i^2$ by using \eqref{C2eq1} as follows. 
\begin{eqnarray}\label{C2eq3}
&&\bar{\mathcal{F}}^{ii}(\Delta S)_{ii}= \bar{\mathcal{F}}^{ii} (S_{iikk}+Riem*\nabla^2 S+\nabla Riem*\nabla S)\\
&\geq & \bar{\mathcal{F}}^{ii}(u_{ii}+\frac12 Q_{iil}S_l-S)_{kk}- C_5\sum_i \bar{\mathcal{F}}^{ii}(|\nabla^2 S|+|\nabla S|)\nonumber\\
& \geq & \Delta f-\bar{\mathcal{F}}^{ij,rs}u_{ijk}u_{rsk}+\frac12 Q_{iil}(\Delta S)_l- C_6\sum_i \bar{\mathcal{F}}^{ii}(|\nabla^2 S|+|\nabla S|)\nonumber\\
& \geq & u^{ii}u^{jj}u_{ijk}^2+\beta e^{\beta (m_2-S)}\frac12 Q_{iil}S_l(a+\Delta S) -C_7\sum_i\bar{\mathcal{F}}^{ii}\Delta S- C_8(1+\sum_i \bar{\mathcal{F}}^{ii}).\nonumber
\end{eqnarray}
Here and in the following we use  \textit{Riem} to denote the Riemannian tensor of $g$ and the notation $*$ to denote scalar contraction of two tensors by $g$.

\begin{eqnarray}\label{C2eq4}
&&\bar{\mathcal{F}}^{ii}(\Delta S)_i^2=\bar{\mathcal{F}}^{ii}(S_{ikk}+Riem*\nabla S)^2\\
&\leq &\bar{\mathcal{F}}^{ii}S_{ikk}^2+C_9\bar{\mathcal{F}}^{ii}|S_{ikk}|+C_{10}\sum_i \bar{\mathcal{F}}^{ii}\nonumber\\
&\leq & \bar{\mathcal{F}}^{ii}S_{ikk}^2 +C_9 \bar{\mathcal{F}}^{ii}\left((a+\Delta S)|S_i| \beta e^{\beta (2m_2-S)}+|Riem*\nabla S|\right)+C_{10}\sum_i \bar{\mathcal{F}}^{ii}\nonumber\\
&\leq & \bar{\mathcal{F}}^{ii}S_{ikk}^2 +C_{11}(\beta e^{\beta (m_2-S)}|S_i|\Delta S+1) \sum_i \bar{\mathcal{F}}^{ii}.\nonumber
\end{eqnarray}

Since \begin{eqnarray*}
S_{ikk}=u_{ikk}+(\frac12 Q_{ikl}S_l-S\delta_{ik})_k,
\end{eqnarray*}
we have
\begin{eqnarray}\label{C2eq5}
&& S_{ikk}^2-u_{ikk}^2= 2u_{ikk}(\frac12 Q_{ikl}S_l-S\delta_{ik})_k+(\frac12 Q_{ikl}S_l-S\delta_{ik})_k^2\\
&=& 2S_{ikk}(\frac12 Q_{ikl}S_l-S\delta_{ik})_k-(\frac12 Q_{ikl}S_l-S\delta_{ik})_k^2\nonumber\\
&=& 2\left((a+\Delta S) \beta e^{\beta (m_2-S)}S_i+Riem*\nabla S\right)(\frac12 Q_{ikl}S_l-S\delta_{ik})_k-(\frac12 Q_{ikl}S_l-S\delta_{ik})_k^2\nonumber\\
&\leq & C_{12}(1+\beta e^{\beta (m_2-S)}|S_i|)\left((\Delta S)^2+\Delta S+1\right).\nonumber
\end{eqnarray}

Now using \eqref{C2eq4} and \eqref{C2eq5}, we obtain
\begin{eqnarray}\label{C2eq6}
&&\frac{\bar{\mathcal{F}}^{ii}(\Delta S)_i^2}{(a+\Delta S)^2}\leq \frac{\bar{\mathcal{F}}^{ii}S_{ikk}^2}{(a+\Delta S)^2}+C_{13}(\beta e^{\beta (m_2-S)}|S_i|+1) \sum_i \bar{\mathcal{F}}^{ii}\\
&\leq & \frac{\bar{\mathcal{F}}^{ii}u_{ikk}^2}{(a+\Delta S)^2}+ C_{14}(\beta e^{\beta (m_2-S)}|S_i|+1)\sum_i \bar{\mathcal{F}}^{ii}\nonumber\\
&\leq & \frac{\bar{\mathcal{F}}^{ii}u_{ikk}^2}{(a+\Delta S)^2}+\frac12 C_{14} \beta^{\frac32}e^{\beta (m_2-S)}\bar{\mathcal{F}}^{ii}S_i^2   +\frac12 C_{14}\beta^\frac12 e^{\beta (m_2-S)}\sum_i \bar{\mathcal{F}}^{ii}+C_{14}\sum_i \bar{\mathcal{F}}^{ii}.\nonumber
\end{eqnarray}
In the last inequality, we used the H\"older inequality.

\

We estimate the term $\bar{\mathcal{F}}^{ii}u_{ikk}^2$ by Cauchy-Schwarz inequality,
\begin{eqnarray}\label{C2eq7}
\bar{\mathcal{F}}^{ii}u_{ikk}^2&\leq &\sum_{i,j,k} u^{ii}(u_{jj}u^{jj}u_{ijk}^2)\\
&\leq & \sum_j u_{jj} \sum_{i,j,k}u^{ii}u^{jj}u_{ijk}^2\nonumber\\
&\leq & (a +\Delta S)\sum_{i,j,k}u^{ii}u^{jj}u_{ijk}^2.\nonumber
\end{eqnarray}

It follows from \eqref{C2eq3}, \eqref{C2eq6} and \eqref{C2eq7} that
\begin{eqnarray}\label{C2eq8}
\frac{\bar{\mathcal{F}}^{ii}(\Delta S)_{ii}}{a+\Delta S}-\frac{\bar{\mathcal{F}}^{ii}(\Delta S)_i^2}{(a+\Delta S)^2}&\geq &\beta e^{\beta (m_2-S)}\frac12 \bar{\mathcal{F}}^{ii}Q_{iil}S_l-C_{15} \beta^{\frac32}e^{\beta (m_2-S)}\bar{\mathcal{F}}^{ii}S_i^2 \\&&   -C_{16}\beta^\frac12 e^{\beta (m_2-S)}\sum_i \bar{\mathcal{F}}^{ii}-C_{17}\sum_i \bar{\mathcal{F}}^{ii}.\nonumber
\end{eqnarray}

We also have 
\begin{eqnarray}\label{C2eq9}
&&-\beta e^{\beta (m_2-S)}\bar{\mathcal{F}}^{ij}S_{ij}=-\beta e^{\beta (m_2-S)}\bar{\mathcal{F}}^{ii}(u_{ii}+\frac12 Q_{iil}S_l-S)\\&\geq & -\beta e^{\beta (m_2-S)}f-\beta e^{\beta (m_2-S)}\frac12 \bar{\mathcal{F}}^{ii} Q_{iil}S_l+m_1\beta e^{\beta (m_2-S)}\sum_i \bar{\mathcal{F}}^{ii},\nonumber
\end{eqnarray}
where $m_1$ is the uniform positive bound of $S$ in Lemma \ref{lem1}.

Therefore, by combining \eqref{C2eq2}, \eqref{C2eq8} and \eqref{C2eq9}, we get
\begin{eqnarray}\label{C2eq10}
0&\geq &e^{\beta (m_2-S)}(m_1\beta - C_{16}\beta^\frac12 - C_{18} )\sum_i \bar{\mathcal{F}}^{ii}\\&&+e^{\beta (m_2-S)} (\beta^2- C_{15} \beta^{\frac32})\bar{\mathcal{F}}^{ii}S_i^2 -C_{19}.\nonumber
\end{eqnarray}

Choose $\beta$ large enough in \eqref{C2eq10}, we obtain 
\begin{eqnarray}\label{C2eq11}
\sum_i \bar{\mathcal{F}}^{ii}\leq C,
\end{eqnarray}
where $C$ depends on $m_1, m_2, \|f\|_{C^2}, \|S\|_{C^1},\|F\|_{C^5(\mathcal{W})}$.

Recall the following elementary inequality (see Yau \cite{Ya}), 
\begin{eqnarray*}
\frac{\sum_i \lambda_i}{\prod_i \lambda_i}\leq \left(\sum_i \lambda_i^{-1}\right)^{n-1}\hbox{ for }\lambda_i>0,\forall i=1,\cdots, n.
\end{eqnarray*}

Since $\bar{\mathcal{F}}^{ii}=u^{ii}=u_{ii}^{-1}$, $\det(u_{ii})=f$ we use the previous inequality by $\lambda_i=u_{ii}$ and \eqref{C2eq11} to get
\begin{eqnarray*}
\sum_i u_{ii}\leq C_{20}\prod_i u_{ii}\leq C_{21}
\end{eqnarray*}
which implies \begin{eqnarray*}
\Delta S\leq C
\end{eqnarray*}
where $C$ depends on $m, M, \|f\|_{C^2},\|S\|_{C^1},\|F\|_{C^5(\mathcal{W})}$.
Note that our computation is only valid at the maximum point $z_0$ of $\Phi$. Nevertheless, we have 
\begin{eqnarray*}
\Phi\leq \Phi(z_0)=\log (a+\Delta S(z_0))+\beta e^{\beta(m_2-S(z_0))} \leq C,
\end{eqnarray*}
which yields the $C^2$ estimate for $S$ at every point.
\end{proof}

\noindent{\it Proof of Theorem \ref{a pri}:} Once we have $C^2$ estimate, Theorem \ref{a pri} follows from the Evans-Krylov theorem and the standard elliptic theory.\qed

\

\section{Openness  and proof of Theorem \ref{thm1}}

By virtue of Theorem \ref{a pri}, we can assume that $K\in C^\infty(\mathcal{W})$. We will use the method of continuity to prove Theorem \ref{thm1}.   To be precise, let 
\begin{eqnarray*}
\frac{1}{K_t}=\frac{t}{K}+(1-t) \quad\forall 0\leq t\leq 1.
\end{eqnarray*}
It is easy to see that $K_t>0$ and satisfies \eqref{necc}.
Define 
$$\mathcal{S}=\{t\in[0,1]|\det(S_{ij}-\frac12Q_{ijk}S_k+S\delta_{ij})=\frac{1}{K_t} \hbox{ has an admissible solution  on }\mathcal{W} \}.$$
Clearly, $0\in \mathcal{S}$ since $\mathcal{W}$ has anisotropic Gauss curvature $1$ (see Example \ref{exam}). 
We will apply the implicit function theorem to \eqref{eq} to prove the openness of the set $\mathcal{S}$.

\begin{proposition}\label{open}
The set $\mathcal{S}$ is open in $[0,1]$.
\end{proposition}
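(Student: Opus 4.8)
The plan is to apply the implicit function theorem in Hölder spaces to the nonlinear operator $\Psi(S) := \det(S_{ij} - \tfrac12 Q_{ijk}S_k + S\delta_{ij})$ acting on functions over $\mathcal{W}$. Fix $t_0 \in \mathcal{S}$ with an admissible solution $S_0$ of $\Psi(S_0) = 1/K_{t_0}$. Since $S_0$ is admissible, the matrix $u_{ij} = (S_0)_{ij} - \tfrac12 Q_{ijk}(S_0)_k + S_0\delta_{ij}$ is positive definite, so the linearization $L_{S_0}$ of $\Psi$ at $S_0$ is a second-order linear elliptic operator, namely $L_{S_0}(\phi) = \mathcal{F}^{ij}(\phi_{ij} - \tfrac12 Q_{ijk}\phi_k + \phi\delta_{ij})$ with $\mathcal{F}^{ij} = \partial\sigma_n/\partial u_{ij}$ positive definite. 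I would work with $\Psi : C^{k+1,\alpha}(\mathcal{W}) \supset \mathcal{U} \to C^{k-1,\alpha}(\mathcal{W})$ where $\mathcal{U}$ is the open set of admissible functions (openness of $\mathcal{U}$ in $C^{k+1,\alpha}$ is clear, since positive definiteness is an open condition and the coefficients depend continuously on the $C^{k+1,\alpha}$ data). To conclude that $\mathcal{S}$ is open, it suffices to show $L_{S_0}$ is an isomorphism onto its target — or, more precisely, to account for the translation invariance noted in \eqref{kernel}, to show $L_{S_0}$ is Fredholm of index zero with kernel exactly $\mathrm{span}\{G(z)(z,E^\alpha)\}_{\alpha=1}^{n+1}$ and to set up the continuity method on the slice satisfying the orthogonality condition \eqref{ortho}.

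The key steps, in order: (1) Compute the linearization explicitly and verify ellipticity at an admissible solution, so that $L_{S_0} : C^{k+1,\alpha} \to C^{k-1,\alpha}$ is a bounded Fredholm operator of index zero by standard elliptic theory on the closed manifold $\mathcal{W}$. (2) Identify the kernel: by \eqref{kernel} the functions $G(z)(z,E^\alpha)$ lie in $\ker L_{S_0}$; for the reverse inclusion I would invoke Lemma \ref{ker} (the kernel computation promised in the introduction), which pins down $\ker L_{S_0}$ to be exactly this $(n+1)$-dimensional space. (3) Identify the cokernel: here I would use Lemma \ref{adjoint}, the self-adjointness of $L_{S_0}$ with respect to the anisotropic measure $d\mu$; self-adjointness forces $\mathrm{coker}\, L_{S_0} \cong \ker L_{S_0}$, so the range of $L_{S_0}$ is exactly $\{\psi \in C^{k-1,\alpha}(\mathcal{W}) : \int_\mathcal{W} \psi \, G(z)(z,E^\alpha)\, d\mu = 0,\ \forall\alpha\}$. (4) Restrict to the correct slice: work in the subspace $\mathcal{H} = \{S : \int_\mathcal{W} S\, G(z)(z,E^\alpha)\, d\mu = 0\}$ of $C^{k+1,\alpha}$ and the corresponding subspace of $C^{k-1,\alpha}$; on these complemented subspaces $L_{S_0}$ restricts to an isomorphism. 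Then apply the implicit function theorem: for $t$ near $t_0$, since $1/K_t$ depends smoothly (affinely, in fact) on $t$ and automatically satisfies \eqref{necc}, hence lies in the range, there is a nearby admissible solution; this gives a relatively open neighborhood of $t_0$ in $\mathcal{S}$.

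The main obstacle is step (3) — or rather, making sure the Fredholm/cokernel bookkeeping is consistent with the divergence-structure failure noted in the introduction. In the classical Minkowski problem one uses that the Newton tensor $\partial\det(u_{ij})/\partial u_{ij}$ is divergence-free to get self-adjointness of the linearization with respect to the \emph{round} measure; here that fails, and self-adjointness holds only with respect to $d\mu_F$, which is why Lemma \ref{adjoint} and Lemma \ref{volume} (relating $d\mu_F$ to $dV_g$ via the tensor $A$) are essential. So the real content is: verify that the divergence identity $\mathcal{F}^{ij}{}_{;j} = \mathcal{F}^{ij} A_{ijk}g^{k\ell}(\cdots)$-type correction terms combine, using the Codazzi equation \eqref{Codazzi} and the formula \eqref{A} for $A$, to exactly cancel against the $-\tfrac12 Q_{ijk}\phi_k$ term in $L_{S_0}$ when one integrates against $d\mu_F$; this is precisely what Lemma \ref{adjoint} asserts. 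Assuming those two lemmas, the openness argument itself is the standard implicit-function-theorem-plus-Fredholm-alternative package and goes through routinely.

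\begin{proof}
Fix $t_0 \in \mathcal{S}$ and let $S_0 \in C^{k+1,\alpha}(\mathcal{W})$ be an admissible solution of \eqref{eq} with $K = K_{t_0}$, normalized by \eqref{ortho}. Write $u_{ij}[S] = S_{ij} - \tfrac12 Q_{ijk}S_k + S\delta_{ij}$ and $\Psi(S) = \det(u_{ij}[S])$, regarded as a map from the open set $\mathcal{U} \subset C^{k+1,\alpha}(\mathcal{W})$ of functions with $u_{ij}[S] > 0$ into $C^{k-1,\alpha}(\mathcal{W})$. The Fr\'echet derivative of $\Psi$ at $S_0$ is the linear operator
\begin{eqnarray*}
L_{S_0}\phi = \mathcal{F}^{ij}\left(\phi_{ij} - \tfrac12 Q_{ijk}\phi_k + \phi\,\delta_{ij}\right), \qquad \mathcal{F}^{ij} = \frac{\partial \sigma_n}{\partial u_{ij}}(u_{ij}[S_0]),
\end{eqnarray*}
which is a second-order linear elliptic operator on $\mathcal{W}$ because $(\mathcal{F}^{ij})$ is positive definite at an admissible solution. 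By the standard elliptic theory on the closed manifold $\mathcal{W}$, $L_{S_0} : C^{k+1,\alpha}(\mathcal{W}) \to C^{k-1,\alpha}(\mathcal{W})$ is Fredholm of index zero.

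By \eqref{kernel}, the $(n+1)$-dimensional space $V = \mathrm{span}\{z \mapsto G(z)(z,E^\alpha)\}_{\alpha=1}^{n+1}$ is contained in $\ker L_{S_0}$, and by Lemma \ref{ker} this is all of $\ker L_{S_0}$. By Lemma \ref{adjoint}, $L_{S_0}$ is self-adjoint with respect to the anisotropic measure $d\mu$, so the range of $L_{S_0}$ equals the $L^2(d\mu)$-orthogonal complement of $V$:
\begin{eqnarray*}
\mathrm{Range}(L_{S_0}) = \Big\{\psi \in C^{k-1,\alpha}(\mathcal{W}) : \int_\mathcal{W} \psi(z)\,G(z)(z,E^\alpha)\,d\mu = 0,\ \forall \alpha = 1,\dots,n+1\Big\}.
\end{eqnarray*}

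Now set $\mathcal{H} = \{S \in C^{k+1,\alpha}(\mathcal{W}) : \int_\mathcal{W} S(z) G(z)(z,E^\alpha)\,d\mu = 0,\ \forall\alpha\}$ and $\mathcal{E} = \{\psi \in C^{k-1,\alpha}(\mathcal{W}) : \int_\mathcal{W} \psi(z) G(z)(z,E^\alpha)\,d\mu = 0,\ \forall\alpha\}$, closed subspaces complementing $V$ in their respective spaces. The restriction $L_{S_0}|_{\mathcal{H}} : \mathcal{H} \to \mathcal{E}$ is injective (its kernel is $V \cap \mathcal{H} = \{0\}$) and surjective (its range is $\mathcal{E}$), hence an isomorphism of Banach spaces. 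Consider $\widetilde{\Psi} : (\mathcal{U} \cap \mathcal{H}) \times [0,1] \to \mathcal{E}$, $\widetilde{\Psi}(S,t) = \Psi(S) - 1/K_t$; this is well defined since $1/K_t = t/K + (1-t)$ satisfies \eqref{necc} for every $t$, and $\Psi(S)$, being the reciprocal anisotropic Gauss-Kronecker curvature of the convex body built from $S$ via Proposition \ref{P2}, satisfies \eqref{necc} as well, so $\widetilde{\Psi}(S,t) \in \mathcal{E}$. We have $\widetilde{\Psi}(S_0,t_0) = 0$ and $D_S\widetilde{\Psi}(S_0,t_0) = L_{S_0}|_{\mathcal{H}}$ is an isomorphism. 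Since $t \mapsto 1/K_t$ is smooth into $C^{k-1,\alpha}(\mathcal{W})$, the implicit function theorem yields $\varepsilon > 0$ and a $C^1$ curve $t \mapsto S_t \in \mathcal{U} \cap \mathcal{H}$, $|t - t_0| < \varepsilon$, with $\widetilde{\Psi}(S_t,t) = 0$; each $S_t$ is an admissible solution of \eqref{eq} with $K = K_t$. Hence $(t_0 - \varepsilon, t_0 + \varepsilon) \cap [0,1] \subset \mathcal{S}$, and $\mathcal{S}$ is open in $[0,1]$.
\end{proof}
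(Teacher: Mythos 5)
Your proof is correct and follows essentially the same route as the paper: identify the kernel of $L_{S_0}$ via Lemma \ref{ker}, use the self-adjointness from Lemma \ref{adjoint} to identify the range as the orthogonal complement of $\mathrm{span}\{G(z)(z,E^\alpha)\}$ with respect to $d\mu$, and then apply the implicit function theorem on the slice defined by \eqref{ortho}. The only difference is cosmetic — you work in H\"older spaces with the Fredholm-index-zero formulation, whereas the paper works in Sobolev spaces $H^{m+2}\to H^m$ and invokes the Hilbert-space identity $\mathrm{Image}(L_S)=\mathrm{Ker}(L_S^*)^\perp$.
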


To prove the openness of $\mathcal{S}$, we shall study the  linearized operator of $S\mapsto \det(S_{ij}-\frac12Q_{ijk}S_k+S\delta_{ij}).$

Denote $u_{ij}=S_{ij}-\frac12Q_{ijk}S_k+S\delta_{ij}$. Let $L_S$ be the linearized operator of $S\mapsto \det(u_{ij})$, namely, \begin{eqnarray*}
L_S(v)=\frac{\partial\sigma_n}{\partial u_{ij}}(u_{ij}) (v_{ij}-\frac12Q_{ijk}v_k+v\delta_{ij})
\end{eqnarray*}
for any $v\in C^\infty(\mathcal{W})$.
The following proposition shows that $L_S$ is a self-adjoint operator.
\begin{lemma}\label{adjoint}
For any $S,v,w\in C^2(\mathcal{W})$, we have\begin{eqnarray*}
\int_\mathcal{W}w L_S(v) d\mu=\int_\mathcal{W}v  L_S(w)d\mu.
\end{eqnarray*}\end{lemma}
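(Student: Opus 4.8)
The plan is to show that $L_S$ is formally self-adjoint with respect to the anisotropic measure $d\mu$ by writing $L_S$ in divergence form with respect to $d\mu$. The key structural fact I will exploit is Lemma \ref{volume}: if $d\mu = \varphi\, dV_g$, then $\partial_i\log\varphi = A^j_{ij} = g^{jk}A_{ijk}$. In the classical Minkowski problem the Newton tensor $\mathcal{F}^{ij} = \partial\sigma_n/\partial u_{ij}$ is divergence-free, which gives self-adjointness directly; here that fails because of the $Q$-term, but the anticipated point is that the failure of $\mathcal{F}^{ij}$ to be divergence-free is exactly compensated by the factor $\varphi$ relating $d\mu$ to $dV_g$.

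Concretely, I would first recall that $u_{ij} = h_{ij}$ along $M$ (from Proposition \ref{P1}), so $\mathcal{F}^{ij} = \partial\sigma_n/\partial u_{ij}$ is, up to the usual cofactor identification, the $(n-1)$-th Newton tensor of the anisotropic second fundamental form. The essential computation is the ``anisotropic divergence-free'' identity
\begin{eqnarray*}
\nabla_j \mathcal{F}^{ij} = -\mathcal{F}^{ij} A^k_{jk},
\end{eqnarray*}
i.e. $\nabla_j(\varphi\,\mathcal{F}^{ij}) = \varphi(\nabla_j\mathcal{F}^{ij} + \mathcal{F}^{ij}\partial_j\log\varphi) = 0$. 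This should follow by differentiating the determinant, invoking the Codazzi equation \eqref{Codazzi} (which carries the correction terms $h^l_j A_{lki}$), and using the explicit form \eqref{A} of $A$ together with the symmetries of $Q$; the $Q$-contributions from Codazzi match, after contraction, the $A^j_{ij}$ term supplied by Lemma \ref{volume}. Granting this, I would integrate by parts twice: writing $\int_\mathcal{W} w\, L_S(v)\, d\mu = \int_\mathcal{W} w\,\mathcal{F}^{ij}(v_{ij} - \tfrac12 Q_{ijk}v_k + v\delta_{ij})\,\varphi\, dV_g$, move both derivatives off $v$ onto $w\varphi\mathcal{F}^{ij}$; the boundary terms vanish since $\mathcal{W}$ is closed, and the terms where a derivative lands on $\varphi\mathcal{F}^{ij}$ are precisely arranged (again by the identity above and the structure of the $-\tfrac12 Q_{ijk}v_k$ term) to reproduce $\int_\mathcal{W} v\, \mathcal{F}^{ij}(w_{ij} - \tfrac12 Q_{ijk}w_k + w\delta_{ij})\,\varphi\,dV_g = \int_\mathcal{W} v\, L_S(w)\, d\mu$.

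The main obstacle I expect is bookkeeping the first-order terms: after integrating by parts the cross terms involve $\nabla_j(\varphi\mathcal{F}^{ij})$, $Q_{ijk}$ contracted against $\mathcal{F}^{ij}$, and derivatives of $Q$, and one must verify these cancel in the right combination rather than merely estimating them. This is where the precise identity \eqref{4Q} on $\mathcal{W}$, the symmetry $A_{ijk}=A_{jik}$, and the relation $A_{ijk}+A_{ikj} = -h^l_i Q_{ljk}$ (noting $h^l_i = \delta^l_i$ on $\mathcal{W}$ by Example \ref{exam}, so $Q_{ijk}$ is totally symmetric there) all enter; since the statement is claimed for general $S, v, w$ and not just at an admissible solution, I should be careful that $\mathcal{F}^{ij}$ here means the cofactor matrix of $u_{ij}$ and that the divergence identity for it holds as an algebraic consequence of the Codazzi-type relation on $\mathcal{W}$, independently of positivity. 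Once the divergence identity is in hand, the symmetrization is routine.
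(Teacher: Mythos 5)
Your overall strategy --- pass to $d\mu=\varphi\,dV_g$, integrate by parts, and control the divergence of $\varphi\,\mathcal{F}^{ij}$ so that the first-order terms symmetrize --- is the same as the paper's, but the key identity you declare to be the ``essential computation'' is false, and with it the argument does not close. You claim $\nabla_j(\varphi\,\mathcal{F}^{ij})=0$. What is actually true (and what the paper proves by combining the commutation relation $u_{ijk}-u_{ikj}=\tfrac12 Q_{ikm}u_{mj}-\tfrac12 Q_{ijm}u_{mk}$ --- itself a consequence of the Gauss equation of $\mathcal{W}$ and the symmetry \eqref{4Q} --- with Lemma \ref{volume} and the Kronecker-delta expansion of $\sigma_n$) is
\begin{eqnarray*}
\nabla_j\bigl(\varphi\,\mathcal{F}^{ij}\bigr)=-\tfrac12\,\varphi\,\mathcal{F}^{jk}Q_{jki},
\end{eqnarray*}
whose right-hand side does not vanish in general (already for $n=1$ one has $\mathcal{F}^{11}=1$ and $\nabla_1\varphi=-\tfrac12 Q_{111}\varphi\neq 0$). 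This nonzero divergence is not a defect to be cancelled away; it is exactly what is needed. After a single integration by parts,
\begin{eqnarray*}
\int_\mathcal{W} w\,L_S(v)\,d\mu=-\int_\mathcal{W}\mathcal{F}^{ij}w_iv_j\,d\mu+\int_\mathcal{W}\mathcal{F}^{ii}wv\,d\mu-\int_\mathcal{W} w\Bigl[\nabla_j(\varphi\mathcal{F}^{ij})\,v_i+\tfrac12\,\varphi\,\mathcal{F}^{ij}Q_{ijk}v_k\Bigr]dV_g,
\end{eqnarray*}
and it is the last bracket that must vanish: the first-order term $-\tfrac12 Q_{ijk}v_k$ built into $L_S$ is absorbed precisely by the nonzero divergence of $\varphi\mathcal{F}^{ij}$. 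If your identity $\nabla_j(\varphi\mathcal{F}^{ij})=0$ were correct, the term $-\tfrac12\int_\mathcal{W} w\,\mathcal{F}^{ij}Q_{ijk}v_k\,d\mu$ would survive, and it is manifestly not symmetric in $v$ and $w$; self-adjointness would then fail. So the proof as outlined cannot go through without replacing your identity by the one displayed above, and establishing that identity is the entire content of the paper's proof.

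A secondary point: you propose to derive the divergence identity from the Codazzi equation \eqref{Codazzi} of the hypersurface $M$. Since the lemma is asserted for arbitrary $S\in C^2(\mathcal{W})$ --- no admissibility, hence no hypersurface with support function $S$ --- the commutation rule must be obtained intrinsically on $\mathcal{W}$, from $S_{ijk}-S_{ikj}=R_{pijk}S_p$ with $R$ given by the Gauss equation \eqref{Gausseq1} of the Wulff shape, together with \eqref{4Q}. You flag this caveat yourself at the end, which is to your credit, but the substantive gap remains the content of the divergence identity itself.
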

\begin{proof}
From the Gauss equation \eqref{Gausseq1}, we have
\begin{eqnarray}\label{adeq}
&& S_{ijk}-S_{ikj}=R_{pijk}S_p= S_j\delta_{ik}-S_k\delta_{ij}+\frac14 (Q_{ijm} Q_{mkp}-Q_{ikm} Q_{mjp})S_p.
\end{eqnarray}
By using \eqref{4Q}, namely $Q_{ijk,l}=Q_{ilk,j}$ and \eqref{adeq}, we see that 
\begin{eqnarray}\label{adeq0}
&& u_{ijk}-u_{ikj}= \frac14 (Q_{ijm} Q_{mkp}-Q_{ikm} Q_{mjp})S_p-\frac12 Q_{ijm}S_{mk}+\frac12 Q_{ikm}S_{mj}\\
&=&-\frac12 Q_{ijm}(u_{mk}-S_{mk}-S\delta_{mk})+\frac12 Q_{ikm}(u_{mj}-S_{mj}-S\delta_{mj})\nonumber\\&&
-\frac12 Q_{ijm}S_{mk}+\frac12 Q_{ikm}S_{mj}\nonumber\\
&=&\frac12 Q_{ikm}u_{mj}-\frac12 Q_{ijm}u_{mk}.\nonumber
\end{eqnarray}

By definition, \begin{eqnarray*}
\sigma_n(u_{ij})=\frac{1}{n!}\sum_{\substack{i_1,\cdots i_{n}=1,\\ j_1,\cdots,j_{n}=1}}^n\delta_{j_1\cdots j_n}^{i_1\cdots i_n}u_{i_1j_1}\cdots u_{i_nj_n}.
\end{eqnarray*}
\begin{eqnarray*}
\frac{\partial\sigma_n}{\partial u_{ij}}(u_{ij})=\frac{1}{(n-1)!}\sum_{\substack{i_1,\cdots i_{n-1}=1,\\ j_1,\cdots,j_{n-1}=1}}^n\delta_{j_1\cdots j_{n-1} j}^{i_1\cdots i_{n-1} i}u_{i_1j_1}\cdots u_{i_{n-1}j_{n-1}}.
\end{eqnarray*}
Here $\delta_{j_1\cdots j_n}^{i_1\cdots i_n}$ denotes the Kronecker symbols, i.e., it equals to $1$ ($-1$ reps.) if $(i_1\cdots i_n)$ is an even (odd reps.) permutation of  $(j_1\cdots j_n)$ and it equals to 0 in other cases.

Thus using the antisymmetry of the Kronecker symbols and \eqref{adeq0},  we obtain
\begin{eqnarray}\label{ad eq0}
&&\sum_{j=1}^n \left(\frac{\partial\sigma_n}{\partial u_{ij}}(u_{ij})\right)_j\\&=&\frac{1}{2(n-1)!}\sum_{\substack{i_1,\cdots i_{n-1}=1,\\ j,j_1,\cdots,j_{n-1}=1}}^n (n-1)\delta_{j_1\cdots  j_{n-1} j}^{i_1\cdots i_{n-1} i}(u_{i_1j_1j}-u_{i_1 jj_1})u_{i_2j_2}\cdots u_{i_{n-1}j_{n-1}}\nonumber\\
&=&\frac{1}{2(n-2)!}\sum_{\substack{i_1,\cdots i_{n-1}=1,\\ j,j_1,\cdots,j_{n-1}=1}}^n \delta_{j_1\cdots  j_{n-1} j}^{i_1\cdots i_{n-1} i}\left(\frac12 Q_{i_1 j m}u_{m j_1}-\frac12 Q_{i_1 j_1 m} u_{mj}\right)u_{i_2j_2}\cdots u_{i_{n-1}j_{n-1}}.\nonumber
\end{eqnarray}
We assume $u_{ij}$ is diagonal at some point $p_0$, i.e., $u_{ij}=\lambda_i \delta_{ij}$. We will use the notation $P(1,\cdots,n)$ to denote the permutation group of $\{1,\cdots,n\}$ and similarly, $P(1,\cdots,\hat{i},\cdots,n)$ the  permutation group of $\{1,\cdots,n\}$ without index $i$. 

Then at the point $p_0$, \eqref{ad eq0} reduces to
\begin{eqnarray}\label{ad eq4}
&&\sum_{j=1}^n \left(\frac{\partial\sigma_n}{\partial u_{ij}}(u_{ij})\right)_j\\&=&\frac{1}{2(n-2)!}\sum_{j,i_1,j_1=1}^{n}\delta_{j_1 j}^{i_1 i} \left(\frac12 Q_{i_1 j j_1}\lambda_{j_1}-\frac12 Q_{i_1 j_1 j} \lambda_{j}\right)\sum_{\substack{(i_2,\cdots, i_{n-1})\in\\ P(1,\cdots,\hat{i},\hat{i_1},\cdots,n)}}\lambda_{i_2}\cdots  \lambda_{i_{n-1}}\nonumber\\&=&\frac{1}{2(n-2)!} \sum_{j=i,j_1=i_1\neq i}^n\left(\frac12 Q_{i_1 i i_1}\lambda_{i_1}-\frac12 Q_{i_1 i_1 i} \lambda_{i}\right)\sum_{\substack{(i_2,\cdots, i_{n-1})\in\\ P(1,\cdots,\hat{i},\hat{i_1},\cdots,n)}} \lambda_{i_2}\cdots \lambda_{i_{n-1}}\nonumber\\&&
+\frac{1}{2(n-2)!} \sum_{j_1=i,j=i_1\neq i} (-1) \left(\frac12 Q_{i_1 i_1 i}\lambda_{i}-\frac12 Q_{i_1 i i_1} \lambda_{i_1}\right)\sum_{\substack{(i_2,\cdots, i_{n-1})\in\\ P(1,\cdots,\hat{i},\hat{i_1},\cdots,n)}}\lambda_{i_2}\cdots \lambda_{i_{n-1}}\nonumber\\
&=&\frac{1}{(n-2)!}\sum_{\substack{(i_2,\cdots, i_{n-1})\in\\ P(1,\cdots,\hat{i},\hat{i_1},\cdots,n)}}  \left(\frac12 Q_{i_1 i i_1} \lambda_{i_1}\cdots \lambda_{i_{n-1}}-\frac12  Q_{i_1 i_1 i} \lambda_{i}\lambda_{i_2}\cdots \lambda_{i_{n-1}}\right).\nonumber\end{eqnarray}
On the other hand,  at $p_0$, we have \begin{eqnarray}\label{ad eq5}
\frac{\partial\sigma_n}{\partial u_{ij}}(u_{ij})&=&\frac{1}{(n-1)!}\sum_{\substack{(i_1,\cdots, i_{n-1})\in \\P(1,\cdots,\hat{i},\cdots,n)}} \lambda_{i_1}\cdots \lambda_{i_{n-1}} \delta_{ij}.
\end{eqnarray}
Recall from Lemma \ref{volume} that $d\mu=\varphi dV_g$ and $ \varphi_i=\frac12 \sum_{k=1}^n Q_{ikk}\varphi$. By combining \eqref{ad eq4} and \eqref{ad eq5}, we obtain at $p_0$,
\begin{eqnarray}\label{ad eq1}
&&\left(\frac{\partial\sigma_n}{\partial u_{ij}}(u_{ij})\right)_j v_i\varphi+\frac{\partial\sigma_n}{\partial u_{ij}}(u_{ij})\left(v_i\varphi_j+\frac12 Q_{ijk}v_k\varphi\right)\\
&=&\frac{1}{(n-2)!}\sum_{i=1}^n \sum_{\substack{(i_1,\cdots, i_{n-1})\in \\P(1,\cdots,\hat{i},\cdots,n)}}  \left(\frac12 Q_{i_1 i i_1} \lambda_{i_1}\cdots \lambda_{i_{n-1}}-\frac12  Q_{i_1 i_1 i} \lambda_{i}\lambda_{i_2}\cdots \lambda_{i_{n-1}}\right) v_i\varphi\nonumber\\
&&+\frac{1}{(n-1)!}\sum_{i=1}^n \sum_{\substack{(i_1,\cdots, i_{n-1})\in \\P(1,\cdots,\hat{i},\cdots,n)}}\lambda_{i_1}\cdots \lambda_{i_{n-1}}\left(v_i (-\frac12 \sum_{k=1}^n Q_{ikk}\varphi)+\sum_{k=1}^n\frac12 Q_{iik}v_k\varphi\right).\nonumber
\end{eqnarray}
It is easy to see that the second term in the right hand side of \eqref{ad eq1} can be viewed as
\begin{eqnarray}\label{ad eq7}
&&\frac{1}{(n-1)!}\sum_{i=1}^n \sum_{k\neq i}\sum_{\substack{(i_1,\cdots, i_{n-1})\in \\P(1,\cdots,\hat{i},\cdots,n)}}\lambda_{i_1}\cdots \lambda_{i_{n-1}}\left(v_i (-\frac12 \sum_{k=1}^n Q_{ikk}\varphi)+\sum_{k=1}^n\frac12 Q_{iik}v_k\varphi\right).
\end{eqnarray}
A simple computation shows that
\begin{eqnarray}\label{ad eq2}
&&(n-1)\sum_{i=1}^n \sum_{\substack{(i_1,\cdots, i_{n-1})\in\\ P(1,\cdots,\hat{i},\cdots,n)}}  \frac12 Q_{i_1 i i_1} \lambda_{i_1}\cdots \lambda_{i_{n-1}}v_i\varphi\\&&= \sum_{i=1}^n \sum_{k\neq i}\sum_{\substack{(i_1,\cdots, i_{n-1})\in\\ P(1,\cdots,\hat{i},\cdots,n)}} \frac12 Q_{ikk}\lambda_{i_1}\cdots \lambda_{i_{n-1}}v_i  \varphi.\nonumber
\end{eqnarray}
\begin{eqnarray}\label{ad eq3}
&&(n-1)\sum_{i=1}^n \sum_{\substack{(i_1,\cdots, i_{n-1})\in\\ P(1,\cdots,\hat{i},\cdots,n)}}  \frac12  Q_{i_1 i_1 i} \lambda_{i}\lambda_{i_2}\cdots \lambda_{i_{n-1}} v_i\varphi\\&&= \sum_{i=1}^n \sum_{k\neq i}\sum_{\substack{(i_1,\cdots, i_{n-1})\in\\ P(1,\cdots,\hat{i},\cdots,n)}}\frac12 Q_{iik}\lambda_{i_1}\cdots \lambda_{i_{n-1}}v_k\varphi.\nonumber
\end{eqnarray}
Substituting \eqref{ad eq7}, \eqref{ad eq2} and \eqref{ad eq3} into \eqref{ad eq1}, we see that
\begin{eqnarray}\label{ad eq6}
\left(\frac{\partial\sigma_n}{\partial u_{ij}}(u_{ij})\right)_j v_i\varphi+\frac{\partial\sigma_n}{\partial u_{ij}}(u_{ij})\left(v_i\varphi_j+\frac12 Q_{ijk}v_k\varphi\right)=0.
\end{eqnarray}
Since at every point we can choose a local normal coordinate such that $u_{ij}$ is diagonal, \eqref{ad eq6} holds for any points in $\mathcal{W}$.

With the help of \eqref{ad eq6}, we are easy to achieve the lemma. Indeed, integrating by parts, we have
\begin{eqnarray*}
&&\int_\mathcal{W} w L_S(v)d\mu\\&=&\int_\mathcal{W} w \frac{\partial\sigma_n}{\partial u_{ij}}(u_{ij})(v_{ij}-\frac12 Q_{ijk}v_k+v\delta_{ij})\varphi dV_g\nonumber\\&=&-\int_\mathcal{W}  \frac{\partial\sigma_n}{\partial u_{ij}}(u_{ij})w_iv_j \varphi dV_g+\int_\mathcal{W}  \frac{\partial\sigma_n}{\partial u_{ij}}(u_{ij})\delta_{ij}wv\varphi dV_g\nonumber\\&&-\int_\mathcal{W} w\left\{ \left(\frac{\partial\sigma_n}{\partial u_{ij}}(u_{ij})\right)_j v_i\varphi+\frac{\partial\sigma_n}{\partial u_{ij}}(u_{ij})\left(v_i\varphi_j+\frac12 Q_{ijk}v_k\varphi\right) \right\}dV_g\nonumber\\
&=&-\int_\mathcal{W}  \frac{\partial\sigma_n}{\partial u_{ij}}(u_{ij})w_iv_j d\mu+\int_\mathcal{W}  \frac{\partial\sigma_n}{\partial u_{ij}}(u_{ij})\delta_{ij}wvd\mu,\nonumber
\end{eqnarray*}
which is symmetric in $w$ and $v$. We finish the proof of Lemma \ref{adjoint}.
\end{proof}

The following is an immediate consequence of Lemma \ref{adjoint}. It also generalizes the necessary condition \eqref{necc} to any functions $S\in C^2(\mathcal{W})$, not just the anisotropic support function for some convex hypersurface.
\begin{corollary}Let $S\in C^2(\mathcal{W})$. Let $E^\alpha$ denote the standard $\alpha$-th coordinate vector of $\mathbb{R}^{n+1}$. For the position vector $z\in \mathcal{W}\subset \mathbb{R}^{n+1}$, we have
\begin{eqnarray*}
\int_\mathcal{W}  G(z)(z,E^\alpha) \det(S_{ij}-\frac12Q_{ijk}S_k+S\delta_{ij})d\mu=0,  \forall \alpha=1,\cdots,n+1.
\end{eqnarray*}\end{corollary}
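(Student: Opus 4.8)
The plan is to derive the identity directly from the self-adjointness established in Lemma \ref{adjoint}, combined with Euler's homogeneity relation for the determinant. Write $u_{ij} = S_{ij} - \frac12 Q_{ijk}S_k + S\delta_{ij}$ and $v_\alpha(z) = G(z)(z,E^\alpha)$. The first ingredient is already recorded in \eqref{kernel}: using the Gauss formula \eqref{Gauss} one has $z_{ij} = -z\delta_{ij} - \frac12 Q_{ijk}z_k$ on $\mathcal{W}$, hence $(v_\alpha)_{ij} - \frac12 Q_{ijk}(v_\alpha)_k + v_\alpha\delta_{ij} = 0$ pointwise; consequently $L_S(v_\alpha) = 0$ for \emph{every} $S\in C^2(\mathcal{W})$, since the bracket on which $\partial\sigma_n/\partial u_{ij}$ acts vanishes identically, regardless of the coefficient matrix.

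The second ingredient is that $\sigma_n$ is homogeneous of degree $n$ in the entries of the matrix $u_{ij}$, so Euler's identity gives $\frac{\partial\sigma_n}{\partial u_{ij}}(u_{ij})\,u_{ij} = n\,\sigma_n(u_{ij}) = n\det(u_{ij})$. Observing that the matrix $u_{ij}$ is precisely $S_{ij}-\frac12 Q_{ijk}S_k + S\delta_{ij}$, i.e.\ the expression $v_{ij}-\frac12 Q_{ijk}v_k+v\delta_{ij}$ evaluated at $v=S$, this says exactly that $L_S(S) = n\det(u_{ij})$.

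Combining the two, I would apply Lemma \ref{adjoint} with $w = v_\alpha$ and $v = S$, obtaining
\[
n\int_\mathcal{W} G(z)(z,E^\alpha)\det\!\left(S_{ij}-\tfrac12 Q_{ijk}S_k+S\delta_{ij}\right) d\mu
= \int_\mathcal{W} v_\alpha\, L_S(S)\, d\mu
= \int_\mathcal{W} S\, L_S(v_\alpha)\, d\mu = 0 ,
\]
and dividing by $n$ yields the corollary. There is essentially no obstacle here; the only points to state carefully are that Lemma \ref{adjoint} is valid for arbitrary $S,v,w\in C^2(\mathcal{W})$, so no admissibility (and in particular no positive definiteness of $u_{ij}$) is required, and that the vanishing $L_S(v_\alpha)=0$ from \eqref{kernel} holds independently of $S$. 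As a consistency check, when $S$ is the anisotropic support function of a strongly convex hypersurface this recovers exactly the necessary condition \eqref{necc}.
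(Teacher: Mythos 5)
Your proof is correct and is essentially identical to the paper's: both invoke $L_S(G(z)(z,E^\alpha))=0$ from \eqref{kernel}, the Euler identity $L_S(S)=n\det(u_{ij})$, and the self-adjointness of $L_S$ from Lemma \ref{adjoint} to move $L_S$ onto $G(z)(z,E^\alpha)$. Nothing is missing.
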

\begin{proof}
Recall the equation \eqref{kernel},
\begin{eqnarray*}
L_S(G(z)(z,E^\alpha))=0.
\end{eqnarray*}
Then it follows from Proposition \ref{adjoint} that
\begin{eqnarray*}
&&\int_\mathcal{W}  G(z)(z,E^\alpha) \det(S_{ij}-\frac12Q_{ijk}S_k+S\delta_{ij})d\mu\\
&=&\frac1n \int_\mathcal{W}  G(z)(z,E^\alpha) L_S(S)d\mu=\frac1n \int_\mathcal{W}  L_S(G(z)(z,E^\alpha))  Sd\mu=0.
\end{eqnarray*}
\end{proof}

We observe from \eqref{kernel} that the function space $\hbox{span}\{G(z)(z,E^1),\cdots,G(z)(z,E^{n+1})\}$ lies in the kernel of $L_S$. Next we show that the kernel of $L_S$ contains only  the functions in $\hbox{span}\{G(z)(z,E^1),\cdots,G(z)(z,E^{n+1})\}$.
\begin{lemma}\label{ker}
Let $v\in C^2(\mathcal{W})$ be a function such that $L_S(v)=0$. Then, $v= \sum_{\alpha=1}^{n+1} a_\alpha G(z)(z,E^\alpha)$ for some constants $a_1,\cdots,a_{n+1}$.
\end{lemma}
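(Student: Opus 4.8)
I want to show that the kernel of $L_S$ is exactly the $(n+1)$-dimensional space $\hbox{span}\{G(z)(z,E^1),\cdots,G(z)(z,E^{n+1})\}$, the reverse inclusion to \eqref{kernel} already being established. The natural strategy is to reinterpret a kernel element $v$ geometrically: since $S$ is an admissible solution, by Proposition \ref{P2} it is the anisotropic support function of a $C^2$ strongly convex hypersurface $M$. The equation $L_S(v)=0$ says precisely that $v$ is, to first order, an infinitesimal deformation of $S$ preserving the anisotropic Gauss-Kronecker curvature; I expect such deformations to be generated exactly by translations of $M$, whose support functions change by $z\mapsto G(z)(z,a)$ for $a\in\mathbb{R}^{n+1}$. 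So the content is a uniqueness/rigidity statement for the linearized Monge-Amp\`ere operator on $\mathcal{W}$.

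The analytic route I would carry out: first, use that at an admissible solution the matrix $\mathcal{F}^{ij}=\frac{\partial\sigma_n}{\partial u_{ij}}(u_{ij})$ is positive definite, so $L_S$ is a (linear, second-order) uniformly elliptic operator on the closed manifold $\mathcal{W}$. Second, I would recast $L_S$ in divergence form with respect to $d\mu$, exploiting identity \eqref{ad eq6}: indeed \eqref{ad eq6} shows that $L_S(v)\,d\mu = \partial_j\!\left(\mathcal{F}^{ij} v_i \varphi\right) dV_g - \mathcal{F}^{ij}\delta_{ij} v\,\varphi\,dV_g$ in a suitable sense, i.e.\ $L_S$ is, up to the zeroth-order term $-(\sum_i\mathcal{F}^{ii})v$, a self-adjoint divergence-form operator with no first-order drift after integration against $d\mu$. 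Then, for a kernel element $v$, multiply $L_S(v)=0$ by $v$ and integrate with $d\mu$; by Lemma \ref{adjoint}'s computation this gives
\begin{eqnarray*}
0=\int_\mathcal{W} v\,L_S(v)\,d\mu = -\int_\mathcal{W}\mathcal{F}^{ij} v_i v_j\,d\mu + \int_\mathcal{W}\Big(\textstyle\sum_i\mathcal{F}^{ii}\Big) v^2\,d\mu.
\end{eqnarray*}
This is the wrong sign to conclude immediately (the zeroth-order term is positive), so a bare energy estimate does not suffice; one needs the finer structure.

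The decisive step, and the one I expect to be the main obstacle, is to extract rigidity despite the "bad-signed" lower-order term. The classical device (Cheng-Yau, Hilbert) is to use that $\mathcal{F}^{ij}$ is the cofactor matrix of $(u_{ij})$, so $\mathcal{F}^{ij}$ is divergence-free in the Euclidean model and one gets a Minkowski-type second mixed-volume inequality forcing $v$ into the kernel; here, as the authors stress, $\mathcal{F}^{ij}$ is \emph{not} divergence-free, but \eqref{ad eq6} supplies the correct substitute after pairing with $\varphi$. Concretely I would consider, for a kernel element $v$, the symmetric bilinear form $Q(v,w)=\int_\mathcal{W} w\,L_S(v)\,d\mu$ and show, using the admissibility (positivity) of $u_{ij}$ together with the fact that $L_S(S)=n\sigma_n(u_{ij})>0$, that $L_S$ has a one-dimensional negative/zero structure: $S$ itself is a positive eigenfunction-like object, so by a Barta/maximum-principle argument any $v$ with $L_S(v)=0$ must, after subtracting a suitable multiple of the known kernel elements, satisfy $v\le 0$ and $v\ge 0$ simultaneously, hence vanish. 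Equivalently, one normalizes $v$ by imposing the orthogonality \eqref{ortho}-type condition $\int_\mathcal{W} G(z)(z,E^\alpha)v\,d\mu=0$ for all $\alpha$ (possible since those functions span the already-known kernel and $L_S$ is self-adjoint), and then shows the restricted quadratic form is \emph{positive definite}, which forces $v\equiv 0$; unwinding the normalization yields $v\in\hbox{span}\{G(z)(z,E^\alpha)\}$. Establishing that positive-definiteness — essentially an anisotropic analogue of the fact that the second variation of the Wulff functional is nonnegative with kernel the translations — is where the real work lies, and it is exactly the place where the non-vanishing of $Q_{ijk}$ must be handled by hand rather than quoted from the Euclidean theory.
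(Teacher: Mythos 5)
Your proposal correctly identifies what must be shown (the reverse inclusion $\ker L_S\subset\hbox{span}\{G(z)(z,E^\alpha)\}$) and correctly diagnoses that the naive energy identity
$0=\int_\mathcal{W} v\,L_S(v)\,d\mu=-\int_\mathcal{W}\mathcal{F}^{ij}v_iv_j\,d\mu+\int_\mathcal{W}(\sum_i\mathcal{F}^{ii})v^2\,d\mu$
is inconclusive. But the step you yourself flag as ``where the real work lies'' is exactly the content of the lemma, and neither of the two routes you sketch for it would succeed as stated. The claim that the quadratic form $Q(v,v)=\int_\mathcal{W} vL_S(v)\,d\mu$ is \emph{positive definite} on the $d\mu$-orthogonal complement of $\hbox{span}\{G(z)(z,E^\alpha)\}$ is false already in the isotropic model case $\mathcal{W}=\mathbb{S}^n$, $S\equiv 1$, where $L_S=\Delta+n$ and $Q(v,v)=-\int|\nabla v|^2+n\int v^2$: this is positive on constants but negative on all spherical harmonics of degree $\ge 2$, hence indefinite on the orthogonal complement of the first harmonics. (Indefiniteness is fatal for your argument; moreover, since $v\in\ker L_S$ gives $Q(v,w)=0$ for all $w$, characterizing the radical of $Q$ is circular without additional input.) The Barta/maximum-principle variant also does not apply: the zeroth-order coefficient of $L_S$ is $\sum_i\mathcal{F}^{ii}>0$, and the positive function $S$ satisfies $L_S(S)=n\sigma_n(u_{ij})>0$ rather than $L_S(S)\le 0$, which is the wrong inequality for producing a generalized maximum principle.

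The paper closes this gap by a quite different, Cheng--Yau-style integral-geometric argument. One introduces the $\mathbb{R}^{n+1}$-valued maps $Z=\sum_i v_ie_i+ve_{n+1}$ and $X=\sum_i S_ie_i+Se_{n+1}$ in the adapted frame with $e_{n+1}=z$, computes $dZ=[(v_{ij}-\tfrac12Q_{ijk}v_k+\delta_{ij}v)e_i]\,\omega^j$ and $dX=[u_{ij}e_i]\,\omega^j$, and observes that $L_S(v)=0$ means precisely $dX\wedge Z\wedge dZ\wedge dX\wedge\cdots\wedge dX=0$. Stokes' theorem applied to $\bar\Omega=X\wedge Z\wedge dZ\wedge dX\wedge\cdots\wedge dX$ then gives $\int_\mathcal{W}X\wedge dZ\wedge dZ\wedge dX\wedge\cdots\wedge dX=0$; this integrand is $S$ times the mixed discriminant $\sigma_n(U,\dots,U,V,V)$ with $V=(v_{ij}-\tfrac12Q_{ijk}v_k+\delta_{ij}v)$, which by G\aa rding's theory of hyperbolic polynomials has a fixed sign pointwise once $\sigma_n(U,\dots,U,V)=0$ and $U>0$, with equality only for $V=0$. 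Since $S>0$, the vanishing of the integral forces $V\equiv 0$, hence $dZ=0$, $Z$ constant, and $v=G(z)(z,Z)$. This pointwise mixed-discriminant inequality is the substitute for the definiteness you were hoping for, and it is precisely the ingredient missing from your proposal.
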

\begin{proof}
We follow the idea of Cheng-Yau's proof. Let $e_{n+1}=z$ be the position vector of $\mathcal{W}$ and $\{e_1,\cdots,e_n\}$ is a local orthonormal frame field with respect to $g$ on $\mathcal{W}$ such that  $\{e_1,\cdots,e_{n+1}\}$ is a positive oriented  orthonormal frame field with respect to $G$ in $\mathbb{R}^{n+1}$. Let $\{\omega^1,\cdots,\omega^{n+1}\}$ be the dual $1$-form of $\{e_1,\cdots,e_{n+1}\}$, i.e., $\omega^\alpha(e_\beta)=\delta_{\alpha\beta}$. 
Clearly we have
\begin{eqnarray*}
\omega^{n+1}|_\mathcal{W}=0 , \quad e_i(e_{n+1})=e_i,
\end{eqnarray*}
\begin{eqnarray*}
e_i(e_j)=\nabla_{e_i}e_j-\frac12 Q_{ijk}e_k-\delta_{ij}e_{n+1}.
\end{eqnarray*}

Consider the vector valued function $Z=\sum_{i=1}^n v_ie_i+ve_{n+1}$. Then $v=G(z)(z,Z)$ and on $\mathcal{W}$ ( we compute at normal coordinates, namely, $\nabla_{e_i} e_j=0$),
\begin{eqnarray}\label{ker eq1}
dZ&=&e_j(Z)\omega^j=\left[e_je_i( v) e_i+v_i e_j(e_i)+e_j( v)e_{n+1}+ve_j(e_{n+1})\right]\omega^j\\
&=&\left[v_{ij}e_i+v_i(-\frac12 Q_{ijk}e_k-\delta_{ij}e_{n+1})+v_j e_{n+1}+ve_j\right]\omega^j\nonumber\\
&=&\left[(v_{ij}-\frac12 Q_{ijk}v_k+\delta_{ij} v)e_i\right]\omega^j.\nonumber
\end{eqnarray}
Let $X=\sum_{i=1}^n S_ie_i+Se_{n+1}$, then the same computation as \eqref{ker eq1} gives
\begin{eqnarray*}
dX=\left[(S_{ij}-\frac12 Q_{ijk}S_k+\delta_{ij} S)e_i\right]\omega^j.
\end{eqnarray*}

Consider the $(n-1)$-form $\bar{\Omega}=X\wedge Z\wedge dZ\wedge dX\wedge\cdots\wedge dX$, where $dX$ appears $(n-2)$ times.
Since $L_S(v)=0$, we see that
\begin{eqnarray*}
&& dX\wedge Z\wedge dZ\wedge dX\wedge\cdots\wedge dX\\
&=& \left[\frac{\partial \sigma_n}{u_{ij}}(u_{ij})(v_{ij}-\frac12 Q_{ijk}v_k+\delta_{ij} v)\right](Z\wedge e_1\wedge\cdots\wedge e_n)\otimes (\omega^1\wedge \cdots \wedge\omega^n)=0. \nonumber
\end{eqnarray*}
Hence we have
\begin{eqnarray*}
0=\int_\mathcal{W} d\bar{\Omega}=\int_{\mathcal{W}}  X\wedge dZ\wedge dZ\wedge dX\wedge\cdots\wedge dX.
\end{eqnarray*}
The same argument as in \cite{CY}, Page 507, leads to the conclusion that 
\begin{eqnarray*}
v_{ij}-\frac12 Q_{ijk}v_k+\delta_{ij} v=0 \quad\forall i,j=1,\cdots,n.
\end{eqnarray*}
Thus $Z$ is constant due to \eqref{ker eq1} and can be written as $Z= a_\alpha E^\alpha$ for some constants $a_\alpha$. Consequently,\begin{eqnarray*}
v=G(z)(z,Z)=a_\alpha G(z)(z,E^\alpha).
\end{eqnarray*}

\end{proof}

Now we are ready to prove Proposition \ref{open} and then Theorem \ref{thm1} and \ref{thm}.

\

\noindent{\it Proof of Proposition \ref{open}:} Without loss of generality, we assume that  $S$ satisfies \eqref{ortho}. By virtue of Proposition \ref{a pri}
we may further assume that $K\in C^\infty(\mathcal{W})$.  Let $ H^m(\mathcal{W})$ be the Sobolev space of $\mathcal{W}$ with the Riemannian metric $g$. Choose $m$ sufficient large such that $H^m(\mathcal{W})\subset C^4(\mathcal{W})$.
Consider $L_S$ as a bounded linear map from $H^{m+2}(\mathcal{W})$ to $H^{m}(\mathcal{W})$. It follows from Lemma \ref{ker} that $\hbox{Ker}(L_S)=\hbox{span}\{G(z)(z,E^1),\cdots,G(z)(z,E^{n+1})\}$. On the other hand, $L_S$ is self-adjoint due to Lemma \ref{adjoint}. Hence by the standard Hilbert space theory, we have $$\hbox{Image}(L_S)=\hbox{Ker}(L_S^*)^\perp=\hbox{span}\{G(z)(z,E^1),\cdots,G(z)(z,E^{n+1})\}^\perp.$$
Consequently, for any $f\in H^m(\mathcal{W})$ with $\int G(z)(z,E^\alpha)f(z)d\mu=0,\quad\forall \alpha=1,\cdots,n+1$, we have $f\in \hbox{Image}(L_S)$, which means $L_S:H^{m+2}(\mathcal{W})\to H^{m}(\mathcal{W})$ is surjective. The standard implicit function theorem yields that the operator $S\mapsto \det(S_{ij}-\frac12 Q_{ijk}S_k+S\delta_{ij})$ is locally invertible near $S$, which implies the set $\mathcal{S}$ is open.\qed

\

\noindent{\it Proof of Theorem \ref{thm1}:} We see from Theorem \ref{a pri} that $\mathcal{S}$ is closed. Since $\mathcal{S}$ is also open and non-empty, we conclude that $\mathcal{S}=[0,1].$ In particular, \eqref{eq} has an admissible solution on $\mathcal{W}$.

We now turn to the uniqueness part. Assume $S$ and $\tilde{S}$ are two solutions to \eqref{eq}. Denote  by $ U=(u_{ij})=(S_{ij}-\frac12 Q_{ijk}S_k+S\delta_{ij})$ and $\tilde{U}=(\tilde{u}_{ij})=(\tilde{S}_{ij}-\frac12 Q_{ijk}\tilde{S}_k+\tilde{S}\delta_{ij})$.

For any $n\times n$ symmetric matrices $W_1,\cdots,W_n$. Let $\sigma_n(W_1,\cdots,W_n)$ denote the complete polarization of $\sigma_n$, i.e., \begin{eqnarray*}
\sigma_n(W_1,\cdots,W_n)=\frac{1}{n!}\frac{\partial^n}{\partial \lambda_1\cdots\partial \lambda_n}\big|_{\lambda_1=\cdots=\lambda_n=0}\left(\sigma_n(\lambda_1 W_1+\cdots+\lambda_n W_n)\right).
\end{eqnarray*}
Clearly, $$\frac{\partial \sigma_n}{\partial u_{ij}}(u_{ij})\tilde{u}_{ij}=n\sigma_n(\underbrace{U,\cdots,U,}_{(n-1)\hbox{ times }}\tilde{U}).$$

It follows from Lemma \ref{adjoint} that
\begin{eqnarray}\label{unieq1}
&&\int_\mathcal{W} S\sigma_n(\tilde{U})d\mu=\int_\mathcal{W} S \cdot\frac1n L_{\tilde{S}}(\tilde{S})d\mu\\&=&\int_\mathcal{W} \frac1n L_{\tilde{S}}(S)\tilde{S}d\mu=\int_\mathcal{W} \tilde{S}\sigma_n(\tilde{U},\cdots,\tilde{U}, U)d\mu.\nonumber
\end{eqnarray}
In the same way, we have
\begin{eqnarray}\label{unieq2}
\int_\mathcal{W} \tilde{S}\sigma_n(U)d\mu=\int_\mathcal{W} S\sigma_n(U,\cdots,U,\tilde{U})d\mu.
\end{eqnarray}
Combining \eqref{unieq1} and \eqref{unieq2}, we obtain
\begin{eqnarray}\label{unieq3}
&&2\int_\mathcal{W} S\left(\sigma_n(\tilde{U})-\sigma_n(U,\cdots,U,\tilde{U})\right)d\mu\\&=&\int_\mathcal{W} \left[\tilde{S}\left(\sigma_n(U,\tilde{U},\cdots,\tilde{U})-\sigma_n(U)\right)-S\left(\sigma_n(U,\cdots,U,\tilde{U})-\sigma_n(\tilde{U})\right)\right]d\mu.\nonumber
\end{eqnarray}
Recall the G\aa rding inequality for the polarizations of $\sigma_n$ (see G\aa rding \cite{Ga}),
\begin{eqnarray}\label{unieq4}
\sigma_n(U,\cdots,U,\tilde{U})\geq \sigma_n^{\frac1n}(\tilde{U})\sigma_n^{\frac{n-1}{n}}(U),\end{eqnarray}
with the equality holds if and only if $U$ and $\tilde{U}$ are proportional.
In view of the assumption that $\sigma_n(U)=\sigma_n(\tilde{U})$, we see from \eqref{unieq4} that the left hand side of \eqref{unieq3} is non-positive, whence the right hand side is also non-positive. However, the right hand side of \eqref{unieq3} is anti-symmetric with respect to $S$ and $\tilde{S}$, we conclude that it vanishes. This implies the equality holds in \eqref{unieq4}, and in turn, $U$ and $\tilde{U}$ are proportional. Since $\sigma_n(U)=\sigma_n(\tilde{U})$, we obtain that $U=\tilde{U}$ for every point in $\mathcal{W}$.
In particular,
\begin{eqnarray*}
L_S (S-\tilde{S})=n(\sigma_n(U)-\sigma_n(U,\cdots,U,\tilde{U}))=0.
\end{eqnarray*}
By Lemma \ref{ker}, we conclude that $S-\tilde{S}= c_\alpha G(z)(z, E^\alpha).$ The proof is completed.\qed

\

\noindent{\it Proof of Theorem \ref{thm}:} The existence part follows directly from Theorem \ref{thm1} and Propostion \ref{P1} and \ref{P2}. Notice that for two hypersurface $M$ and $\tilde{M}$ with the same anisotropic Gauss-Kronecker curvature,
$$G(z)(z,M(z)-\tilde{M}(z))=S(M(z))-\tilde{S}(M(z))=c_\alpha G(z)(z, E^\alpha).$$
Therefore, $M(z)-\tilde{M}(z)=c_\alpha E^\alpha$, which is a constant vector in $\mathbb{R}^{n+1},$ namely, $M$ and $\tilde{M}$ coincide up to a translation.\qed

 \
 
  \noindent\textbf{Acknowledgement.} I would like to express my gratitude to my advisor, Prof. Guofang Wang for introducing me the topics of anisotropy and  for helpful discussions and suggestions on this paper. 
  
\

\end{document}